\let\oldmarginpar\marginpar
\renewcommand\marginpar[1]{\-\oldmarginpar[\raggedleft\footnotesize #1]%
{\raggedright\footnotesize #1}}
\theoremstyle{plain}
\newtheorem{thm}{Theorem}[section]
\newtheorem{cor}[thm]{Corollary}
\newtheorem{prop}[thm]{Proposition}
\newtheorem{lemma}[thm]{Lemma}
\theoremstyle{definition}
\newtheorem{defn}[thm]{Definition}
\DeclareMathOperator{\Height}{Height}
\DeclareMathOperator{\Aut}{Aut}
 \DeclareMathOperator{\Ad}{Ad}
\DeclareMathOperator{\ad}{ad}
\DeclareMathOperator{\GCD}{gcd}
\DeclareMathOperator{\lcm}{lcm}
\DeclareMathOperator{\D}{D}
\DeclareMathOperator{\Farb}{F}
\DeclareMathOperator{\Conj}{Conj}
\DeclareMathOperator{\Log}{Log}
\DeclareMathOperator{\Free}{N}
\DeclareMathOperator{\Bas}{B}
\DeclareMathOperator{\Depth}{D}
\DeclareMathOperator{\Heis}{H}
\DeclareMathOperator{\CDepth}{CD}
\newcommand{\eps}{\varepsilon}
\newcommand{\bdef}{\overset{\text{def}}{=}}
\newcommand{\al}{\alpha}
\newcommand{\be}{\beta}
\newcommand{\ga}{\gamma}
\newcommand{\Ga}{\Gamma}
\newcommand{\la}{\lambda}
\newcommand{\La}{\Lambda}
\newcommand{\De}{\Delta}
\newcommand{\nid}{\noindent}
\newcommand{\innp}[1]{\left< #1 \right>}
\newcommand{\set}[1]{\left\{#1\right\}}
\newcommand{\pr}[1]{\left( #1 \right) }
\newcommand{\Fr}[1]{\ensuremath{\mathfrak{#1}}}
\newcommand{\N}{\ensuremath{\mathbb{N}}}
\newcommand{\Q}{\ensuremath{\mathbb{Q}}}
\newcommand{\R}{\ensuremath{\mathbb{R}}}
\newcommand{\Z}{\ensuremath{\mathbb{Z}}}
\newcommand{\map}[3]{#1 : #2 \rightarrow #3}
\newcommand{\modulo}[2]{#1 \: \left( \: \text{mod } \: #2 \right)}
\newcommand{\nsub}{\trianglelefteq}
\begin{document}


\title{\textbf{Effective Separability of \\ Finitely Generated Nilpotent Groups}}
\author{Mark Pengitore\thanks{Purdue University, West Lafayette, IN. E-mail: \tt{mpengito@purdue.edu}}} 
\maketitle


\begin{abstract}
\nid We give effective proofs of residual finiteness and conjugacy separability for finitely generated nilpotent groups. In particular, we give precise asymptotic bounds for a function introduced by Bou-Rabee that measures how large the quotients that are needed to separate non-identity elements of bounded length from the identity which improves the work of Bou-Rabee. Similarly, we give polynomial upper and lower bounds for an analogous function introduced by Lawton, Louder, and McReynolds that measures how large the quotients that are needed to separate pairs of distinct conjugacy classes of bounded word length using work of Blackburn and Mal'tsev.
\end{abstract}

\section{Introduction}

We say that $\Gamma$ is \emph{residually finite} if for each $\gamma \in \Gamma - \set{1}$ there exists a surjective homomorphism to a finite group $\map{\varphi}{\Gamma}{Q}$ such that $\varphi(\gamma) \neq 1$. Mal'tsev \cite{residual_linear} proved that if $\Gamma$ is a residually finite finitely presentable group, then there exists a solution to the word problem of $\Ga$. We say that $\Gamma$ is \emph{conjugacy separable} if for each non-conjugate pair $\gamma, \eta$ in $\Gamma$ there exists a surjective homomorphism to a finite group $\map{\varphi}{\Gamma}{Q}$ such that $\varphi(\gamma)$ and $\varphi(\eta)$ are not conjugate. Mal'tsev \cite{residual_linear} also proved that if $\Ga$ is a conjugacy separable finitely presentable group, then there exists a solution to the conjugacy problem of $\Ga$

Residual finiteness, conjugacy separability, subgroup separability, and other residual properties have been extensively studied and used to great effect in resolving important conjectures in geometry, such as the work of Agol on the Virtual Haken conjecture.  Much of the work in the literature has been to understand which groups satisfy various residual properties. For example, free groups, polycyclic groups, surface groups, and fundamental groups of compact, orientable $3$-manifolds have all been shown to be residually finite and conjugacy separable \cite{Blackburn,Formanek,conj_3_manifolds,hempel,remeslennikov,stebe}. Recently, there have been several papers that have made effective these separability properties for certain classes of groups. The main purpose of this article is to improve on the effective residual finiteness results of \cite{BouRabee10} and establish effective conjugacy separability results, both for the class of finitely generated nilpotent groups.

\subsection{Residual Finiteness}
For a finitely generated group $\Gamma$ with a finite generating subset $S$, \cite{BouRabee10} (see also \cite{Rivin}) introduced a function $\Farb_{\Gamma,S} (n)$ on the natural numbers that quantifies residual finiteness. Specifically, the value of $\Farb_{\Ga,S}
(n)$ is the maximum order of a finite group needed to distinguish a non-identity element from the identity as one varies over non-identity elements in the $n$-ball. Numerous authors have studied the asymptotic behavior of $\Farb_{\Gamma,S}(n)$ for a wide collection of groups $\Gamma$ (see \cite{BouRabee11,BouRabee10,BK,brhp,buskin,KM,Patel,Rivin}). 

To state our results, we require some notation. For two non-decreasing functions $\map{f,g}{\mathbb{N}}{\mathbb{N}}$, we write $f \preceq g$ if there exists a $C \in \N$ such that $f (n) \leq C g (C n)$ for all $n \in \N$.  We write $f \approx g$ when $f \preceq g$ and $g \preceq f$. For a finitely generated nilpotent group $\Ga$, we denote $T(\Ga)$ to be the normal subgroup of finite order elements. As we will see in Subsection \ref{word_defn}, the dependence of $\Farb_{\Ga,S}(n)$ is mild; subsequently, we will suppress the dependence of $\Farb_{\Ga}$ on the generating subset in this subsection.

For finitely generated nilpotent groups, Bou-Rabee \cite[Thm 0.2]{BouRabee10} proved that $\Farb_{\Ga} (n) \preceq (\log (n))^{h (\Gamma)}$ where $h (\Gamma)$ is the Hirsch length of $\Gamma$. Our first result establishes the precise asymptotic behavior of $\Farb_{\Gamma} (n)$.

\begin{thm}\label{word_precise} Let $\Ga$ be an infinite, finitely generated nilpotent group. There exists a $\psi_{\text{RF}}(\Ga) \in \N$ such that $\Farb_{\Ga}(n) \approx \pr{\log(n)}^{\psi_{\text{RF}}(\Ga)}$. Additionally, one can compute $\psi_{\text{RF}}(\Ga)$ given a basis for $(\Ga / T(\Ga))_c$ where $c$ is the step length of $\Ga / T(\Ga)$.
\end{thm}	
	
The proof of Theorem \ref{word_precise} is done in two steps. To establish the upper bound, we appeal to some structural properties of finitely generated nilpotent groups. To establish the lower bound, we construct a sequence $\set{\gamma_i} \subseteq \Gamma$ such that the order of the minimal finite group that separates $\gamma_i$ from the identity is bounded below by $C (\log (C  \|\gamma_i\|_S))^{\psi_{\text{RF}}(\Ga)}$ for some $C \in \N$.	
	
The following is a consequence of the proof of Theorem \ref{word_precise}.
\begin{cor}\label{cong_rf}
	Let $\Ga$ be a finitely generated nilpotent group. Then $\Farb_{\Ga}(n) \approx \pr{\log(n)}^{h(\Ga)}$ if and only if $h(Z(\Ga / T(\Ga))) =1$.
\end{cor}	
	
We now introduce some terminology. Suppose that $G$ is a connected, simply connected nilpotent Lie group with Lie algebra $\Fr{g}$. We say that $G$ is \emph{$\Q$-defined} if  $\Fr{g}$ admits a basis with rational structure constants. The \emph{Mal'tsev} completion of a torsion free, finitely generated nilpotent group $\Ga$ is a connected, simply connected, $\Q$-defined nilpotent Lie group $G$ such that $\Ga$ embeds into as a cocompact lattice.

The next theorem demonstrates that the asymptotic behavior of $\Farb_{\Ga}(n)$ is an invariant of the Mal'tsev completion of $\Ga / T(\Ga)$.
 
\begin{thm}\label{maltsev_invariant}
	Suppose that $\Ga_1$ and $\Ga_2$ are two infinite, finitely generated nilpotent groups such that  $\Ga_1 / T(\Ga_1)$ and $\Ga_2 / T(\Ga_2)$ have isomorphic Mal'tsev completions. Then $\Farb_{\Ga_1}(n) \approx \Farb_{\Ga_2}(n)$.
\end{thm}

The proof of theorem \ref{maltsev_invariant} follows by an examination of a cyclic series that comes from a refinement of the upper central series and its interaction with the topology of the Mal'tsev completion.

Since the $3$-dimensional integral Heisenberg group embeds into every infinite, non-abelian nilpotent group, Theorem \ref{word_precise}, Theorem \ref{maltsev_invariant}, \cite[Thm 2.2]{BouRabee10}, and \cite[Cor 2.3]{BouRabee10} allow us to characterize $\R^d$ within the collection of connected, simply connected $\Q$-defined nilpotent Lie groups by the asymptotic behavior of residual finiteness of a cocompact lattice.
\begin{cor}\label{abelian_rf}
	Let $G$ be a connected, simply connected, $\Q$-defined nilpotent Lie group. Then $G$ is Lie isomorphic to $\R^{\text{dim}(G)}$ if and only if $\Farb_{\Ga}(n) \precnsim (\log (n))^3$ where $\Ga \subseteq G$ is any cocompact lattice.
\end{cor}

For the last result of this section, we need some terminology. We say that a group $\Ga$ is \emph{irreducible} if there is no non-trivial splitting of $\Ga$ as a direct product. For a function of the form $f(n) = (\log(n))^m$, we call $m$ the \emph{polynomial in logarithm degree of growth} for $f(n)$.

\begin{thm}\label{applications 2} \text{ } \begin{itemize}
\item[(i)] For $c \in \N$, there exists $m(c) \in \N$ satisfying the following. For each $\ell \in \N$ there exists an irreducible, torsion free, finitely generated nilpotent group $\Ga$ of step length $c$ and $h(\Ga) \geq \ell$ such that $\Farb_{\Ga}(n) \preceq (\log (n))^{m(c)}$. 

\item[(ii)] Every natural number not equal to $2$ can be realized as the polynomial in logarithm degree of growth for $\Farb_{\Ga}(n)$ where $\Ga$ is an irreducible, torsion free, finitely generated nilpotent group. 
\item[(iii)]
Suppose $2 \leq c_1 < c_2$ are natural numbers. For each $\ell \in \N$, there exist irreducible, torsion free, finitely generated nilpotent groups $\Ga_\ell$ and $\De_\ell$ of step lengths $c_1$ and $c_2$, respectively, such that $\Farb_{\Ga_\ell}(n) \approx \Farb_{\De_\ell}(n)$.

\item[(iv)] For natural numbers $c > 1$ and $m \geq 1$, there exists an irreducible, torsion free, finitely generated nilpotent group $\Ga$ of step length $c$ such that $(\log(n))^m \preceq \Farb_{\Ga}(n)$.
\end{itemize}
\end{thm}

For Theorem \ref{applications 2}(i), we consider free nilpotent groups of a fixed step length and increasing rank. We make use of central products of filiform nilpotent groups for Theorem \ref{applications 2}(ii) - (iv).

Using Theorem \ref{applications 2}, we are able to relate the polynomial in logarithm degree of growth of the residual finiteness function with well known invariants of the class of finitely generated nilpotent groups. Theorem \ref{applications 2}(i) implies the polynomial in logarithm degree of growth of $\Farb_{\Ga}(n)$ does not depend on the Hirsch length of $\Ga$. Similarly, Theorem \ref{applications 2}(iv) implies there is no upper bound in terms of step length of $\Ga$ for the polynomial in logarithm degree of growth of $\Farb_{\Ga}(n)$. On the other hand, the step size of $\Ga$ is not determined by the polynomial in logarithm growth of $\Farb_{\Ga}(n)$ as seen in Theorem \ref{applications 2}(iii). 

\subsection{Conjugacy Separability}
We now turn our attention to effective conjugacy separability. Lawton--Louder--McReynolds \cite{LLM} introduced a function $\Conj_{\Gamma,S}(n)$ on the natural numbers that quantifies conjugacy separability. To be precise, the value of $\Conj_{\Ga,S}(n)$ is the maximum order of the minimal finite quotient needed to separate a pair of non-conjugate elements as one varies over non-conjugate pairs of elements in the $n$-ball. Since the dependence of $\Conj_{\Ga,S}(n)$ on $S$ is mild (see Lemma \ref{conj_gen_set}), we will suppress the generating subset throughout this subsection.

To the author's knowledge, the only previous work on the asymptotic behavior of $\Conj_{\Ga}(n)$ is due to Lawton--Louder--McReynolds \cite{LLM}. They demonstrate that if $\Ga$ is a surface group or a finite rank free group, then $\Conj_{\Ga}(n) \preceq n^{n^2}$ \cite[Cor 1.7]{LLM}. In this subsection, we initiate the study of the asymptotic behavior of $\Conj_{\Ga}(n)$ for the collection of finitely generated nilpotent groups.

Our first result is the precise asymptotic behavior of $\Conj_{\Heis_{2m+1}(\Z)}(n)$ where $\Heis_{2m+1}(\Z)$ is the $(2m+1)$-dimensional integral Heisenberg group.

\begin{thm}\label{precise_heisenberg_calc}
$\Conj_{\Heis_{2m+1}(\Z)} (n) \approx n^{2m+1}$.
\end{thm}

For general nilpotent groups, we establish the following upper bound for $\Conj_{\Ga}(n)$.

\begin{thm}\label{upper}
Let $\Ga$ be a finitely generated nilpotent group. Then $\Conj_{\Ga}(n) \preceq n^{k}$ for some $k \in \N$.
\end{thm}

Blackburn \cite{Blackburn} was the first to prove conjugacy separability of finitely generated nilpotent groups. Our strategy for proving Theorem \ref{upper} is to effectivize \cite{Blackburn}.

For the same class of groups, we have the following lower bound which allows us to characterize virtually abelian groups within the class of finitely generated nilpotent groups. Moreoever, we obtain the first example of a class of groups for which the asymptotic behavior of $\Farb_{\Ga}(n)$ and $\Conj_{\Ga}(n)$ are shown to be dramatically different.

\begin{thm}\label{lower}
Let $\Ga$ be a finitely generated nilpotent group. \begin{itemize}
	\item[\textit{(i)}] If $\Gamma$ contains a normal abelian subgroup of index $m$, then $\log (n) \preceq \Conj_{\Gamma}(n) \preceq (\log (n))^{m}$.
	\item[\textit{(ii)}] Suppose that $\Gamma$ is not virtually abelian. Then there exists a $\psi_{\text{Lower}}(\Ga) \in \N$ such that $n^{\psi_{\text{Lower}}(\Ga)}  \preceq \Conj_{\Ga} (n).$ Additionally, one can compute $\psi_{\text{Lower}}(\Ga)$ given a basis for $(\Ga / T(\Ga))_c$ where $c$ is the step length of $\Ga / T(\Ga)$.
\end{itemize} 
\end{thm}

The proof of Theorem \ref{lower}(i) is elementary. We prove Theorem \ref{lower}(ii) by finding an infinite sequence of non-conjugate elements $\set{\gamma_i,\eta_i}$ such that the order of the minimal finite group that separates the conjugacy classes of $\gamma_i$ and $\eta_i$ is bounded below by $C \: n_i^{\psi_{\text{Lower}}(\Ga)}$ for some $C \in \N$ where $\|\ga_i\|_S,\|\eta_i\|_S \approx n_i$ for some finite generating subset $S$.

We have the following theorem which is similar in nature to Theorem \ref{maltsev_invariant}.
\begin{thm}\label{conj_malsev_invariant}
	Let $\Ga$ and $\De$ be infinite, finitely generated nilpotent groups of step size greater than or equal to $2$, and suppose  that $\Ga/ T(\Ga)$ and $\De / T(\De)$ have isomorphic Mal'tsev completions. Then $n^{\psi_{\text{Lower}}(\Ga)} \preceq \Conj_{\De}(n)$ and $n^{\psi_{\text{Lower}}(\De)} \preceq \Conj_{\Ga}(n).$
\end{thm}

We apply Theorem \ref{lower} to construct nilpotent groups that help demonstrate the various asymptotic behaviors that the growth of conjugacy separability may exhibit.
\begin{thm}\label{applications}
For natural numbers $c >1$ and $k \geq 1$, there exists an irreducible, torsion free, finitely generated nilpotent group $\Ga$ of step length $c$ such that $n^k \preceq \Conj_{\Ga}(n)$. 
\end{thm}
Theorem \ref{applications} implies that the conjugacy separability  function does not depend of the step length of the nilpotent group. We consider central products of filiform nilpotent groups for Theorem \ref{applications}.

\subsection{Acknowledgements}
I would like to thank my advisor Ben McReynolds for all his help and my mentor Priyam Patel for her guidance. I am indebted to Karel Dekimpe for his suggestions for Proposition \ref{important_estimate}. I also want to thank Khalid Bou-Rabee for looking at earlier drafts of this article and for his many insightful comments. Finally, I want to thank Alex Barrios, Rachel Davis, Jonas Der\'e, Artur Jackson, Gijey Gilliam, Brooke Magiera, and Nick Miller for conversations on this article.

\section{Background}
%
We will assume the reader is familiar with finitely generated groups, Lie groups and Lie algebras. 
\subsection{Notation and conventions}
We let $\lcm \set{r_1,\hdots, r_m}$ be the lowest common multiple of $\{r_1,\cdots,r_m\} \subseteq \Z$  with the convention that $\lcm(a) = |a|$ and $\lcm (a,0) = 0$. We let $\gcd(r_1,\cdots,r_m)$ be the greatest common multiple of $\{r_1,\cdots,r_m\} \subseteq \Z$ with the convention that $\gcd(a,0) = |a|$.
 
We denote $\|\ga\|_S$ as the word length of $\ga$ with respect to $S$ and denote the identity of $\Ga$ as $1$. We denote the order of $\ga$ as an element of $\Ga$ as $\text{Ord}_\Ga (\ga)$. We write $\ga \sim \eta$ when there exists a $g \in \Ga$ such that $g^{-1} \: \ga \: g = \eta$. For a normal subgroup $\De \nsub \Ga$, we set $\map{\pi_\De}{\Ga}{\Ga / \De}$ to be the natural projection and write $\bar{\ga} = \pi_\De(\ga)$ when $\De$ is clear from context. For a subset $X \subseteq \Ga$, we denote $\innp{X}$ to be the subgroup generated by $X$.

We define the commutator of $\ga$ and $\eta$ as $[\ga,\eta] = \ga^{-1} \: \eta^{-1} \: \ga \: \eta$. We denote the $m$-fold commutator of $\{\ga_i\}_{i=1}^m \subseteq \Ga$  as $[\ga_1,\hdots,\ga_m]$ with the convention that $[\ga_1,\hdots,\ga_m] = [[\ga_1,,\hdots, 
\ga_{m-1}],\ga_m]$.
 
We denote the center of $\Ga$ as $Z(\Ga)$ and the centralizer of $\ga$ in $\Ga$ as $C_{\Ga}(\ga)$. We define $\Ga_i$ to be the $i$-th term of the lower central series and $Z^i(\Ga)$ to be the $i$-th term of the upper central series. For $\ga \in \Ga - \set{1}$, we denote $\Height(\ga)$ as the minimal $j \in \N$ such that $\pi_{Z^{j-1}(\Ga)}(\ga) \neq 1$.

We define the abelianization of $\Ga$ as $\Ga_{\text{ab}}$ with the associated projection given by $\pi_{\text{ab}} = \pi_{[\Ga,\Ga]}$. For $m \in \N$, we define $\Ga^m \cong \innp{\ga^m \: | \: \ga \in \Ga}$  and denote the associated projection as $\sigma_m = \pi_{\Ga^m}$.

When given a basis $X = \{X_i\}_{i=1}^{\text{dim}_\R (\Fr{g})}$ for $\Fr{g}$, we denote $\|\sum_{i=1}^{\text{dim}_\R (\Fr{g})} \al_i \: X_i \|_X = \sum_{i=1}^{\text{dim}_\R (\Fr{g})} |\al_i|$. For a Lie algebra $\Fr{g}$ with a Lie ideal $\Fr{h}$, we define $\map{\pi_{\Fr{h}}}{\Fr{g}}{\Fr{g} / \Fr{h}}$ to be the natural Lie projection.

For a $\R$-Lie algebra $\Fr{g}$, we denote $Z(\Fr{g})$ to the center of $\Fr{g}$, $\Fr{g}_i$ to be the $i$-th term of the lower central series, and $Z^i(\Fr{g})$ to be the $i$-th term of the upper central series.

For $A \in \Fr{g}$, we define the map $\map{\ad_A}{\Fr{g}}{\Fr{g}}$ to be given by $\ad_A(B) = [A,B]$. We denote the $m$-fold Lie bracket of $\set{A_i}_{i=1}^m \subseteq \Fr{g}$ as $[A_1, \cdots A_m]$ with the convention that $[A_1, \cdots, A_m] = [[A_1,\cdots,A_{m-1}], A_m]$.
                                      
\subsection{Finitely generated groups and separability}

\subsubsection{Residually finite groups}\label{word_defn}
Following \cite{BouRabee10} (see also \cite{Rivin}), we define the depth function of $\Ga$ as $\map{\Depth_\Ga}{\Ga - \set{1}}{\N \cup \set{\infty}}$ for finitely generated groups to be given by
$$
\Depth_{\Gamma} (\gamma) \bdef \text{min}\set{|Q| \: | \: \: \map{\varphi}{\Gamma}{Q}, |Q| < \infty, \text{ and } \varphi (\gamma) \neq 1}.
$$
We define
$
\map{\Farb_{\Gamma,S}}{\mathbb{N}}{\mathbb{N}}
$
by
$
\Farb_{\Gamma,S} (n) \bdef \text{max}\set{\Depth_{\Gamma} (\ga) \: | \: \|\ga\|_S \leq n \text{ and } \ga \neq 1}.$ When $\Ga$ is a residually finite group, then $\Farb_{\Ga,S}(n) < \infty$ for all $n \in \N$. For any two finite generating subsets $S_1$ and $S_2$, we have 
$\Farb_{\Gamma,S_1} (n) \approx \Farb_{\Gamma,S_2} (n)$ (see \cite[Lem 1.1]{BouRabee10}). Therefore, we will suppress the choice of finite generating subset.

\subsubsection{Conjugacy separable groups}\label{conjugacy}
Following \cite{LLM}, we define the conjugacy depth function of $\Ga$ as
$\map{\CDepth_{\Gamma}}{\Gamma \times \Gamma - \set{(\ga,\eta) \: | \: \ga \sim \eta} }{\mathbb{N} \cup \set{\infty}}$ to be given by
$$
\CDepth_{\Gamma} (\gamma, \eta) \bdef
\text{min}\set{
|Q| \: |  \: \: \map{\varphi}{\Gamma}{Q}, |Q| < \infty, \text{ and } \varphi (\gamma) \nsim \varphi (\eta)}.
$$
We define $
\map{\Conj_{\Gamma,S}(n)}{\N}{\N}
$
 as
$
\Conj_{\Gamma,S} (n) \bdef \text{max} \set{\CDepth_{\Gamma} (\gamma, \eta) \: | \: \ga \nsim \eta \text{ and } \|\ga\|_S,\|\eta\|_S \leq n }.
$ When $\Ga$ is a conjugacy separable group, then $\Conj_{\Ga,S}(n) < \infty$ for all $n \in \N$.

\begin{lemma}\label{conj_gen_set}
If $S_1,S_2$ are two finite generating subsets of $\Ga$,  then $\Conj_{\Gamma,S_1} (n) \approx \Conj_{\Gamma,S_2} (n) .$
\end{lemma}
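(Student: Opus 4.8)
The plan is to follow the template of \cite[Lem 1.1]{BouRabee10}, which handles the analogous statement for $\text{F}_{\Gamma,S}$. The mechanism is that the word metrics coming from two finite generating subsets are bi-Lipschitz equivalent, whereas the quantity $\text{CD}_{\Gamma}$ and the non-conjugacy relation $\nsim$ are intrinsic to the abstract group $\Gamma$ and carry no dependence on a generating set.

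Concretely, first I would fix finite generating subsets $S_1, S_2$ of $\Gamma$, express each element of $S_1$ as a word in $S_2^{\pm 1}$ and each element of $S_2$ as a word in $S_1^{\pm 1}$, and let $C \in \N$ exceed the length of every one of these finitely many words. This yields $\norm{\gamma}_{S_2} \le C \, \norm{\gamma}_{S_1}$ and $\norm{\gamma}_{S_1} \le C \, \norm{\gamma}_{S_2}$ for all $\gamma \in \Gamma$, hence $\eval{B_{\Gamma,S_1}}{n} \subseteq \eval{B_{\Gamma,S_2}}{Cn}$. Since membership in $\eval{B_{\Gamma,S}^c}{m}$ only adds the condition $\gamma \nsim \eta$, which does not see the generating set, this inclusion promotes to $\eval{B_{\Gamma,S_1}^c}{n} \subseteq \eval{B_{\Gamma,S_2}^c}{Cn}$.

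Taking the maximum of $\text{CD}_{\Gamma}$ over these nested sets, and using once more that $\text{CD}_{\Gamma}$ depends only on $\Gamma$, I obtain
$$
\Conj_{\Gamma,S_1}\pr{n} = \max\set{\text{CD}_{\Gamma}\pr{\gamma,\eta} : \pr{\gamma,\eta} \in \eval{B_{\Gamma,S_1}^c}{n}} \le \max\set{\text{CD}_{\Gamma}\pr{\gamma,\eta} : \pr{\gamma,\eta} \in \eval{B_{\Gamma,S_2}^c}{Cn}} = \Conj_{\Gamma,S_2}\pr{Cn},
$$
which is exactly $\Conj_{\Gamma,S_1} \preceq \Conj_{\Gamma,S_2}$; swapping the roles of $S_1$ and $S_2$ gives the reverse inequality, so $\Conj_{\Gamma,S_1} \approx \Conj_{\Gamma,S_2}$. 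I do not anticipate a genuine obstacle here: the only ingredients are the standard bi-Lipschitz comparison of word lengths and the trivial observation that neither $\text{CD}_{\Gamma}$ nor $\nsim$ refers to a generating set, after which the argument is purely formal.
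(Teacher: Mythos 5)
Your proof is correct and is precisely the standard argument the paper invokes by citing \cite[Lem 1.1]{BouRabee10}: bi-Lipschitz equivalence of the two word metrics gives $\eval{B_{\Gamma,S_1}^c}{n} \subseteq \eval{B_{\Gamma,S_2}^c}{Cn}$, and since $\text{CD}_{\Gamma}$ and $\nsim$ are independent of the generating set, taking maxima yields $\Conj_{\Gamma,S_1} \preceq \Conj_{\Gamma,S_2}$ and, by symmetry, the equivalence.
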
 The proof is similar to \cite[Lem 1.1]{BouRabee10} (see also \cite[Lem 2.1]{LLM}). As before, we will suppress the choice of finite generating subset.

\cite[Lem 2.1]{LLM} implies that the order of minimal finite group required to separate a non-identity element $\ga \in \Ga$ from the identity is bounded above by order of the minimal finite group required to separate the conjugacy class of $\ga$ from the identity. Thus, $\Farb_{\Ga}(n) \preceq \Conj_{\Ga}(n)$ for all conjugacy separable groups. In particular, if $\Ga$ is conjugacy separable, then $\Ga$ is residually finite.
\subsection{Nilpotent groups and nilpotent Lie groups}
See \cite{Dekimpe,Hall_notes,Knapp,polycyclic} for a more thorough account of the material in this subsection. Let $\Gamma$ be a non-trivial, finitely generated group. The $i$-th term of the \emph{lower central series} is defined by $
\Ga_1 \bdef \Ga$ and for $i>1$ as $\Ga_i \bdef [\Ga_{i-1}, \Ga]$. The $i$-term of the \emph{upper central series} is defined by $\Ga^0 \bdef \set{1}$ and $Z^{i}(\Ga) \bdef  \pi_{Z^{i-1}(\Ga)}^{-1}(Z (\Ga / Z^{i-1}(\Ga)))$ for $i > 1$.

\begin{defn}
We say that $\Ga$ is a \emph{nilpotent group of step size $c$} if $c$ is the minimal natural number such that $\Ga_{c + 1} = \set{1}$, or equivalently, $Z^{c}(\Ga) = \Ga$. If the step size is unspecified, we simply say that $\Ga$ is a nilpotent group. We say that finitely generated nilpotent group is an \emph{admissible} group.
\end{defn}
 
For an admissible group $\Ga$,  the set of finite order elements of $\Ga$, denoted as $T (\Ga)$, is a finite order characteristic subgroup. Moreover, when $|\Ga| = \infty$, then $\Ga / T(\Ga)$ is torsion free.

Let $\Fr{g}$ be a non-trivial, finite dimensional $\R$-Lie algebra. The $i$-th term of the \emph{lower central series} of $\Fr{g}$ is defined by $\Fr{g}_1 \bdef \Fr{g}$ and for $i > 1$ as $\Fr{g}_i \bdef [\Fr{g}_{i-1}, \Fr{g}]$. We define the $i$-th term of the \emph{upper central series} as $Z^0(\Fr{g}) \bdef \set{0}$ and $Z^i(\Fr{g}) \bdef \pi_{Z^{i-1}(\Fr{g})}(Z(\Fr{g} / Z^{i-1}(\Fr{g})))$ for $i>1$. 

\begin{defn}
We say that $\Fr{g}$ is a \emph{nilpotent Lie algebra} of step length $c$ if $c$ is the minimal natural number satisfying $\Fr{g}^{c} = \Fr{g}$, or equivalently, $\Fr{g}_{c + 1} = \set{0}$. If the step size is unspecified, we simply say that $\Fr{g}$ is a nilpotent Lie algebra.\end{defn}

For a connected, simply connected nilpotent Lie group $G$ of step length $c$ with Lie algebra $\mathfrak{g}$, the exponential map, written as $\map{\exp}{\mathfrak{g}}{G}$, is a diffeomorphism \cite[Thm 1.127]{Knapp} whose inverse is formally denoted as $\Log$. The Baker-Campbell-Hausdorff formula \cite[(1.3)]{Dekimpe} implies that every $A,B \in \Fr{g}$ satisfies
\begin{equation}
A * B \bdef \Log (\exp A \cdot \exp B) \bdef A + B + \frac{1}{2} [  A, B ] + \sum_{m=3}^{\infty} CB_m(A,B)
\end{equation}
where $CB_m(A,B)$ is as rational linear combination of $m$-fold Lie brackets of $A$ and $B$. By assumption, $CB_m(A,B) = 0$ for $m > c$. For $\{A_i\}_{i=1}^m$ in $\Fr{g}$, we may more generally write
\begin{equation}\label{Baker}
A_1 * \cdots * A_m = \Log (\exp A_1  \cdots \exp A_m) = \sum_{i=1}^{c} CB_{i} (A_1, \cdots, A_m)
\end{equation}
where $CB_{i} (A_1, \cdots, A_m)$ is a rational linear combination of $i$-fold Lie brackets of $\{A_{j_t}\}_{t=1}^\ell \subseteq \set{A_i}_{i=1}^m$ via repeated applications of the Baker-Campbell-Hausdorff formula.

We define the adjoint representation $\map{\Ad}{G}{\Aut (\mathfrak{g})}$ of $G$ as $\Ad(g)(X) = (d\Psi_g)_1(X)$ where $\Psi_{g}(x) = g \: x \: g^{-1}$. By \cite[1.92]{Knapp}, we may write for $\ga \in \Ga$ and $A \in \Fr{g}$
\begin{equation}\label{baker_adjoint}
\Ad(\ga)(A)  = A + \frac{1}{2}[\Log(\ga), A] +  \sum_{i=3}^{c} \frac{(\ad_{\Log (\ga)})^i (A)}{ i!}.
\end{equation}

By \cite{Mal01}, a connected, simply connected nilpotent Lie group $G$ with Lie algebra $\Fr{g}$ admits a cocompact lattice $\Gamma$ if and only if $\mathfrak{g}$ admits a basis $\set{X_i}_{i=1}^{\text{dim}(G)}$ with rational structure constants (see \cite[Thm 7]{complete} for more details). We say $G$ is \emph{$\Q$-defined} if it admits a cocompact lattice. For any torsion free admissible group $\Gamma$, \cite[Thm 6]{complete} implies that there exists a $\mathbb{Q}$-defined group unique up to isomorphism in which $\Gamma$ embeds as a cocompact lattice. 
\begin{defn} We call this $\Q$-defined group the \emph{Mal'tsev completion} of $\Ga$. When given a $\Q$-defined group $G$, the tangent space at the identity with the Lie bracket of vector fields is a finite dimensional, nilpotent Lie algebra. We say that a connected, simply connected nilpotent Lie group is an \emph{admissible} Lie group.
\end{defn}

\subsection{Polycyclic groups}\label{polycylic_section_title}
See \cite{computational_group,rag,polycyclic} for the material contained in the following subsection.
\begin{defn}
A group $\Ga$ is \emph{polycyclic} if there exists an ascending chain of subgroups $\set{\De_i}_{i=1}^{m}$ such that $\De_1$ is cyclic, $\De_i \trianglelefteq \De_{i+1}$, and $\De_{i+1} / \De_{i}$ is cyclic for all $i$. We call $\set{\De_i}_{i=1}^{m}$ a \emph{cyclic series} for $\Ga$. We say $\set{\xi_i}_{i=1}^{m}$ is a \emph{compatible generating subset} with respect to $\set{\De_i}_{i=1}^m$ if $\innp{\xi_1} = \De_1$ and $\innp{\xi_{i+1}, \De_{i}} = \De_{i+1}$ for $i > 1$. We define the \emph{Hirsch length} of $\Ga$, denoted as $h(\Ga)$, as the number of indices $i$ such that $|\De_{i+1} : \De_{i}| = \infty$. 
\end{defn}

For a general polycyclic group, there may be infinitely many different cyclic series of arbitrary length (see \cite[Ex 8.2]{computational_group}). However, the Hirsch length of $\Ga$ is independent of the choice of cyclic series. With respect to the compatible generating subset $\set{\xi_i}_{i=1}^m$, \cite[Lem 8.3]{computational_group} implies that we may represent every $\gamma \in \Ga$ uniquely as $\gamma = \prod_{i=1}^{m} \xi_i^{\al_i}$ where $\al_i \in \Z$ if $|\De_{i+1} : \De_{i}| = \infty$ and $0 \leq \al_i < r_i$ if $|\De_{i+1} : \De_{i}| = r_i$. If $|\Ga| < \infty$, then the second paragraph after \cite[Defn 8.2]{computational_group} implies that $|\Ga| = \prod_{i=1}^{m} r_{i}$.

\begin{defn}\label{mal_coord}
We call the collection of such $m$-tuples a \emph{Mal'tsev basis} for $\Ga$ with respect to the compatible generating subset $\set{\xi_i}_{i=1}^{m}$. We call $(\al_i)_{i=1}^{m}$ the \emph{Mal'tsev coordinates} of $\ga$.
\end{defn}

For an admissible group $\Ga$, we may refine the upper central series of $\Gamma$ to obtain a cyclic series and compatible generating subset. First assume that $\Ga$ is abelian.  We may write $\Ga \cong \Z^m \oplus T(\Ga)$, and we let $\{\xi_i\}_{i=1}^{h(\Ga)}$ be a free basis for $\Z^m$. Since $T(\Ga)$ is a finite abelian group, there exists an isomorphism $\map{\varphi}{T(\Ga)}{\oplus_{i=1}^\ell \Z / p_i^{k_i} \Z}$. If $x_i$ generates $\Z / p_i^{k_i} \Z$ in $\oplus_{i=1}^\ell \Z / p_i^{k_i} \Z$, we then set $\xi_i = \varphi^{-1}(x_{i - h(\Ga)})$ for $h(\Ga) + 1 \leq i \leq h(\Ga) + \ell$. Thus, the groups $\{\De_i\}_{i=1}^{h(\Ga)+\ell}$ given by $\De_i = \innp{\xi_t}_{t=1}^i$ form a cyclic series for $\Ga$ with a compatible generating subset $\{\xi_i\}_{i=1}^{h(\Ga) + \ell}$.

Now assume that $\Ga$ has step length $c > 1$. There exists a generating basis $\set{z_i}_{i=1}^{h(\Ga)}$ for $Z(\Ga)$ and integers $\set{t_i}_{i=1}^{h(\Ga)}$ such that $\set{z_i^{t_i}}_{i=1}^{h(\Ga_c)}$ is a basis for $\Ga_c$ and for each $i$ there exist $x_i \in \Ga_{c-1}$ and $y_i \in \Ga$ such that $z_i^{t_i} = [x_i,y_i]$. We may choose a cyclic series $\set{H_i}_{i=1}^{h(\Ga)}$ such that $H_i = \innp{z_s}_{s=1}^i$. Induction implies that there exists cyclic series $\{\La_i\}_{i=1}^{k}$ and compatible generating subset $\{\la_i\}_{i=1}^{k}$ for $\Ga / Z(\Ga).$ For $1 \leq i \leq \ell$, we set $\De_i = H_i$, and for $\ell + 1 \leq i \leq \ell + k$, we set $\De_i = \pi_{Z(\Ga)}^{-1}(\La_{i-\ell})$. For $1 \leq i \leq \ell$, we set $\xi_i = z_i$. For $\ell + 1 \leq i \leq \ell + k$, we choose a $\xi_i$ such that $\pi_{Z(\Ga)}(\xi_i) = \la_{i - \ell}$. It then follows that $\{\De_i\}_{i=1}^{\ell + k}$ is a cyclic series for $\Ga$ with a compatible generating subset given by $\{\xi_i\}_{i=1}^{\ell+ k}$. Moreover, the given construction implies that $h (\Ga ) = \sum_{i=1}^{c} \text{rank}_{\mathbb{Z}} \: (Z^{i}(\Gamma) / Z^{i-1}(\Gamma))$. Whenever $\Ga$ is an admissible group, we choose the cyclic series and compatible generating subset this way. 

\begin{defn}
	An admissible $\Ga$ with a cyclic series $\set{\De_i}$ and a compatible generating subset $\set{\xi_i}$ is called an admissible $3$-tuple and is denoted as $\set{\Ga,\De_i,\xi_i}$.
\end{defn}

Let $\set{\Ga,\De_i,\xi_i}$ be an admissible $3$-tuple such that $\Ga$ is torsion free. \cite[Thm 6.5]{Hall_notes} implies that multiplication of $\ga,\eta \in \Ga$ can be expressed as polynomials in terms of the Mal'tsev basis associated to the cyclic series $\set{\De_i}_{i=1}^{h(\Ga)}$ and the compatible generating subset $\set{\xi_{i}}_{i=1}^{h(\Ga)}$. Specifically, we may write $\ga \: \eta = (\prod_{i=1}^{h (\Ga)} \xi_i^{a_i}) \cdot (\prod_{j=1}^{h (\Ga)}\xi_j^{b_j}) = \prod_{s=1}^{h(\Ga)}\xi_s^{d_s}$ where each $d_s$ can be expressed as a polynomial in the Mal'stev coordinates of $\ga$ and $\eta$, respectively. Similarly, we may write $\ga^\ell = (\prod_{i=1}^{h(\Ga)}\xi_{i}^{a_i})^\ell = \prod_{j=1}^{h (\Ga)}\xi^{e_j}$ where each $e_j$ can be expressed as a polynomial in the Mal'tsev coordinates of $\ga$ and the integer $\ell$.

The polynomials that define the group product and group power operation of $\Ga$ with respect to the given cyclic series and compatible generating subset have unique extensions to $\R^{h(\Ga)}$. That implies $G$ is diffeomorphic to $\R^{h(\Ga)}$ (see \cite[Thm 6.5]{Hall_notes}, \cite[Cor 1.126]{Knapp}) where $G$ is the Mal'tsev completion of $\Ga$. Consequently, the dimension and step length of $G$ are equal to the Hirsch length and step length of $\Gamma$, respectively. Thus, we may write $h(\Gamma) = \text{dim}(G)$. Furthermore, we may identify $\Ga$ with its image in $G_\Ga$ which is the set $\Z^{h(\Ga)}$. 

The following definition will be of use for the last lemma of this subsection.
\begin{defn}
	Let $\Ga$ be a torsion free admissible group, and let $\De \leq \Ga$ be a subgroup. We define the \emph{isolator of $\De$ in $\Ga$} as the subgroup given by
	$$
	\sqrt[\Ga]{\De} = \{ \ga \in \Ga \: | \: \text{ there exists } \: k \in \N \: \text{ such that } \: \ga^k \in \De \}\cup \set{1}.
	$$ 
\end{defn}
We make some observations. By the paragraph proceeding exercise 8 of \cite[Ch 8]{polycyclic} and \cite[Thm 4.5]{Hall_notes}, $\sqrt[\Ga]{\De}$ is a subgroup such that $|\sqrt[\Ga]{\De} : \De| < \infty$. If $\Ga$ is abelian, then we may write
$
\Ga = (\Ga / \sqrt[\Ga]{\De}) \oplus \sqrt[\Ga]{\De}.
$
We also have for any subgroup $\De \nsub \Ga$ that $\Ga / \sqrt[\Ga]{\De}$ is torsion free.

We finish this section with the following result. When given a torsion free admissible group $\Ga$, the following lemma  relates the word length of element $\ga$ in $\Ga$ to the Mal'tsev coordinates of $\ga$ with respect to a choice of a cyclic series and a compatible generating subset.
\begin{lemma}\label{coord_bound}
	Let $\set{\Ga,\De_i,\xi_i}$ be an admissible $3$-tuple such that $\Ga$ has step length $c$. If $\ga \in \Ga$ such that $\|\ga\|_S \leq n$, then $|\al_i| \leq C \: n^{c}$ where $(\al_i)$ are the Mal'tsev coordinates of $\ga$ for some $C \in \N$.
\end{lemma}
\begin{proof}
	If $\Ga$ is finite, then the statement is evident. Thus, we may assume that $|\Ga| = \infty$. We proceed by induction on step length, and observe that the base case of abelian groups is clear. Now suppose that $\Ga$ has step length $c > 1$ and that $\|\ga\|_S \leq n$. Since $\|\pi_{\Ga_i}(\ga)\|_{\pi_{\Ga_i}(S)} \leq n$, the inductive hypothesis implies there exists a constant $C_0 > 0$ such that $|\al_t| \leq C_0 \: n^{t}$ when $\pi_{\Ga_t}(\xi_i) \neq 1$. We let $m$ be the smallest integer such that $\xi_i \notin \Ga_c$ when $i > m$ and $\xi_i \in \Ga_c$ for $i \leq m$, and let $k$ be the length of the cyclic series $\De_i$.
	
	Suppose that $\xi_i \notin \Ga_{t+1}$, but $\xi_i \in \Ga_t$ where $t < c$. We have by assumption that $|\al_i| \leq C_0 \: n^t$. By \cite[3.B2]{asymptotic_group}, we have $\|\xi_i^{\al_i}\|_S \approx |\al_i|^{1/t}$. Thus, there exists a constant $C_1$ such that $\|\xi_i^{\al_i}\|_S \leq C_1 |\al_i|^{1/t}$ when $\text{Ord}_\Ga(\xi_i) = \infty$. Therefore, we may write $\|\xi_i^{\al_i}\|_S \leq  C_0 \: C_1^{1/t} \: n$. Thus, for $\xi_i \notin \Ga_{c}$ such that $\text{Ord}_\Ga(\xi_{i}) = \infty$, we have $\|\xi_i^{\al_i}\|_S \leq C_2 \: n$ for some $C_2 \in \N$. By increasing $C_2$, we may assume that $|C_2| > |T(\Ga)|$, and thus, we cover the cases where $\text{Ord}_\Ga(\xi_i) < \infty$.
	
	Letting $\zeta = (\prod_{i=m + 1}^k \xi_i^{\al_i})^{-1}$ where $\xi_i \in \Ga_{c}$, it is evident that $\|\zeta\|_S \leq C_3 \: n$ for some $C_3 \in \N$. Thus, if $\eta = \prod_{i=1}^{m}\xi_i^{\al_i}$, we may write $\| \eta \|_S =\|\ga \: \zeta \|_S  \leq \|\ga \|_S + \| \zeta\|_S \leq 2 \: C_0 \: n$. Hence, we may assume that $\ga \in \Ga_{c}$.
	
	We may write $\ga = \prod_{i=1}^{m} \xi_i^{\al_i}$ where $\xi_i \in \Ga_{c}$. If we let $\la_t = \prod_{i=1,i \neq t}^{m}\xi_i^{-\al_i}$, we may write $\|\xi_t^{\al_t}\|_{S} = \|\zeta \: \la_t\|_S \leq \|\zeta\|_S + \|\la_t\|_S \leq 4 \: C_0 \: n$. Thus, we need only consider when $\ga = \xi_i^{\al_i}$ for $\xi_i \in \Ga_{c}$.  Since $\xi_i \in \Ga_{c}$, \cite[3.B2]{asymptotic_group} implies that $|\al_i| \leq C_3 \: n^{c}$ for some $C_3 \in \N$.  \end{proof}

\section{Torsion-free Quotients with One Dimensional Centers}
In the following subsection, we define what a choice of an admissible quotient with respect to a central non-trivial element is, what a choice of a maximal admissible quotient is, and define the constants $\psi_{\text{RF}}(\Ga)$ and $\psi_{\text{Lower}}(\Ga)$ for an infinite admissible group $\Ga$.

\subsection{Existence of torsion free quotients with one dimensional center}
The following proposition will be useful throughout this article. 
\begin{prop}\label{one_dim_center}
Let $\Ga$ be a torsion free admissible group, and suppose that $\ga$ is a central, non-trivial element. There exists a normal subgroup $\La$ in $\Ga$ such that $\Ga / \La$ is an irreducible, torsion free, admissible group such that $\innp{\pi_{\La}(\ga)}$ is a finite index subgroup of $Z(\Ga / \La)$. If $\ga$ is primitive, then $Z(\Ga / \La) \cong \innp{\pi_{\La}(\ga)}$.
\end{prop}
\begin{proof}
We construct $\La$ by induction on Hirsch length, and since the base case is trivial, we may assume that $h  (\Gamma) > 1$. If $Z (\Gamma) \cong \Z$, then the proposition is now evident by letting $\La = \{1\}$. 

Now assume that $h(Z(\Ga)) \geq 2$. There exists a basis $\{z_i\}_{i=1}^{h(Z(\Ga))}$ for $Z(\Ga)$ such that $z_1^k = \ga$ for some $k \in \Z - \set{0}$. Letting $K = \innp{z_i}_{i=2}^{h (Z(\Ga))}$, we note that $K \nsub \Ga$ and $\pi_K (\ga) \neq 1$. Additionally, it follows that $\Ga / K$ is a torsion free, admissible group. If $h(Z(\Ga / K)) = 1$, then our proposition is evident by defining $\La \cong K$.

Now suppose that $h(Z(\Ga / K)) \geq 2$. Since $h(\Ga / K) < h(\Ga)$, the inductive hypothesis implies there exists a subgroup $\La_1$ such that $\La_1 \nsub \Ga / K$ and where $(\Ga / K)/ \La_1$ is a torsion free admissible group. Letting $\map{\rho}{\Ga / K}{(\Ga / K) / \La_1}$ be the natural projection, induction additionally implies that $Z((\Ga / K)/ \La_1) \cong \innp{\rho (\pi_K(\ga))}$. Taking $\La_2 = \pi_{K}^{-1}(\La_1)$, we note that $\La_2 / K \cong \La_1$ and that $K \leq \La_2$. Thus, the third isomorphism theorem implies that $(\Ga / K) / (\La_2 / K) \cong \Ga / \La_2$. Hence, $\Ga / \La_2$ is a torsion free admissible, and by construction, $\innp{\pi_{\La_2}(\ga)}$ is a finite index subgroup of $Z(\Ga / \La_2)$.

Letting $\La$ satisfy the hypothesis of the proposition for $\ga$, we now demonstrate that $\Ga / \La$ is irreducible. Suppose for a contradiction there exists a pair of non-trivial admissible groups $\De_1$ and $\De_2$ such that $\Ga / \La \cong \De_1 \times \De_2$. Since $\Ga / \La$ is torsion free, $\De_1$ and $\De_2$ are torsion free. By selection, it follows that $Z(\De_1)$ and $Z(\De_2)$ are torsion free, finitely generated abelian groups. Hence, $\Z^2$ is isomorphic to a subgroup of $Z(\Ga / \La)$. Subsequently, $h(Z(\Ga / \La)) \geq 2$ which is a contradiction. Thus, either $\De_1 \cong \{1\}$ or $\De_2 \cong \{1\}$, and subsequently, $\Ga / \La$ is irreducible.
\end{proof}

\begin{defn}
	Let $\ga \in \Ga$ be a central, non-trivial element, and let $\mathcal{J}$ be the set of subgroups of $\Ga$ that satisfy Proposition \ref{one_dim_center} for $\ga$. Since the set  $\{h(\Ga / \La) \: | \: \La \in \mathcal{J}\}$ is bounded below by $1$, there exists an $\Omega \in \mathcal{J}$ such that $h(\Ga / \Omega) = \text{min}\{h(\Ga / \La) \: | \: \La \in \mathcal{J}\}$. We say  $\Omega$ is a choice of a \emph{admissible quotient of $\Ga$ with respect to $\ga$ }.
\end{defn}
For a primitive $\ga \in Z(\Ga) - 1$, we let $\Ga/ \La_1$ and $\Ga / \La_2$ be two different choices of admissible quotients of $\Ga$ with respect to $\ga$. In general, $\Ga / \La_1 \ncong \Ga / \La_2$. On the other hand, we have, by definition, that $h(\Ga / \La_1) = h(\Ga / \La_2)$. Subsequently, the Hirsch length of a choice of an admissible quotient with respect to $\ga$ is a natural invariant of $\Ga$ associated to $\ga$. Such a quotient corresponds to a torsion free quotient of $\Ga$ of minimal Hirsch length such that $\ga$ has a non-trivial image that generates the center. That will be useful in finding the smallest finite quotient in which $\ga$ has a non-trivial image. \begin{defn}	
	Let $\Ga$ be a torsion free admissible group of step length $c$. For each $\ga \in Z(\Ga) - \set{1}$, we let $\Ga / \La_{\ga}$ be a choice of an admissible quotient of $\Ga$ with respect to $\ga$. Let $\mathcal{J}$ be the set of $\ga \in Z(\Ga) - \set{1}$ such that there exists a $k$ such that $\ga^k = [a,b]$ where $a \in \Ga_{c - 1}$ and $b \in \Ga$. Observe that the set $\set{h(\Ga / \La_{\ga} \: | \: \ga \in \mathcal{J})}$ is bounded above by $h(\Ga)$. Thus, there exists an $\eta \in \mathcal{J}$ such that $h(\Ga / \La_{\eta}) = \text{max}\set{h(\Ga / \La_{\ga}) \: | \: \ga \in \mathcal{J}}.$ We say that $\Ga / \La_{\eta}$ corresponds to a \emph{choice of a maximal admissible quotient of $\Ga$.}
\end{defn}
 
We now define $\psi_{\text{RF}}(\Ga)$ and $\psi_{\text{Lower}}(\Ga)$ when $\Ga$ is an infinite admissible group.
 	\begin{defn}
 		Let $\Ga$ be an infinite admissible group, and let $(\Ga / T(\Ga)) / \La$ be a choice of a maximal admissible quotient of $\Ga / T(\Ga)$. We set $\psi_{\text{RF}}(\Ga) = h((\Ga / T(\Ga)) / \La)$. When $\Ga$ is not virtually abelian, we define $\psi_{\text{Lower}}(\Ga) = \psi_{\text{RF}}(\Ga)\pr{c - 1 }$ where $c$ is the step length of $\Ga / T(\Ga)$.
 	\end{defn}
Suppose that $\Ga / \La_1$ and $\Ga / \La_2$ are two choices of a maximal admissible quotients of $\Ga$ when $\Ga$ is torsion free. In general, $\Ga / \La_1 \ncong \Ga / \La_2$. However, $h(\Ga / \La_1) = h(\Ga / \La_2) = \psi_{\text{RF}}(\Ga)$ by definition; hence, $\psi_{\text{RF}}(\Ga)$ is a well defined invariant of $\Ga$. The value $\psi_{\text{RF}}(\Ga)$ is important because it corresponds to the polynomial in logarithm degree of growth for $\Farb_{\Ga}(n)$. Similarly, we have that $\psi_{\text{Lower}}(\Ga)$ is a well defined invariant of admissible groups that are not virtually abelian. Moreover, the value  $\psi_{\text{Lower}}(\Ga)$ will correspond to the polynomial degree of growth of an asymptotic lower bound for $\Conj_{\Ga}(n)$.

A natural observation is that if $h(Z(\Ga / T(\Ga))) = 1$, then $\psi_{\text{RF}}(\Ga) = h(\Ga)$. Additionally, if $\Ga$ is an infinite, finitely generated abelian group, then $\psi_{\text{RF}}(\Ga) = 1$.

Let $\Ga$ be a torsion free admissible group with a primitive $\ga \in Z(\Ga) - \set{1}$, and let $\Ga / \La$ be a choice of an admissible quotient $\Ga / \La$ with respect to $\ga$. The next proposition demonstrates that we may choose a cyclic series and a compatible generating subset such that a subset of the compatible generating subset generates $\La$.
\begin{prop}\label{minimal_quotient_cyclic_series}
	Let $\Ga$ be a torsion free admissible group, and let $\ga$ be a primitive, central non-trivial element. Let $\Ga / \La$ be a choice of an admissible quotient with respect to $\ga$. There exists a cyclic series $\set{\De_i}_{i=1}^{h(\Ga)}$ and a compatible generating subset $\set{\xi_i}_{i=1}^{h(\Ga)}$ such that $\Ga / \La$ is a choice of an admissible quotient with respect to $\xi_1$ where $\ga = \xi_1$. Moreover, there exists a subset, possibly empty, $\set{\xi_{i_j}}_{j=1}^{h(\La)}$ of the compatible generating subset satisfying the following. The subgroups $W_{t} = \innp{\xi_{i_j}}_{j=1}^t$ form a cyclic series for $\La$ with a compatible generating subset given by $\set{\xi_{i_j}}_{j=1}^{h(\La)}$.
\end{prop} 	
\begin{proof}
	We proceed by induction on $h(\Ga)$, and note that the base case of $h(\Ga) = 1$ is evident. Thus, we may assume that $h(\Ga) > 1$. If $h(Z(\Ga)) = 1$, then $\La \cong \set{1}$; hence, we may take any cyclic series and compatible generating subset. Therefore, we may also assume that $h(Z(\Ga)) > 1$.
	
	There exists a generating basis $\set{z_i}_{i=1}^{h(Z(\Ga)}$ for $Z(\Ga)$ such that $z_1 = \ga$. Letting $K = \innp{z_{i}}_{i = 2}^{h(Z(\Ga))}$, we note that $K \leq \La$. By passing to the quotient $\Ga / K$, we note that $(\Ga /K) / (\La / K)$ is a choice of an admissible quotient with respect to $\pi_K(\ga) = 1$.  Induction implies that there exists a cyclic series $\set{\De_i / K}_{i=1}^{h(\Ga / K)}$ and a compatible generating subset $\set{\pi_{K}(\xi_i)}_{i=1}^{h(\Ga / K)}$ such that there exists a subset $\set{\pi_{K}(\xi_{i_j})}_{j=1}^{h(\Ga / K)}$ satisfying the following. The subgroups $W_t / K = \innp{\xi_{i_j}}_{j=1}^t$ form a cyclic series for $\La / K$ with a compatible generating subset $\set{\pi_{K}(\xi_{i_j})}_{j=1}^{h(\La /K)}$. 
	We let $H_i = \innp{z_s}_{s=1}^{i}$ for $ 1 \leq i \leq h(Z(\Ga))$ and for $i > h(Z(\Ga))$, we let $H_i = \innp{\set{K} \cup \set{\xi_t}_{t=1}^{i-h(K)}}$. We also take $\eta_i = z_i$ for $1 \leq i \leq h(Z(\Ga))$ and for $i > h(Z(\Ga))$, we take $\eta_i = \xi_{i - h(Z(\Ga))}$. Thus, $\set{H_i}_{i=1}^{h(\Ga)}$ is cyclic series for $\Ga$ with a compatible generating subset  $\set{\eta_i}_{i=1}^{h(\Ga)}$.
	
	Consider the subset $\set{\eta_{i_j}}_{j=1}^{h(\La)}$ where $\eta_{i_j} = z_{j +1}$ for $1 \leq j \leq h(K)$ and where $\eta_{i_j} = \xi_{i_{j - h(K)}}$ for $j > h(K)$. Thus, by selection, $\set{\eta_{i_j}}_{j=1}^{h(\La)}$ is the required subset.	\end{proof}

For the next two propositions, we establish some notation. Let $\Ga$ be a torsion free admissible nilpotent group. For each primitive element $\ga \in Z(\Ga) - \set{1}$, we let $\Ga / \La_{\ga}$ be a choice of an admissible quotient with respect to $\ga$. 

We demonstrate that we may calculate $\psi_{\text{RF}}(\Ga)$ for $\Ga$ when given a generating basis for $(\Ga / T(\Ga))_c$ where $c$ is the step length of $\Ga / T(\Ga)$.
\begin{prop}\label{one_dimension_center_compatible_generating subset}
	Let $\Ga$ be a torsion free admissible group, and let $\set{z_i}_{i=1}^{h(Z(\Ga))}$ be a basis of $Z(\Ga)$. Moreover, assume there exist integers $\set{t_i}_{i=1}^{h(\Ga_c)}$ such that $\set{z_i^{t_i}}_{i=1}^{h(\Ga_c)}$ is a basis of $\Ga_c$ and that there exist $a_i \in \Ga_{c - 1}$ and $b_i \in \Ga$ such that $z_i^{t_i} = [a_i,b_i]$. For each $\ga \in \sqrt[Z(\Ga)]{\Ga_c}$, there exists an $i_0 \in \set{1, \cdots, h(\Ga_c)}$ such that $\Ga / \La_{z_{i_0}}$ is a choice of an admissible quotient with respect to $\ga$. More generally, if $\set{z_i}_{i=1}^{h(Z(\Ga)}$ is any basis of $Z(\Ga)$, then there exists $i_0 \in \set{1,\cdots,h(Z(\Ga))}$ such that $\Ga / \La_{i_0}$ is a choice of an admissible quotient with respect to $\ga$.
\end{prop}
\begin{proof}
	Letting $M = \sqrt[Z(\Ga)]{\Ga_c}$, we may write $\ga = \prod_{i=1}^{h(M)}z_i^{\al_i}$. There exist indices $1 \leq i_1 < \cdots < i_\ell \leq h(M)$ such that $\al_{i_j} \neq 0$ for $1 \leq j \leq \ell$ and $\al_{i} = 0$, otherwise. We observe that $\Ga / \La_{z_{i_t}}$ satisfies the conditions of Proposition \ref{one_dim_center} for $\ga$ for each $1 \leq t \leq \ell$.  Therefore, $h(\Ga / \La_\ga) \leq \text{min}\{ h(\Ga / \La_{z_{i_t}})\: | \: 1 \leq t \leq \ell\}$. 
	
	Since $\pi_{\La_\ga}(\ga) \neq 1$, there exists $i_{t_0}$ such that $\pi_{\La_\ga}(z_{i_{t_0}}) \neq 1$. Thus, $\Ga / \La_\ga$ satisfies the conditions of Proposition \ref{one_dim_center} for $z_{i_{t_0}}$. Thus, $ h(\Ga / \La_{z_{i_{t_0}}}) \leq h(\Ga / \La_\ga)$. In particular, $\text{min}\{h(\Ga / \La_{z_{i_t}}) \: | \: 1 \leq t \leq \ell\} \leq h(\Ga / \La_\ga)$. Therefore, $h(\Ga / \La_{\ga}) = \text{min}\{h(\Ga / \La_{z_{i_t}}) \: | \: 1 \leq i \leq \ell \}$. The last statement follows using similar reasoning.
\end{proof}

The following proposition demonstrates that $\psi_{\text{RF}}(\Ga)$ can always be realized as the Hirsch length of a choice of an admissible quotient with respect to a central element of a fixed basis of $\Ga_c$. 
\begin{prop}\label{cyclic_series_maximal_one_dimensional_center}
	Let $\Ga$ be a torsion free admissible group of step length $c$ with a basis $\set{z_{i}}_{i=1}^{h(Z(\Ga))}$ for $Z(\Ga)$. Moreover, assume there exist integers $\set{t_i}_{i=1}^{h(\Ga_c)}$ such that $\set{z_i^{t_i}}_{i=1}^{h(\Ga_c)}$ is a basis of $\Ga_c$ and that there exist elements $a_i \in \Ga_{c -1}$ and $b_i \in \Ga$ such that  $z_i^{t_i} = [a_i,b_i]$. There exists an $i_0 \in \set{1, \cdots, h(\Ga_c)}$ such that $\psi_{\text{RF}}(\Ga) = h(\Ga / \La_{z_{i_0}})$. Hence, $\psi_{\text{RF}}(\Ga) = \text{max}\set{h(\Ga / \La_{z_i}) \: | \: 1 \leq i \leq h(\Ga_c)}.$ More generally, if $\set{z_i}$ is any basis of $Z(\Ga)$, then  $\psi_{\text{RF}}(\Ga) = \text{max}\set{h(\Ga / \La_{z_i}) \: | \: 1 \leq i \leq h(\Ga)}$
\end{prop}
\begin{proof} 
	Let $\mathcal{J}$ be the set of central non-trivial $\ga$ such that there exists a $k$ where $\ga^k$ is a $c$-fold commutator bracket. Given that the set $\{h(\Ga / \La_\ga) \: | \ga \in \mathcal{J}\}$ is bounded above by $h(\Ga)$, there exists a non-trivial element $\eta \in \mathcal{J}$ such that $h(\Ga / \La_\eta) = \text{max}\{h(\Ga / \La_\ga) \: | \ga \in \mathcal{J} \}.$ Proposition \ref{one_dimension_center_compatible_generating subset} implies there exists an $i_0 \in \set{1, \cdots, h(Z(\Ga))}$ such that $h(\Ga / \La_\eta) = h(\Ga / \La_{z_{i_0}})$. By the definition of $\psi_{\text{RF}}(\Ga)$, it follows  $\psi_{\text{RF}}(\Ga) =\text{max}\{h(\Ga / \La_{z_{i_0}}) \: | \: 1 \leq i \leq h(\Ga_c)\}$. The last statement follows using similar reasoning.
\end{proof}

\subsection{Properties of admissible quotients of $\Ga$}
We demonstrate conditions for a choice of an admissible quotient of $\Ga$ with respect to some primitive, central, non-trivial element to have the same step length as $\Ga$. For the next proposition, if $\Ga$ is an admissible group,then we fix its step length as $c(\Ga)$.

\begin{prop}\label{step_length_maximal_one_dim_center}
	Let $\Ga$ be a torsion free admissible group. If we let $\ga \in \sqrt[Z(\Ga)]{\Ga_{c(\Ga)}} - \set{1}$ be a primitive element with a choice of admissible quotient $\Ga / \La$ with respect to $\ga$, then $c(\Ga / \La) = c(\Ga)$. In particular, if $\Ga / \La$ is a choice of a maximal admissible quotient of $\Ga$, then $c(\Ga / \La) = c(\Ga)$. If $c(\Ga) > 1$, then $h(\Ga / \La) \geq 3$.
\end{prop}
\begin{proof}
	By definition, there exists $k \in \Z - \{0\}$ such that $\ga^k \in \Ga_{c(\Ga)}$ Suppose for a contradiction that $c(\Ga / \La) < c(\Ga)$. That implies $\Ga_{c(\Ga)} \leq \ker (\pi_{\La})$, and hence, $\pi_{\La}(\ga^k) = 1$. Since $\Ga / \La$ is torsion free, it follows that $\pi_{\La}(\ga) = 1$. That contradicts the construction of $\Ga / \La$, and thus, $c(\Ga / \La) = c(\Ga)$. 
	
	Since every irreducible, torsion free admissible group $\Ga$ where $c(\Ga) \geq 2$ contains a subgroup isomorphic to the $3$-dimensional integral Heisenberg group, we have  $h(\Ga / \La) \geq 3$.
\end{proof}

The following proposition relates  the value $\psi_{\text{RF}}(\Ga)$ to the value $\psi_{\text{RF}}(\La)$ when $\La$ is a torsion-free quotient of $\Ga$ of lower step length.
\begin{prop}\label{induction_prec_rf}
	Let $\Ga$ be a torsion free admissible group of step length $c > 1$. If $M = \sqrt[Z(\Ga)]{\Ga_{c}}$, then $\psi_{\text{RF}}(\Ga) \geq \psi_{\text{RF}}(\Ga / M)$.
\end{prop}
\begin{proof} There exist elements $\set{z_i}_{i=1}^{h(Z(\Ga/M))}$ and integers $\set{t_i}_{i=1}^{h(N)}$ satisfying the following. The set $\{\pi_M(z_i)\}_{i=1}^{h(Z(\Ga / M))}$ generates $Z(\Ga / M)$ and there exist $a_i \in (\Ga / M)_{c-2}$ and $b_i \in \Ga / M$ such that $\pi([a_i,b_i]) = \pi_{M}(z_i^{t_i})$. Finally, the set $\innp{\set{\pi_M(z_i)^{t_i}}_{i=1}^{h(N)}}$ generates $\sqrt[Z(\Ga / M)]{(\Ga / M)_{c - 1}}$. 
	
	There also exist $\ga_i \in \Ga$ such that the $\set{[z_i,\ga_i]}_{i=1}^{h(\Ga_{c})}$ generate $\Ga_{c}$. Finally, there exist elements $\set{y_i}_{i=1}^{h(M)}$ in $Z(\Ga)$ and integers $\set{s_i}_{i=1}^{h(M)}$ such that $y_i^{s_i} = [z_i,\ga_i]$. For each $i \in \set{1, \cdots, h(M)}$, we let $\Ga / \La_i$ be a choice of an admissible quotient with respect to $y_i$.
	
Let $(\Ga / M) / \Omega_i$ be a choice of an admissible with respect to $\pi_M(z_i)$. It is evident that $\La \leq \pi_M^{-1}(\Omega_i)$. Thus, it follows that $h(\Ga / \La_i) \geq h(\Ga / \pi_M^{-1}(\Omega_i)) = h((\Ga / M) / \Omega_i)$.  Proposition \ref{cyclic_series_maximal_one_dimensional_center} implies $\psi_{\text{RF}}(\Ga) \geq h((\Ga / M) / \Omega_i)$. Applying Proposition \ref{cyclic_series_maximal_one_dimensional_center} again, we have that $\psi_{\text{RF}}(\Ga) \geq \psi_{\text{RF}}(\Ga / M)$. \end{proof}

This last proposition demonstrates that the definition of $\psi_{\text{RF}}(\Ga)$ is the maximum value over all possible Hirsch lengths of choice of admissible quotients with respect to primitive, central non-trivial elements of $\Ga$.

\begin{prop}
Let $\Ga$ be a torsion free admissible group of step length $c$, and for each primitive $\ga \in Z(\Ga) - \set{1}$, let $\Ga / \La_{\ga}$ be a choice of an admissible quotient with respect to $\ga$. Then $\psi_{\text{RF}}(\Ga) = \text{max} \set{h(\Ga / \La_{\ga}) \: | \: \ga \in Z(\Ga) - \set{1}}.$
\end{prop}
\begin{proof}
	The statement is evident for torsion free, finitely generated, abelian groups since $\sqrt[Z(\Ga)]{\Ga_{c}} \cong \Ga$. Thus, we may assume that $c > 1$.
	
	Let $M = \sqrt[Z(\Ga)]{\Ga_{c}}$, and let $\ga \in Z(\Ga) - \set{1}$. There exists a basis $\set{z_i}^{h(Z(\Ga))}$ for $Z(\Ga)$ and integers $\set{t_i}_{i=1}^{h(\Ga_c)}$  such that $\set{z_i^{t_i}}_{i=1}^{h(\Ga)}$ is a basis for $\Ga_c$. Moreover,  there exist $a_i \in \Ga_{c -1}$ and $b_i \in \Ga$ such that $z_i^{t_i} =[a_i,b_i]$. If $\ga \in M$, then by definition of $\psi_{\text{RF}}(\Ga)$ and Proposition \ref{cyclic_series_maximal_one_dimensional_center}, we have $h(\Ga / \La_\ga) \leq \psi_{\text{RF}}(\Ga)$. Thus, we may assume that $\ga \notin M$.
	
	Since $\ga \notin M$, $\pi_M(\ga) \neq 1$. Hence, it is evident that $(\Ga / M) / \pi_{M}(\La_\ga)$ satisfies Proposition \ref{one_dim_center} for $\pi_{M}(\ga)$. Thus, if $(\Ga / M)/ \Omega$ is a choice of an admissible quotient with respect to $\pi_M(\ga)$, we note that $\Ga / \pi_K^{-1}(\Omega)$ satisfies Proposition \ref{one_dim_center} for $\ga$. Thus, by definition, $$h(\Ga / \La_\ga) \leq h(\Ga / \pi_K^{-1}(\Omega)) \leq h((\Ga / M)/ \Omega) \leq \psi_{\text{RF}}(\Ga / M).$$ Proposition \ref{induction_prec_rf} implies that $\psi_{\text{RF}}(\Ga / M) \leq \psi_{\text{RF}}(\Ga)$. Thus, $h(\Ga / \La_{\ga}) \leq \psi_{\text{RF}}(\Ga)$ giving the result.
\end{proof}

\begin{defn}
	A choice of a torsion free admissible group $\Ga$ with a choice of a maximal admissible quotient $\Ga / \La$, and cyclic series $\set{\De_i}_{i=1}^{h(\Ga)}$ a compatible generating subset $\set{\xi_i}_{i=1}^{m}$ that satisfy Proposition \ref{minimal_quotient_cyclic_series} is called an admissible $4$-tuple and is denoted as $\set{\Ga, \La, \De_i, \xi_i}$. 
\end{defn} 

Whenever we are given an admissible $4$-tuple $\{\Ga, \La, \De_i,\xi_i\}$, we take the Mal'tsev completion to be constructed as defined in \S \ref{polycylic_section_title}. We observe that the vectors $v_i = \Log (\xi_i)$ span $\Fr{g}$ the Lie algebra of the Mal'tsev completion. We call the subset $\set{v_i}_{i=1}^{h (\Gamma)}$ an \emph{induced basis} for $\Fr{g}$.

\begin{defn}\label{important_bound_defn}
An admissible $4$-tuple $\set{\Ga, \La, \De_i,\xi_i}$,  Mal'tsev completion $G$ with Lie algebra $\Fr{g}$, and induced basis $\left\{ v_i \right\}_{i=1}^{h (\Gamma)}$ is called an \emph{admissible $7$-tuple} and is denoted as $\set{\Ga, \La, \De_i, \xi_i,  G, \Fr{g}, \nu_i}$. We always take $S =\set{\xi_i}_{i=1}^{h(\Ga)}$ as a generating subset of $\Ga$ and $X = \set{\nu_i}_{i=1}^{h(\Ga)}$ as a basis of $\Fr{g}.$ 
\end{defn}
If we are given a admissible $4$-tuple $\set{\Ga, \La, \De_i,\xi_i}$, then we have a natural choice of an admissible $3$-tuple given by $\set{\Ga, \De_i,\xi_i}$. Whenever we are given an admissible $7$-tuple $\set{\Ga, \La, \De_i,\xi_i, G,\Fr{g},\nu_i}$, then $\set{\Ga,\La,\De_i,\xi_i}$ is an admissible $4$-tuple and $\set{\Ga,\De_i,\xi_i}$ is an admissible $3$-tuple.

\section{Commutator geometry and lower bounds for residual finiteness}\label{lower_bound_geometry}

The following definitions and propositions will be important in the construction of the lower bounds found in the proof of Theorem \ref{word_precise}. \subsection{Finite Index Subgroups and Cyclic Series}
The following proposition tells us how to view finite index subgroups in light of a choice of a cyclic series and compatible generating subset.
\begin{prop}\label{cyclic_series_finite_index_subgroup}
	Let $\{\Ga,\De_i,\xi_i\}$ be an admissible $3$-tuple. Suppose $\Ga$ is torsion free, and let $K \leq \Ga$ be a finite index subgroup. Then there exist natural numbers $\{t_i\}_{i=1}^{h(\Ga)}$ satisfying the following. The subgroups $\{H_i\}_{i=1}^{h(\Ga)}$ given by $H_i = \innp{\xi_s^{t_s}}_{s=1}^i$ form a cyclic series for $K$ with a compatible generating subset given by $\{\xi_i^{t_i}\}_{i=1}^{h(\Ga)}$.
\end{prop}
\begin{proof}
	We proceed by induction on Hirsch length. For the base case, assume that $h(\Ga) = 1$. In this case, we have that $\Ga \cong \Z$ and that $K \cong t \Z$ for some $t \geq 1$. Now the statement of the proposition is evident by choosing $H_1 = K$ and the compatible generating subset is given by $t$.
	
	Thus, we may assume $h(\Ga) > 1$. Observing that  $\De_{h(\Ga)- 1} \cap K$ is a finite index subgroup of $\De_{h(\Ga) - 1}$ and that $h(\De_{h(\Ga) - 1}) = h(\Ga) - 1$, the inductive hypothesis implies there exist natural numbers $\set{t_i}_{i=1}^{h(\Ga)}$ satisfying the following. The groups $\{H_i\}_{i=1}^{h(\Ga) - 1}$ given by $H_i = \innp{\xi_s^{t_s}}_{s=1}^i$ form a cyclic series for $\De_{h(\Ga)-1} \cap K$ with a compatible generating subset given by $\{ \xi_i^{t_i}\}_{i=1}^{h(\Ga) - 1}$. We also have that $\pi_{\De_{h(\Ga)-1}}(K)$ is a finite index subgroup of $\Ga / \De_{h(\Ga) - 1}$. Thus, there exists a natural number $t_{h(\Ga)}$ such that $K / \De_{h(\Ga) - 1} \cong \innp{\pi_{\De_{h(\Ga) - 1}}(\xi_{h(\Ga)}^{t_{h(\Ga)}})}$. If we set $H_{h(\Ga)} \cong \innp{H_{h(\Ga) - 1}, \xi_{h(\Ga)}^{t_{h(\Ga)}}}$, then the groups $\{H_i\}_{i=1}^{h(\Ga)}$ form a cyclic series for $K$ with a compatible generating subset given by $\{\xi_i^{t_i}\}_{i=1}^{h(\Ga)}$.
\end{proof}

We now apply Proposition \ref{cyclic_series_finite_index_subgroup} to give a description of the subgroups of $\Ga$ of the form $\Ga^m$ for $m \in \N$.
\begin{cor}\label{p_th_finite_index}
	Let $\{\Ga,\De_i,\xi_i\}$ be an admissible $3$-tuple such that $\Ga$ is torsion free, and let $m \in \N$. The subgroups $H_i = \innp{\xi_s^{m}}_{s=1}^i$ form a cyclic series for $\Ga^{m}$ with a compatible generating subset given by $\set{\xi_i^{m}}_{i=1}^{h(\Ga)}$. In particular, $|\Ga / \Ga^m| = m^{h(\Ga)}$.
\end{cor}
\begin{proof}
	Proposition \ref{cyclic_series_finite_index_subgroup} implies there exist natural numbers $\set{t_i}_{i=1}^{h(\Ga)}$ such that the subgroups $\set{H_i}_{i=1}^{h(\Ga)}$ given by $H_i = \innp{\xi_{s}^{t_s}}_{s=1}^i$ form a cyclic series for $\Ga^m$ with a compatible generating subset given by $\set{\xi_i^{t_i}}_{i=1}^{h(\Ga)}$. We observe that $(\Ga / \De_1)^m \cong (\Ga^m / \De_1)$. It is also evident that the series $\set{H_i / \De_1}_{i=2}^{h(\Ga)}$ is a cyclic series for $(\Ga / \De_1)^m$ with a compatible generating subset given by $\set{\pi_{\De_1}(\xi_i^{t_i})}_{i=2}^{h(\Ga)}$. Thus, the inductive hypothesis implies that $t_i = m$ for all $2 \leq i \leq h(\Ga)$. To finish, we observe that $\Ga^m \cap \De_1 \cong \De_1^m$.  Thus, $t_1 = m$ as desired. \end{proof}

Let $\Ga$ be a torsion free admissible group, and let $K \leq \Ga$ be a finite index subgroup. The following proposition allows us to understand how $K$ intersects a fixed choice of an admissible quotient of $\Ga$ with respect to a primitive central element.

\begin{prop}\label{subset_cyclic_series_intersection_finite_index}
	Let $\{\Ga, \La, \De_i, \xi_i \}$ be an admissible $4$-tuple. Let $K$ be a finite index subgroup of $\Ga$. There exist indices $1 \leq i_1 < i_2 < \cdots  i_\ell \leq h(\Ga)$ with natural numbers $\{t_s\}_{s=1}^\ell$ such that the subgroups $H_s = \innp{\xi_{i_j}^{t_j}}_{j=1}^s$ form a cyclic series for $K \cap \La$ with a compatible generating subset  $\{\xi_{i_s}^{t_s}\}_{s=1}^\ell$.
\end{prop}
\begin{proof}
	 By assumption, the cyclic series $\set{\De_i}_{i=1}^{h(\Ga)}$ and compatible generating subset $\set{\xi_i}_{i=1}^{h(\Ga)}$ satisfy the  conditions of Proposition \ref{minimal_quotient_cyclic_series} for $\La$. Thus, there exists indices $1 \leq i_1 < i_2 < \cdots  i_\ell$ such that the subgroups $M_j = \innp{\xi_{i_s}}_{s=1}^j$ form a cyclic series for $\La$ with a compatible generating subset given by $\set{\xi_{i_s}}_{s=1}^{h(\La)}$. Applying Proposition \ref{cyclic_series_finite_index_subgroup} to the admissible $3$-tuple $\set{\Ga,\De_i,\xi_i}$, we have that there exist natural numbers $\{t_i\}_{i=1}^{h(\Ga)}$ such that the subgroups given by $W_i = \innp{\xi_s^{t_s}}_{s=1}^i$ form a cyclic series for $K$ with a compatible generating subset given by $\set{\xi_i^{t_i}}_{i=1}^{h(\Ga)}$. Since $K \cap \La$ is a finite index subgroup of $\La$, the groups given by $H_{s} = \innp{\xi_{i_j}^{t_{i_j}}}_{j=1}^s$ form the desired cyclic series for $K \cap \La$ with a compatible generating subset given by $\{\xi_{i_s}^{t_{i_s}}\}_{s=1}^\ell$. Therefore, $\{t_{i_s}\}_{s=1}^{\ell}$ are the desired integers.
\end{proof}

\subsection{Reduction of Complexity for Residual Finiteness}
We first demonstrate that we may assume that $\Ga$ is torsion free when calculating $\Farb_{\Ga}(n)$.
\begin{prop}\label{torsion_reduction}
	Let $\Ga$ be an infinite admissible group. Then $\Farb_{\Ga}(n) \approx \Farb_{\Ga / T(\Ga)}(n)$.
\end{prop}
\begin{proof}
	We proceed by induction on $|T(\Ga)|$, and observe that the base case is evident. Thus, we may assume that $|T(\Ga)| > 1$. Note that $\map{\pi_{Z(T(\Ga))}}{\Ga}{\Ga / Z(T(\Ga))}$ is surjective and that $\ker (\pi_{Z(T(\Ga))}) = Z(T(\Ga))$ is a finite central subgroup. Since admissible groups are linear, \cite[Lem 2.4]{BK} implies that $\Farb_{\Ga}(n) \approx \Farb_{\Ga / T(Z(\Ga))}(n)$. Since $(\Ga / Z(T(\Ga))) / T(\Ga / Z(T(\Ga))) \cong \Ga / T(\Ga)$, the inductive hypothesis implies that $\Farb_{\Ga}(n) \approx \Farb_{\Ga / T(\Ga)}(n)$.
\end{proof} 

The following proposition implies that we may pass to a choice of a maximal admissible quotient of $\Ga$ when computing the lower bounds of $\Farb_{\Ga}(n)$ when $\Ga$ is a torsion free admissible group.

\begin{prop}\label{reduction_one_dim_word}
	Let $\set{\Ga, \La, \De_i, \xi_i}$ be an admissible $4$-tuple. If $\map{\varphi}{\Ga}{Q}$ is a surjective homomorphism to a finite group, then $\varphi(\xi_1^{m}) \neq 1$ if and only if $\pi_{\varphi(\La)}(\varphi(\xi_1^m)) \neq 1$ where $m \in \N$.
\end{prop} 
\begin{proof}
	If $\La \cong \set{1}$, then there is nothing to prove. Thus, we may assume that $\La \ncong \set{1}$. Proposition \ref{minimal_quotient_cyclic_series} implies that $\xi_1 \notin \La$ and that there exists a collection of elements of the Mal'tsev basis $\{\xi_{i_s} \}_{s=1}^\ell$ such that $\La \cong \innp{\xi_{i_s}}_{s=1}^{h(\La)}$. Moreover, we have that $H_s = \innp{\xi_{i_t}}_{t=1}^s$ is cyclic series for $\La$ with a compatible generating subset given by $\{\xi_{i_s}\}_{s=1}^\ell$. 	Proposition \ref{subset_cyclic_series_intersection_finite_index} implies there exist $\{t_{s}\}_{s=1}^\ell \subseteq \N$ such that the series of subgroups $\{W_s\}_{s=1}^{h(\La)}$ given by $W_s = \innp{\xi_{i_{j}}^{t_j}}_{j=1}^s$ forms a cyclic series for $\ker(\varphi) \cap \La$ with a compatible generating subset given by $\{\xi_{i_s}^{t_{s}}\}_{s=1}^{h(\La)}$
	
	Since the backwards direction is clear, we proceed with forward direction. To be more specific, we demonstrate that if $\varphi(\xi_1^m) \neq 1$, then $\pi_{\varphi(\La)}(\xi_1^m) \neq 1$. We proceed by induction on $|\varphi(\La)|$, and observe that the base case is clear. Thus, we may assume that $|\varphi(\La)| > 1$. In order to apply the inductive hypothesis, we find a non-trivial normal subgroup $M \leq Z(Q)$ such that $\varphi(\xi_1^m) \notin M$. 
	
	We first observe that if $\varphi(\xi_{i_0}) \neq 1$ for some $i_0 \in \set{2, \cdots, h(Z(\Ga))}$,  we may set $M = \innp{\varphi(\xi_{i_0})}$. It is straightforward to see that $M \neq \set{1}$ and that $\varphi(z) \notin M$. Thus, we may assume that $\xi_i \in \ker(\varphi)$ for $i \in \set{2, \cdots, h(Z(\Ga))}$.
	
	In this next paragraph, we prove that there exists an element of the compatible generating subset, say $\xi_{i_{0}}$, such that $\xi_{i_0} \in \La$, $\xi_{i_0} \notin \ker(\varphi)$, and $\varphi(\xi_{i_0}) \in Z(Q)$. To that end, we note that if $|t_{i_s}| = 1$, then $\xi_{i_s} \in \ker(\varphi)$. Since $|\varphi(\La)| > 1$, the set  $E = \{\xi_{i_s} \: | \: |t_{i_s}| \neq 1 \}$ is non empty. Given that $E$ is a finite set, there exists $\xi_{i_{s_0}} \in E$ such that $\Height(\xi_{i_{s_0}}) = \text{min}\{\Height(\xi_{i_s}) \: | \: \xi_{i_s} \in E \}$. We claim that $\varphi(\xi_{i_{s_0}})$ is central in $Q$, and since we are assuming that $\varphi(\xi_i) = 1$ for $i \in \set{2, \cdots, h(Z(\Ga))}$, we may assume that $\Height(\xi_{i_{s_0}}) > 1$. Since $\Height([\xi_{i_{s_0}},\xi_t]) < \Height(\xi_{i_{s_0}})$ for any $\xi_t$ and that $\varphi(\La) \nsub Q$, it follows $[\varphi(\xi_{i_{s_0}}), \varphi(\xi_t)] \in \varphi(\La)$. Thus, $[\varphi(\xi_{i_{s_0}}), \varphi(\xi_t)]$ is a product of $\varphi(\xi_{i_{s_j}})$ where $\Height(\xi_{i_{s_j}}) < \Height(\xi_{i_{s_0}})$. Since $\xi_{i_{s_j}} \in \varphi(\La)$ and  $\Height(\xi_{i_{s_j}}) < \Height(\xi_{i_{s_0}})$, the definition of $E$ and the choice of $\xi_{i_0}$ implies that $t_{i_{s_j}} = 1$. Thus, $\xi_{i_{s_j}} \in \ker(\varphi)$, and subsequently, $\varphi(\xi_{i_{s_j}}) = 1$. Hence, $[\varphi(\xi_{i_{s_0}}), \varphi(\xi_t)] =1$, and thus, $\varphi(\xi_{i_0}) \in Z(Q) - \set{1}$.
	
	Since $\varphi(\xi_{i_{s_0}})$ is central in $Q$, the group $M = \innp{\varphi(\xi_{i_0})}$ is a normal subgroup of $Q$. By selection, $\varphi(\xi_1^m) \notin M$, and since $|\pi_{M}( \varphi(\La))| < |\varphi(\La)|$, we may apply the inductive hypothesis to the surjective homomorphism $\map{\pi_M \circ \varphi}{\Ga}{Q / M}$. Letting $N = \pi_M \circ \varphi(\La)$, we have that $\pi_N(\pi_M( \varphi(\xi_1^m))) \neq 1$. Thus, $\pi_{\varphi(\La)}(\xi_1^m) \neq 1$. \end{proof}

\subsection{Rank and Step Estimates}
\begin{defn} Let $\{\Ga, \De_i, \xi_i \}$ be an admissible $3$-tuple such that $\Ga$ is an infinite admissible group of step size $c$, and let $\ell \leq h(\Ga)$. Let $\vec{a} = (a_i)_{i=1}^{\ell}$ where $1 \leq a_i \leq h(\Ga)$ for all $i$. We write $[\xi_{\vec{a}}] = [\xi_{a_1}, \hdots, \xi_{a_\ell}].$ We call $[\xi_{\vec{a}}]$ a \emph{simple commutator of weight $\ell$ with respect to $\vec{a}$}. Let $W_k(\Ga)$ be the set of non-trivial simple commutators of weight $k$. Since $\Ga$ is a nilpotent group of step size $c$, $W_k(\Ga)$ is an empty set for $k \geq c + 1$.  Thus, the set of non-trivial simple commutators of any weight, denoted as $W(\Ga)$, is finite.  \end{defn}

When considering a surjective homomorphism to a finite group $\map{\varphi}{\Ga}{Q}$, we need to ensure that the step length of $Q$ is equal to the step length of $\Ga$. We do that by assuming that $\varphi([\xi_{\vec{a}}]) \neq 1$ for all $[\xi_{\vec{a}}] \in W(\Ga) \cap Z(\Ga)$.
\begin{prop}\label{step_preservation}
	Let $\{\Ga,\De_i,\xi_i\}$ be an admissible $3$-tuple, and assume that $\Ga$ is torsion free. Let $\map{\varphi}{\Ga}{Q}$ be a surjective homomorphism to a finite group such that if $[\xi_{\vec{a}}] \in W(\Ga) \cap Z(\Ga)$, then $\varphi([\xi_{\vec{a}}]) \neq 1$. Then $\varphi([\xi_{\vec{a}}]) \neq 1$ for all $[\xi_{\vec{a}}] \in W(\Ga)$. If $c_1$ and $c_2$ are the step sizes of $\Ga$ and $Q$, respectively, then $c_1 = c_2$.
\end{prop}
\begin{proof}
We first demonstrate that $\varphi([\xi_{\vec{a}}]) \neq 1$ for all $[\xi_{\vec{a}}] \in W(\Ga)$ by induction on $\Height([\xi_{\vec{a}}])$. Observe that if $[\xi_{\vec{a}}] \in W_k(\Ga)$, then $\Height([\xi_{\vec{a}}]) = c(\Ga) - k + 1$. Thus, if $[\xi_{\vec{a}}] \in W_{c(\Ga)}(\Ga)$, then $\Height([\xi_{\vec{a}}]) = 1$. Hence, the base case follows from assumption.

Now consider $[\xi_{\vec{a}}] \in W(\Ga)$ where $\Height([\xi_{\vec{a}}]) = \ell > 1$. If $[\xi_{\vec{a}}] \in Z(\Ga)$, then the assumptions of the proposition imply $\varphi([\xi_{\vec{a}}]) \neq 1$. Thus, we may assume there exists an element $\xi_{i_0}$ of the Mal'tsev basis such that $[[\xi_{\vec{a}}],\xi_{i_0}] \neq 1$. The induction hypothesis implies that  $\varphi([[\xi_{\vec{a}}],\xi_{i_0}]) \neq 1$ since $[[\xi_{\vec{a}}],\xi_{i_0}]$ is a simple commutator of $\Height([[\xi_{\vec{a}}],\xi_{i_0}]) \leq \ell - 1$. Thus, $\varphi([\xi_{\vec{a}}]) \neq 1$. Therefore, for each $[\xi_{\vec{a}}] \in W(\Ga)$, it follows that $\varphi([\xi_{\vec{a}}]) \neq 1$.

If $c_1 < c_2$, then $\varphi$ factors through $\Ga / \Ga_{c_1}$, and subsequently $W_{c_1} \subseteq \ker (\varphi)$. Since $W_{c_1} \subseteq W(\Ga) \cap Z(\Ga)$, we have a contradiction, and subsequently, $c_1 = c_2$.
\end{proof}

For a surjective homomorphism to a finite $p$-group $\map{\varphi}{\Ga}{Q}$, the following proposition gives conditions for $|Q| \geq p^{h(\Ga)}$. To be more specific, if $\varphi$ is an injective map when restricted to the set of central simple commutators and is an injective map when restricted to a central elements of a fixed compatible generating subset, then $\varphi$ is an injection when restricted to that same compatible generating subset.

\begin{prop}\label{rank_preserving_bound}
Let $\set{\Ga,\De_i,\xi_i}$ be an admissible $3$-tuple such that $\Ga$ is torsion free. Let $\map{\varphi}{\Ga}{Q}$ be a surjective homomorphism to a finite $p$-group. Suppose that  $\varphi([\xi_{\vec{a}}]) \neq 1$ for all $[\xi_{\vec{a}}] \in W(\Ga) \cap Z(\Ga)$. Also, suppose that $\varphi (\xi_i) \neq 1$ for $\xi_i \in Z(\Ga)$ and $\varphi (\xi_i) \neq \varphi(\xi_j)$ for $\xi_i,\xi_j \in Z(\Ga)$ where $i \neq j$. 
Then $\varphi(\xi_i) \neq 1$ for $1 \leq i \leq h(\Ga)$ and $\varphi(\xi_i) \neq \varphi(\xi_j)$ for $1 \leq j_1 < j_2 \leq h(\Ga)$.
\end{prop}
\begin{proof}
Let $\xi_{j_1} \notin Z(\Ga)$. By selection, there exists $\xi_{j_2}$ such that $[\xi_{j_1},\xi_{j_2}] \neq 1$. Since $[\xi_{j_1},\xi_{j_2}]$ is a simple commutator of weight $2$, Proposition \ref{step_preservation} implies that $\varphi([\xi_{j_1}, \xi_{j_2}]) \neq 1$. Thus, $\varphi(\xi_{j_1}) \neq 1$.

We now demonstrate that $\varphi(\xi_i) \neq \varphi(\xi_j)$ for all $i < j$. Assume for a contradiction that $\varphi(\xi_i) = \varphi(\xi_j)$. That implies $\xi_i \: \xi_j^{-1} \in \ker(\varphi)$. Since $\ker(\varphi)$ is a normal finite index subgroup, Proposition \ref{cyclic_series_finite_index_subgroup} implies that there exist natural numbers $\set{t_i}_{i=1}^{h(\Ga)}$ such that the subgroups $\set{H_i}_{i=1}^{h(\Ga)}$ given by $H_i \cong \innp{\xi_s^{t_s}}_{s=1}^i$ form a cyclic series for $\ker(\varphi)$ with a compatible generating subset given by $\set{\xi_i^{t_i}}_{i=1}^{h(\Ga)}$. Since $Q$ is a $p$-group and $\xi_i \notin \ker(\varphi)$ for all $i$, each $t_i$ must be a non-zero power of $p$. We may write $\xi_i \: \xi_j^{-1} = \prod_{\ell = 1}^{h(\Ga)} \xi_\ell^{s_\ell \: t_\ell}$ for $s_\ell \in \Z$. We observe that $s_i \: t_i$ is divisible by $p$ for all $i$, and since each element can be represented uniquely in terms of its Mal'tsev coordinates, it follows that $s_\ell = 0$ for all $\ell \neq i,j$. Thus, we may write $\xi_i \: \xi_j^{-1} = \xi_i^{s_i \: t_i} \: \xi_j^{s_j \: t_j}$. That implies $1 = s_i \: t_i$ and $-1 = s_j \: t_j$. Since $p$ divides $s_i \: t_i$, we have a contradiction. Thus, $\varphi(\xi_i) \neq \varphi(\xi_j)$.

Thus, $\set{\varphi(\xi_i)}_{i=1}^{h(\Ga)}$ is a generating subset of $Q$ where $\text{Ord}_Q(\varphi(\xi_i)) \geq p$ for all $i$. \cite[Thm 1.10]{Hall_notes} implies that $|Q|$ divides some power of $p^{h(\Ga)}$. Hence, $|Q| \geq p^{h(\Ga)}$.\end{proof}

The following definition will be important in the proofs of Theorem \ref{word_precise} and Theorem \ref{lower}.
\begin{defn}
	Let $\set{\Ga, \De_i, \xi_i}$ be an admissible $3$-tuple such that $\Ga$ is torsion free and $h(Z(\Ga)) = 1$. For $[\xi_{\vec{a}}] \in W(\Ga) \cap Z(\Ga)$, we let $k\pr{[\xi_{\vec{a}}]}$ satisfy $\xi_1^{k\pr{[\xi_{\vec{a}}]}} = [\xi_{\vec{a}}]$. 
	Let $\Bas(\Ga) = \lcm\{|k\pr{[\xi_{\vec{a}}]}| \: | \: [\xi_{\vec{a}}] \in W(\Ga) \cap Z(\Ga) \}.$
\end{defn}

\begin{prop}\label{p_group_lower_bounds}
	Let $\{\Ga,\De_i,\xi_i\}$ be a admissible $3$-tuple where $\Ga$ is torsion free and $h(Z(\Ga)) = 1$.  Suppose $\map{\varphi}{\Ga}{Q}$ is a surjective homomorphism to a finite $p$-group such that $p > \Bas(\Ga)$, and suppose that $\varphi(\xi_1) \neq 1$. Then $c_1 = c_2$, $Z(Q) = \innp{\varphi(\xi_1)}$ and $|Q| \geq p^{h(\Ga)}$ where $c_1$ and $c_2$ are the step lengths of $Q$ and $\Ga$, respectively.
\end{prop}
\begin{proof}
	Since $\varphi(\xi_1) \neq 1$ and $Q$ is $p$-group, we have $\text{Ord}_Q(\varphi(\xi_1)) \geq p$. We claim that if $[\xi_{\vec{a}}] \in W(\Ga) \cap Z(\Ga)$, then $\varphi([\xi_{\vec{a}}]) \neq 1$. Suppose for a contradiction that $\varphi([\xi_{\vec{a}}]) = 1$ for some $[\xi_{\vec{a}}] \in W(\Ga) \cap Z(\Ga)$. Since $\varphi(\xi_1^{\Bas(\Ga)})$ is a power of $\varphi([\xi_{\vec{a}}])$ by definition, we have $\varphi(\xi_1^{\Bas(\Ga)}) = 1$. Thus, $\varphi(\xi_1)$ has order strictly less than $p$ which is a contradiction. 
	
	Since $\varphi([\xi_{\vec{a}}]) \neq 1$ for $[\xi_{\vec{a}}] \in W(\Ga) \cap Z(\Ga)$, Proposition \ref{step_preservation} implies that $c_1 = c_2$. On the other hand, Proposition \ref{rank_preserving_bound} implies that $\varphi(\xi_i) \neq 1$ for all $1 \leq i \leq h(\Ga)$ and $\varphi(\xi_{j_1}) \neq \varphi(\xi_{j_2})$ for all $1 \leq j_1 < j_2 \leq h(\Ga)$. Thus, $\{\varphi(\De_i)\}_{i=1}^{h(\Ga)}$ is a cyclic series for $Q$ and $\{\varphi(\xi_i)\}_{i=1}^{h(\Ga)}$ is a compatible generating subset for $Q$. 	Since $Q$ is a $p$-group, $|\varphi(\De_{i}) : \varphi(\De_{i-1})| \geq p$ for each $1 \leq i \leq h(\Ga)$ with the convention that $\De_0 = \{1\}$. Hence, the  second paragraph after \cite[Defn 8.2]{computational_group} implies $|Q| = \prod_{i=1}^{h(\Ga)}|\De_i : \De_{i-1}| \geq p^{h(\Ga)}.$
	
	We finish by demonstrating $Z(Q) = \innp{\varphi(\xi_1)}$. Since $\{\varphi(\De_i) \}_{i=1}^{h(\Ga)}$ is an ascending central series that is a refinement of the upper central series, there exists $i_0$ such that $\varphi(\De_{i_0}) = Z(Q)$. For $t>1$, there exists $j \neq t$ such that $[\xi_t,\xi_j] \neq 1$. Since $[\xi_t,\xi_j]$ is a simple commutator of weight $2$, Proposition \ref{step_preservation} implies that $\varphi([\xi_t,\xi_j]) \neq 1$. Given that $\varphi([\xi_t,\xi_j]) = [\varphi(\xi_t),\varphi(\xi_j)]$, it follows $\varphi(\xi_t) \notin Z(Q)$. That implies $\varphi(\De_t) \gneq Z(Q)$ for all $t>1$. Hence, $Z(Q) = \innp{\varphi(\xi_1)}$.
\end{proof}

If $\set{\Ga,\La,\De_i,\xi_i}$ is an admissible $4$-tuple with a surjective homomorphism to a finite group $\map{\varphi}{\Ga}{Q}$ and $m \in \Z - \set{0}$, then the following proposition gives conditions such that $Q$ has no proper quotients in which $\varphi(\xi_1^m) \neq 1$.
\begin{prop}\label{alternative}
	Let $\set{\Ga,\La, \De_i,\xi_i}$ be an admissible $4$-tuple. Suppose that $\map{\varphi}{\Ga}{Q}$ is a surjective homomorphism to a finite $p$-group where $\varphi(\La) \cong \set{1}$, $p > B(\Ga / \La)$, and $|Q| \leq p^{\psi_{\text{RF}}(\Ga)}$. If $\varphi(\xi_1^m) \neq 1$ for some $m \in \Z$, then $|Q| = p^{\psi_{\text{RF}}(\Ga)}$. Additionally, if $N$ is a proper quotient of $Q$, then $\rho(\varphi(\xi_1^m)) = 1$ where $\map{\rho}{Q}{N}$ is the natural projection. Finally, $Z(Q) \cong \Z / p \Z$.
\end{prop}
\begin{proof} 
	Let us first demonstrate that $|Q| = p^{\psi_{\text{RF}}(\Ga)}$. Since $\La \leq \ker(\varphi)$, we have an induced homomorphism $\map{\widetilde{\varphi}}{\Ga / \La}{Q}$ such that $\widetilde{\varphi}\circ \pi_{\La} = \varphi$. Since $\map{\widetilde{\varphi}}{\Ga / \La}{Q}$ is a surjective homomorphism to a finite $p$-group where $p > \Bas(\Ga / \La)$, $h(Z(\Ga / \La)) = 1$, and $\varphi(\xi_1) \neq 1$, Proposition \ref{p_group_lower_bounds} implies that $\varphi(Z(\Ga / \La)) \cong Z(Q)$ and $|Q| \geq p^{\psi_{\text{RF}}(\Ga)}$. Therefore, $|Q| = p^{\psi_{\text{RF}}(\Ga)}$.
	
	We now demonstrate that $Z(Q) \cong \Z / p\Z$. Since $\varphi(\De_i / \La)$ is a cyclic series for $Q$ with a compatible generating given by $\set{\varphi(\xi_i) \: | \: \xi_i \notin \La}$, it follows that $|Q| = \prod_{\xi_i \notin \La}\text{Ord}_Q(\varphi(\xi_i))$ (see the second paragraph after \cite[Defn 8.2]{computational_group}). Thus, we must have that $\text{Ord}_Q(\varphi(\xi_i)) \leq p$. Since $\text{Ord}_Q(\varphi(\xi_1)) \geq p$. we have $\text{Ord}_Q(\varphi(\xi_1)) = p$. Since $Z(Q) \cong \innp{\varphi(\xi_1)}$, it follows that $Z(Q) \cong \Z / p \Z$.
	
	Since $Z(Q) \cong \Z / p\Z$, there are no proper, non-trivial subgroups of $Z(Q)$. Given that $\ker(\rho) \nsub Q$, we have $Z(Q) \cap \ker(\rho) = Z(Q)$; hence, $\rho(\varphi(\xi_i^m)) = 1$ because $\varphi(\xi_1^m) \in Z(Q) \leq \ker(\rho)$.
\end{proof} \section{Some Examples of Precise Residual Finiteness}
To demonstrate the techniques used in the proof of Theorem \ref{word_precise}, we make a precise calculation of $\Farb_{\Heis_{2m+1}(\Z)}(n)$ where $\Heis_{2m+1}(\Z)$ is the $(2m+1)$-dimensional integral Heisenberg group.
\subsection{Integral Heisenberg Group Basics}\label{integral_heisen_basics}
We start by introducing basic facts about the $(2m+1)$-dimensional integral Heisenberg group which will be useful in the calculation of $\Farb_{\Heis_{2m+1}(\Z)}(n)$ and in Section \ref{Heisenberg_conjugacy_section}. We may write
\[
\Heis_{2m + 1}(\mathbb{Z}) = \set{ \left. \begin{pmatrix}
	1 & \vec{x} & z \\
	\vec{0} & \textbf{I}_m & \vec{y} \\
	0 & \vec{0} & 1
	\end{pmatrix}
	\right| z \in \mathbb{Z},  \: \: \vec{x},\vec{y}^T \in \mathbb{Z}^m }
\]
where $\textbf{I}_m$ is the $m \times m$ identity matrix. If $\gamma \in \Heis_{2k+1}(\mathbb{Z})$, we write
\[
\gamma = 
\begin{pmatrix}
1 & \vec{x}_{\gamma} & z_{\gamma} \\
\vec{0} & \textbf{I}_m & \vec{y}_{\gamma} \\
0 & \vec{0} & 1
\end{pmatrix}
\]
where $\vec{x}_{\gamma} = [x_{\gamma,1}, \hdots, x_{\gamma,m}]$ and $\vec{y}_{\gamma}^{T} = [y_{\gamma,1}, \hdots, y_{\gamma,m}]$. 

We let $E = \set{\vec{e}_i}_{i=1}^m$ be the standard basis of $\mathbb{Z}^m$ and then choose a generating subset for $\Heis_{2m+1}(\Z)$ given by $S = \set{\alpha_1,\dots,\alpha_m,\beta_1,\dots,\beta_m,\lambda}$ where
\[
\alpha_i  =
\begin{pmatrix}
1 & \vec{e}_i & 0 \\
\vec{0} & \mathbf{I}_m & \vec{0} \\
0 & \vec{0} & 1
\end{pmatrix}, \:
\beta_i =
\begin{pmatrix}
1 & \vec{0} & 0 \\
\vec{0} & \textbf{I}_m & \vec{e}_i^T \\
0 & \vec{0} & 1
\end{pmatrix},
\text{ and }
\lambda =
\begin{pmatrix}
1 & \vec{0} & 1 \\
\vec{0} & \textbf{I}_m & \vec{0} \\
0 & \vec{0} & 1
\end{pmatrix}.
\]
Thus, if $\gamma \in B_{\Heis_{2m+1}(\mathbb{Z}),S} (n)$, then $\vec{x}_{\gamma}, \vec{x}_{\eta}, \vec{y}_{\gamma}^{T}, \vec{y}_{\eta}^{T} \in B_{\mathbb{Z}^m, E} (C_0 \: n)$ and $|z_{\gamma}| \leq C_0 \: n^{2}$ for some $C_0 \in \N$ \cite[3.B2]{asymptotic_group}.  Lastly, we obtain a finite presentation for $\Heis_{2m+1}(\Z)$ written as 
\begin{equation}\label{presentation_heisen} \Heis_{2m+1}(\Z) = \innp{\kappa, \mu_{i},\nu_{j} \text{ for } 1 \leq i,j \leq m \: | \: [\mu_t,\nu_t]= \kappa \text{ for } 1 \leq t \leq m} 
\end{equation} with all other commutators being trivial. 

\subsection{Residual Finiteness of $\Heis_{2m+1}(\Z)$}\label{Heisen_cyclic_series}
Since the upper and lower asymptotic bounds for $\Farb_{\Heis_{2m+1}(\Z)}(n)$ require different strategies, we approach them separately. We start with the upper bound as it is more straight forward. 

Before we begin with the upper bound, we collect some basic facts. We take presentation given in Equation \ref{presentation_heisen} with  $S = \{\mu_i,\nu_j,\kappa \: | \: 1 \leq i,j\leq m\}$. Let $\De_1 = \innp{\kappa}$, $\De_{i} = \innp{\set{\kappa} \cup \set{\mu_s}}_{s=1}^{t-1}$ for $2 \leq i \leq m+1$, and $\De_i =\innp{\set{\De_{m+1}}, \set{\nu_t}_{t=1}^{i-m-1}}$ for $m+2 \leq i \leq 2m+1$. One can see that $\{\De_i\}_{i=1}^{2m+1}$ is a cyclic series for $\Heis_{2m+1}(\Z)$ and that $S$ is a compatible generating subset.
\begin{prop}\label{heisenberg_word_upper}
	$\Farb_{\Heis_{2m+1}(\Z)}(n) \preceq (\log(n))^{2m+1}$.
\end{prop}
\begin{proof}
	Let $\|\ga\|_S \leq n$. We seek to construct a surjective homomorphism $\map{\varphi}{\Heis_{2m+1}(\Z)}{Q}$ to a finite group such that $\varphi(\ga) \neq 1$. Moreover, we want to construct this finite group such that $|Q| \leq C_0 (\log(C_0 \: n))^{2m+1}$ for some $C_0 > 0$.
	
	Via the Mal'tsev basis, we may write $\ga = \kappa^{\al} \pr{\prod_{i=1}^{m} \mu_i^{\beta_i}} \pr{\prod_{j=1}^m \nu_j^{\la_j}}.$ We proceed based on whether $\ga$ has a trivial image in the abelianization or not.
	
	Suppose that $\pi_{\text{ab}}(\ga) \neq 1$.	Since $\ga \neq 1$, either $\be_i \neq 0$ for some $i$, or $\la_j \neq 0$ for some $j$. Without loss of generality, we may assume that there exists some $i_0$ such that $\be_{i_0} \neq 0$. The Prime Number Theorem \cite[1.2]{tenenbaum} implies there exists a prime $p$ such that $p \nmid |\be_{i_0}|$ and where $p \leq C_2 \log(C_2 \: |\be_{i_0}|) \leq C_2 \log(C_1 \: C_2 \: n^2)$. Consider the map $\map{\rho}{\Heis_{2m+1}(\Z)}{\Z / p \Z}$ given by
	
	$$\kappa^{\al} \pr{\prod_{i=1}^{m} \mu_i^{\beta_i}} \pr{\prod_{j=1}^m \nu_j^{\la_j}} \longrightarrow (\beta_1,\cdots,\beta_m,\la_1,\cdots,\la_m) \longrightarrow \be_{i_0} \longrightarrow \be_{i_0} \:  ( \text{ mod } \: p \:).$$ 
	
	Here, the first arrow is the abelianization map, the second arrow is the projection to the $\beta_{i_0}$ coordinate, and the last arrow is the natural projection from $\Z$ to $\Z / p \Z$. By construction, $\rho(\ga) \neq 1$ and $|\Z / p \Z| \leq C_1 \: C_2 \log(C_1 \: C_2 \: n^2)$. Thus, $\Depth_{\Heis_{2m+1}(\Z)}(\ga) \leq C_3 \log(C_3 \: n)$ for some $C_3 > 0$.
	
	Now suppose that $\pi_{\text{ab}}(\ga) = 1$. That implies $\be_i,\la_j = 0$ for all $i,j$. As before, the Prime Number Theorem \cite[1.2]{tenenbaum} implies there exists a prime $p$ such that $p \nmid |\al|$ and $p \leq C_4 \: \log(C_4 \: n)$ for some  $C_4 \in \N$. We have that $\sigma_p(\ga) = \sigma_p(\kappa^{\al_1}) \neq 1$. The second paragraph after \cite[Defn 8.2]{computational_group} implies that $|\Heis_{2m+1}(\Z) / (\Heis_{2m+1}(\Z))^p| = p^{2m+1}$ since $|\De_{i+1} : \De_i| = p$ for $1 \leq i \leq 2m+1$. Hence, $|\Heis_{2m+1}(\Z) / (\Heis_{2m+1}(\Z))^p| \leq (C_4)^{2m+1} \: (\log(C_4 \: n))^{2m+1}.$ Thus, $\Depth_{\Heis_{2m+1}(\Z)}(\ga) \leq C_4 (\log(C_4 \: n))^{2m+1}$, and therefore, $\Farb_{\Heis_{2m+1}(\Z)}(n) \preceq \pr{\log(n)}^{2m+1}.$
\end{proof}

\begin{prop}
	$(\log(n))^{2m+1} \preceq \Farb_{\Heis_{2m+1}(\Z)}(n)$.
\end{prop}
\begin{proof}
	In order to demonstrate that $(\log(n))^{2m+1} \preceq \Farb_{\Heis_{2m+1}(\Z)}(n)$, we construct a sequence of elements $\{\ga_i\}$ such that $C_1(\log(C_1 \: \|\ga_i\|_S))^{2m+1} \leq \Depth_{\Heis_{2m+1}(\Z)}(\ga_i)$ for some $C_1 > 0$. The proof of Proposition \ref{heisenberg_word_upper} implies that when $\ga \notin Z(\Heis_{2m+1}(\Z))$ that $\Depth_{\Heis_{2m+1}(\Z)}(\ga) \leq  C_1 \: \log(C_1 \: \|\ga\|_S)$ for some $C_1 \in \N$. Thus, the elements we are looking for will be central elements.
	
	Let $\{p_i\}$ be an enumeration of the primes, and let $\al_i = (\lcm\{1,2,\cdots,p_i - 1\})^{2m+2}$. We claim for all $i$ that $\D_{\Heis_{2m+1}(\Z)}(\kappa^\al_i) \approx \log(\|\kappa^{\al_i}\|_S))^{2m+1}$. It is clear that $\bar{\kappa}^{\al_i} \neq 1$ in $ \Heis_{2m+1}(\Z)/(\Heis_{2m+1}(\Z))^{p_i}$. \cite[3.B2]{asymptotic_group} implies that $\|\kappa^{\al_i}\|_S \approx \sqrt{|\al_i|}$, and the Prime Number Theorem \cite[1.2]{tenenbaum} implies that $\log(|\al_i|) \approx p_i$. Subsequently, $\log(\|\kappa^{\al_i}\|_S) \approx p_i$, and thus, $(\log(\|\kappa^{\al_i}\|_S))^{2m+1} \approx p_i^{2m+1}$. Given that $|\Heis_{2m + 1}(\Z) / \pr{\Heis_{2m + 1}(\Z)}^{p_i}| = p_i^{2m+1}$, we establish that $(\log(\|\ga_i\|_S))^{2m+1} \approx \D_{\Heis_{2m+1}(\Z)}(\ga_i)$ by demonstrating for all surjective homomorphisms $\map{\varphi}{\Heis_{2m+1}(\Z)}{Q}$ to finite groups  satisfying $|Q| < p_i^{2m+1}$ that $\varphi(\kappa)^{\al_i} = 1$.
	
	\cite[Thm 2.7]{Hall_notes} implies that we may assume that $|Q| = q^\beta$ where $q$ is a prime.
	Since $\varphi(\kappa^\al_i) = 1$ when $\varphi(\kappa) =  1$, we may assume that $\varphi(\kappa) \neq 1$. Since $[\mu_i, \nu_i] = \kappa$ for all $i$, it follows that $\varphi(\nu_i),\varphi(\mu_j) \neq 1$ for all $i,j$ and that $|Q| \geq q^{2m+1}$ (see the second paragraph after \cite[Defn 8.2]{computational_group}). 
	
	Suppose $Q$ is a $p_i$-group. If $\varphi(\ga_i) \neq 1$, then Proposition \ref{alternative} implies that $|Q| = p_i^{2m+1}$ and that there are no proper quotients of $Q$ where the image of $\varphi(\ga_i)$ does not vanish. In particular, there are no proper quotients of $\Heis_{2m + 1}(\Z) / (\Heis_{2m + 1}(\Z))^{p_i}$ where $\sigma_{p_i}(\ga_{i})$ does not vanish. Thus, we may assume that $q \neq p_i$.

 If $q > p_i$, then we have $\text{Ord}_Q(\varphi(\nu_i)),\text{Ord}_Q(\varphi(\mu_j)) \geq p_i$ for all $i,j$. That implies $|\De_i : \De_{i-1}| > p_i$. Thus, the second paragraph after \cite[Defn 8.2]{computational_group} implies that $|Q| > p_i^{2m+1}$; hence, we may disregard this possibility. We now assume that $Q$ is a $q$-group where $q < p_i$. If $q^\beta < p$, then $|Q| \mid \al_i$. Since the order of an element of a finite group divides the order of the group, we have $\la \mid \al_i$ where $\la = \text{Ord}_{Q}(\varphi(\kappa))$. Thus, $\varphi(\ga_i) = 1$. 
	
	Hence, we may assume that $Q$ is a $q$-group where $q < p_i$ and $p_i < q^\beta < p_i^{2m+1}$. There exists $v$ such that $q^{(2m+1)v} < p_i^{2m+1} < p^{(2m+1)(v+1)}$. Thus, we may write $\beta = vt + r$ where $t \leq 2m+1$ and $0 \leq r < t$. By construction, $q^{(2m+1)t + r} \leq \al_i$, and since $q < p_i$, it follows that $q^{(2m+1)t + r} \mid \al_i$. Subsequently, $\la \mid \al_i$ and $\varphi(\kappa^{\al_i}) = 1$ as desired.
\end{proof}
\begin{cor}
	Let $\Heis_{2m+1}(\Z)$ be the integral Heisenberg group. Then $\Farb_{\Heis_{2m+1}(\Z)}(n) \approx (\log(n))^{2m+1}$.
\end{cor}

\section{Proof of Theorem \ref{word_precise}} 
Our goal for Theorem \ref{word_precise} is to demonstrate that $\Farb_{\Ga}(n) \approx (\log (n))^{\psi_{\text{RF}}(\Ga)}$. Proposition \ref{torsion_reduction} implies that we may assume that $\Ga$ is torsion free. We proceed with the proofs of the upper and lower asymptotic bounds for $\Farb_{\Ga}(n)$ separately since they require different strategies. We start with the upper bound as its proof is simpler. 

For the upper bound, our task is to prove for any non-identity element $\ga \in \Ga$ there exists a surjective homomorphism to a finite group $\map{\varphi}{\Ga}{Q}$ such that $\varphi (\ga) \neq 1$ and $|Q| \leq C_0 \: (\log(C_0 \: \|\ga\|_S))^{\psi_{\text{RF}}(\Ga)}$ for some $C_0 \in \N$. When $\ga \notin \sqrt[Z(\Ga)]{\Ga_{c(\Ga)}}$, we pass to the quotient given by $\Ga / \sqrt[Z(\Ga)]{\Ga_{c(\Ga)}}$ and then appeal to induction on step length. Otherwise, for $\ga \in \sqrt[Z(\Ga)]{\Ga_{c(\Ga)}}$, we find a choice of an admissible quotient of $\Ga$ with respect to some primitive central element in which $\ga$ has a non-trivial image.

\begin{prop}\label{word_tf_upper_bound}
	Let $\Ga$ be a torsion free admissible group. Then $\Farb_{\Ga}(n) \preceq (\log(n))^{\psi_{\text{RF}}(\Ga)}$.
\end{prop}
\begin{proof}
Let $\set{\De_i}_{i=1}^{h(\Ga)}$ be a cyclic series with a compatible generating subset $\set{\xi_i}_{i=1}^{h(\Ga)}$. By assumption, there exist integers $\set{t_i}_{i=1}^{h(\Ga_c)}$ such that$\set{\xi_i^{t_i}}_{i=1}^{h(\Ga_c)}$ is a basis for $\Ga_c$ and there exist $a_i \in \Ga_{c-1}$ and $b_i \in \Ga$ such that $\xi_i^{t_i} =[a_i,b_i]$ for $1 \leq i \leq h(\Ga_c)$.

Suppose $\ga \in \Ga$ such that $\|\ga\|_S \leq n$. Using the Mal'tsev coordinates of $\ga$, we may write $\ga = \prod_{i=1}^{h(\Ga)}\xi_{i}^{\al_i}$, and Lemma \ref{coord_bound} implies that $|\al_i| \leq C_1 \: n^{c(\Ga)}$ for some $C_1 \in \N$ for all $i$. We construct a surjective homomorphism $\map{\varphi}{\Ga}{Q}$ to a finite group where $\varphi(\ga) \neq 1$ and $|Q| \leq C_2 \: (\|\ga\|_S)^{\psi_{\text{RF}}(\Ga)}$ for some $C_2 > 0$.

Letting $M = \sqrt[Z(\Ga)]{\Ga_{c(\Ga)}}$, suppose that $\pi_{M}(\ga) \neq 1$. Passing to the group $\Ga / M$, the inductive hypothesis implies there exists a surjective homomorphism $\map{\varphi}{\Ga /M}{Q}$ such that $\varphi(\bar{\ga}) \neq 1$, and  $\Depth_\Ga(\ga) \leq C_3 \: (\log(C_3 \:  n))^{\psi_{\text{RF}}(\Ga / M)}$ for some $C_3 \in \N$. Proposition \ref{induction_prec_rf} implies that $\psi_{\text{RF}}(\Ga / M) \leq \psi_{\text{RF}}(\Ga)$, and thus, $\D_\Ga(\ga) \leq C_3 \pr{\log(C_3 \: n)}^{\psi_{\text{RF}}(\Ga)}$.

Otherwise, we may assume that $\ga \in M$. Therefore, we may write $\ga = \prod_{i=1}^{h(\Ga_c)}\xi_i^{\al_i}$, and since $\ga \neq 1$, there exists $1 \leq j \leq h(\Ga_c)$ such that $\al_j \neq 0$.  The Prime Number Theorem \cite[1.2]{tenenbaum} implies that there exists a prime $p$ such that $p \nmid |\al_j|$ and $p \leq C_4 \: \log (C_4 \: |\al_j|)$ for some $C_4 \in \N$. If $\Ga / \La_j$ is a choice of an admissible quotient with respect to $\xi_j$, then $\bar{\ga} \neq 1$ in $\Ga / \La_j \cdot \Ga^p$. Corollary \ref{p_th_finite_index} implies $|\Ga / \La_j \cdot \Ga^p| \leq C_4^{h(\Ga / \La_j)} \cdot \pr{\log (C_4 \: |\al_j|)}^{h(\Ga / \La_j)}$. Proposition \ref{cyclic_series_maximal_one_dimensional_center} implies that $h(\Ga / \La_j) \leq \psi_{\text{RF}}(\Ga)$. Thus, we have $\D_\Ga(\ga) \leq C_5 \:  (\log(C_5 \: n))^{\psi_{\text{RF}}( \Ga)}$ for some $C_5 \in \N$. Hence, $\Farb_{\Ga}(n) \preceq \pr{\log(n)}^{\psi_{\text{RF}}( \Ga)}$.
\end{proof}

In order to demonstrate that $(\log (n) )^{\psi_{\text{RF}}(\Ga)} \preceq \Farb_{\Ga}(n)$, we require a sequence of elements $\{\ga_j\} \subseteq \Ga$ such that $C_1 \: (\log(C_1 \: \|\ga_j\|_S))^{\psi_{\text{RF}}(\Ga)} \leq \Depth_{\Ga}(\ga_j)$ for some $C_1 \in \N$ independent of $j$.  That entails finding elements that are of high complexity with respect to residual finiteness, i.e. non-identity elements that have relatively short word length in comparison to the order of the minimal finite group required to separate them from the identity. 

\begin{prop}\label{word_tf_lower_bound}
	Let $\Ga$ be torsion free admissible group. Then $\pr{\log(n)}^{\psi_{\text{RF}}(\Ga)} \preceq \Farb_{\Ga}(n)$.\end{prop}
\begin{proof}
Let $\Ga / \La$ be a choice of a maximal admissible quotient of $\Ga$. There exists a $g \in Z(\Ga) - \set{1}$ such that $\Ga / \La$ is an admissible quotient with respect to $\ga$. Moreover, there exists a $k \in \Z- \set{0}$, $a \in \Ga_{c -1}$, and $b \in \Ga$ such that $g^k = [a,b]$. If $g$ is not trivial, then there exists a primitive $x_\La \in Z(\Ga)$ such that $x^s = g$ for some $s$. In particular, $x_\La$ is a primitive, central non-trivial element such that $x_\La^{s \: k} = [a,b]$.

Let $\set{\De_{i,\La}}_{t=1}^{h(\Ga)}$ be a choice of cyclic series with a compatible generating subset given by $\set{\xi_{i,\La}}_{i=1}^{h(\Ga)}$ that satisfy Proposition \ref{minimal_quotient_cyclic_series} for $\La$ such that $\xi_{1,\La} = x_\La$. Proposition \ref{minimal_quotient_cyclic_series} implies that $\Ga / \La$ is a choice of an admissible quotient with respect to $\xi_{1,\La}$. Let $\al_{j,\La} = (\lcm\{1, 2, \cdots, p_{j,\La} - 1\})^{\psi_{\text{RF}}(\Ga) + 1}$ where  $\{p_{j,\La}\}$ is an enumeration of primes greater than $\Bas(\Ga / \La)$. Letting $\ga_{j,\La} = \xi_1^{\al_{j,\La}}$, we claim that $\{\ga_{j,\La}\}$ is our desired sequence.

Before continuing, we make some remarks. The value $\Bas(\Ga / \La)$ depends on the choice of a maximal admissible quotient of $\Ga$. To be more specific, if $\Ga / \Omega$ is another choice of a maximal admissible quotient of $\Ga$, then, in general, $\Ga / \La \ncong \Ga / \Omega$, and subsequently, $\Bas(\Ga / \La) \neq \Bas(\Ga / \Omega)$. As a natural consequence, the sequence of elements $\set{\ga_{i,\La}}$ depends on the choice of a maximal admissible quotient of $\Ga$. However, we will demonstrate that the given construction will work for any choice of a maximal admissible quotient we take.

We claim for all $j$ that $\Depth_{\Ga}(\ga_{j,\La}) \approx (\log(p_{j,\La}))^{\psi_{\text{RF}}(\Ga)}$. It is evident that $\overline{\ga_{j,\La}} \neq 1$ in $\Ga / \La \cdot \Ga^{p_{j,\La}}$, and Proposition \ref{p_th_finite_index} implies that $|\Ga / \La \cdot \Ga^{p_{j,\La}}| = p_{j,\La}^{\psi_{\text{RF}}(\Ga)}$. To proceed, we demonstrate for all surjective homomorphisms $\map{\varphi}{\Ga}{Q}$  to finite groups where $|Q| < p_{j,\La}^{\psi_{\text{RF}}(\Ga)}$ that $\varphi(\ga) = 1$.

\cite[Thm 2.7]{Hall_notes} implies that we may assume that $|Q| = q^\beta$ where $q$ is a prime. If $\xi_{1,\La}\in \ker(\varphi)$, then $\varphi(\ga_{j,\La}) = 1$. Hence, we may assume that $\varphi(\xi_{1,\La}) \neq 1$. Proposition \ref{reduction_one_dim_word} implies that $\varphi(\ga_{j,\La}) \neq 1$ if and only if $\pi_{\varphi(\La)}(\varphi(\ga_{j,\La})) \neq 1$. Thus, we may restrict our attention to surjective homomorphisms that factor through $\Ga / \La$, i.e. homomorphisms $\map{\varphi}{\Ga}{Q}$ where $\varphi(\La) \cong \set{1}$.

Suppose that $q = p_{j,\La}$. If $\varphi(\ga_{j,\La}) = 1$, then there is nothing to prove. So we may assume that $\varphi(\ga_{j,\La}) \neq 1$. Since $|Q| \leq p_{j,\La}^{\psi_{\text{RF}}(\Ga)}$, Proposition \ref{alternative} implies that $|Q| = p_{j,\La}^{\psi_{\text{RF}}(\Ga)}$ and that if $N$ is a proper quotient of $Q$ with natural projection given by $\map{\rho}{Q}{N}$, then $\rho(\varphi(\ga_{j,\La})) = 1$. We have two natural consequences. There are no proper quotients of $\Ga / \La \cdot \Ga^{p_{j,\La}}$ where $\varphi(\ga_{j,\La})$ has non-trivial image. Additionally, If $\map{\varphi}{\Ga}{Q}$ is a surjective homomorphism to a finite $p_{j,\La}$-group where $|Q| < p_{j,\La}^{\psi_{\text{RF}}(\Ga)}$ then $\varphi(\ga_{j,\La}) = 1$. Thus, we may assume that $q \neq p_{j,\La}$.

Suppose that $q > p_{j,\La}$. Since $\map{\widetilde{\varphi}}{\Ga / \La}{Q}$ is a surjective homomorphism to a finite $q$-group where $q > \Bas(\Ga / \La)$, Proposition \ref{p_group_lower_bounds} implies that $|Q| > p_{j,\La}^{\psi_{\text{RF}}(\Ga)}$. Hence, we may assume that $q < p_{j,\La}$.

Now suppose that $Q$ is a $q$-group where $|Q| < p_{j,\La}$. By selection, it follows that $|Q|$ divides $\al_{j,\La}$. Since the order of an element divides the order of the group, we have that $\text{Ord}_Q(\varphi(\xi_{1,\La}))$ divides $\al_{j,\La}$. That implies $\varphi(\ga_{j,\La}) = 1$. 

Now suppose that $Q$ is a $q$-group where $q < p_{j,\La}$ and $q^\beta > p_{j,\La}$. Thus, there exists $\nu \in \N$ such that $q^{\nu \: \psi_{\text{RF}}(\Ga)} < p_{j,\La}^{ \psi_{\text{RF}}(\Ga)} < q^{(\nu + 1) \: \: \psi_{\text{RF}}(\Ga)}$. Subsequently, we may write $\beta = \nu t + r$ where $t \leq \psi_{\text{RF}}(\Ga)$ and $0 \leq r < \nu$. By construction, $q^{vt + r} \leq \al_{j,\La}$, and since $q < p_{j,\La}$, it follows that $q^\beta = q^{vt + r} \mid \al_j$. Given that the order of any element in a finite group divides the order of the group, it follows that $\text{Ord}_Q(\varphi(\xi_{1,\La}))$ divides $\al_{j,\La}$. Thus, $\varphi (\ga_{\La,j}) = 1$, and therefore, $\Depth_\Ga (\ga_{j,\La}) = p_{j,\La}^{\psi_{\text{RF}}(\Ga)}$.

Since $\ga_{j,\La} \in \Ga_c$ where $c$ is the step length fo $\Ga$, \cite[3.B2]{asymptotic_group} implies $(\|\ga_{j,\La}\|_S) \approx (|\al_{j,\La}|)^{1/c}$, and the Prime Number Theorem \cite[1.2]{tenenbaum} implies $\log(|\al_{j,\La}|) \approx p_{j,\La}$. Hence, $\pr{\log(\|\ga_{j,\La}\|_S)}^{\psi_{\text{RF}}(\Ga)} \approx p_{j,\La}^{\psi_{\text{RF}}(\Ga)}.$ Thus, $\D_{\Ga}(\ga_{j,\La}) \approx \pr{\log(\|\ga_{j,\La}\|_S)}^{\psi_{\text{RF}}(\Ga)}$, and subsequently, $\pr{\log(n)}^{\psi_{\text{RF}}(\Ga)} \preceq \Farb_{\Ga}(n)$.
\end{proof}

\begin{proof}\textit{Theorem \ref{word_precise}}\\
Let $\Ga$ be an infinite admissible group. Proposition \ref{torsion_reduction} implies that $\Farb_{\Ga}(n) \approx \Farb_{\Ga/T(\Ga)}(n)$. Proposition \ref{word_tf_upper_bound} implies that $\Farb_{\Ga/T(\Ga)}(n) \preceq \pr{\log(n)}^{\psi_{\text{RF}}(\Ga)}$, and Proposition \ref{word_tf_lower_bound} implies $\pr{\log(n)}^{\psi_{\text{RF}}(\Ga)} \preceq \Farb_{\Ga/T(\Ga)}(n)$. Thus, $\Farb_{\Ga}(n) \approx \pr{\log(n)}^{\psi_{\text{RF}}(\Ga)}$.\end{proof}
\section{Cyclic series, lattices in nilpotent Lie groups, and Theorem \ref{maltsev_invariant}}
Let $\Ga$ be a torsion free admissible group, and let $G$ be its Mal'tsev completion. Let $\Ga / \La$ be a choice of a maximal admissible quotient. The main task of this section is the demonstration that the value $h(\Ga / \La)$ is a well-defined invariant of the Mal'tsev completion of $\Ga$. Thus, we need to establish some properties of cocompact lattices in admissible Lie groups. We start with the following lemma that relates the Hirsch lengths of centers of cocompact lattices within the same admissible Lie group.

\begin{lemma}\label{cocompact_center}
	Let $G$ be an admissible Lie group with two cocompact lattices $\Ga_1$ and $\Ga_2$. Then $\text{dim}(G) = h(Z(\Ga_1)) = h(Z(\Ga_2))$.
\end{lemma}
\begin{proof}
	This proof is a straightforward application of \cite[Lem 1.2.5]{Dekimpe}.
\end{proof}

We now introduce the notion of one parameter families of group elements of a Lie group.
\begin{defn}
	Let $G$ be a connected, simply connected Lie group. We call a map $\map{f}{\R}{G}$ a \emph{one parameter family of group elements of $G$} if $f$ is an injective group homomorphism from the real line with addition.
\end{defn}

Let $\set{\Ga, \De_i, \xi_i}$ be an admissible $3$-tuple such that $\Ga$ is torsion free. Via the exponential map and \cite[Lem 1.2.5]{Dekimpe}, the maps given by $f_{\Ga,i}(t) = \exp(t \: v_i)$ are one parameter families of group elements. The discussion below \cite[Thm 1.2.4 Pg 9]{Dekimpe} implies that we may uniquely write each $g \in G$ as $g = \prod_{i=1}^{h(\Ga)}f_{\Ga,i}(t_i)$ where $G$ is the Mal'tsev completion of $\Ga$. \begin{defn} We say that the one parameter families $f_{\Ga,i}$ are \emph{associated} to $\set{\Ga, \De_i, \xi_i}$. \end{defn} 
We characterize when a discrete subgroup of an admissible Lie group is a cocompact lattice based on how it intersects a collection of one parameter families of group elements.

\begin{prop}\label{lattice_characterization}
	Let $G$ be a $\Q$-defined admissible Lie group, and suppose that $\Ga$ is a discrete subgroup of $G$. Suppose there exists a collection of one parameter families of group elements of $G$, written as $\map{f_i}{\R}{G}$ for $1 \leq i \leq \text{dim}(G)$, such that $G$ is homeomorphic to $\prod_{i=1}^{\text{dim}(G)}f_i(\R)$. Then $\Ga$ is a cocompact lattice in $G$ if and only if $\Ga \cap f_i(\R) \cong \Z$ for all $i$.
\end{prop}
\begin{proof}
	Let $\map{\rho}{G}{G / \Ga}$ be the natural projection onto the space of cosets. Suppose that there exists $i_0$ such that $f_{i_0}(\R) \cap \Ga \ncong \Z$. Since $\Ga$ is discrete in $\Ga$, $\Ga \cap f_{i_0}(\R)$ is a discrete subset of $f_{i_0}(\R)$. Given that $\Ga \cap f_{i_0}(\R)$ is discrete and not infinite cyclic, we have $\Ga \cap f_{i_0}(\R) \cong \set{1}$. Hence, each element of the sequence $\{ f_{i_0}(t) \}_{t \in \N}$ projects to a unique element of $G / \Ga$. Thus, $\{ \rho(f_{i_0}(t))\}_{t\in \N}$ is an infinite sequence in $G / \Ga$ with no convergent subsequence. Hence, $\Ga$ is not a cocompact lattice of $G$
	
	Now suppose that $f_{i}(\R) \cap \Ga \cong \Z$ for all $i$. That implies for each $i \in \set{1, \cdots,  h(\Ga)}$ there exists $t_i > 0$ such that $\Ga \cap f_i(\R) \cong \{f_i(n \: t_i) \:| \: n \in \Z  \}$. Letting $E = \prod_{i=1}^{\text{dim}(G)}f_i([0,t_i])$, we claim that $E$ is compact and that $\rho(E) \cong G / \Ga$. 
	
	Let $\map{f}{\R^{\text{dim}(G)}}{G}$ be the continuous map given by $f(\pr{a_i,\cdots,a_{\text{dim}(G)}}) = \prod_{i=1}^{\text{dim}(G)}f_i(a_i)$. Since $\prod_{i=1}^{\text{dim}(G)}[0,t_i]$ is a closed and bounded subset of $\R^{\text{dim}(G)}$, the Heine-Borel theorem implies that $\prod_{i=1}^{\text{dim}(G)}[0,t_i]$ is compact. Since $f$ is continuous, $E$ is compact.
	
	We now claim that each coset of $\Ga$ in $G$ has a representative in $E$. Let $g = \prod_{i=1}^{\text{dim}(G)}f_i(\ell_i)$. For each $i$, there exists $s_i \in \Z$ such that $s_i \: t_i \leq \ell_i \leq (s_i + 1) \: t_i$. Let $k_i = \ell_i - s_i \: t_i$ and write $h \in E$ to be given by $h = \prod_{i=1}^{\text{dim}(G)}f(k_i)$. By construction, $\rho(h) = \rho(g)$, and subsequently, $\rho(g) \in \rho(E)$. Thus, $\rho(E) =  \rho(G)$. Since $G/ \Ga$ is the image of a compact set under a continuous map, $G/ \Ga$ is compact. \cite[Thm 2.1]{rag} implies that $\Ga$ is a cocompact lattice of $G$.
\end{proof}

These next two propositions give some structural information needed about the Mal'tsev completion of a torsion free admissible group and some structural information of choices of an admissible quotients with respect to some primitive, central non-trivial element.
\begin{prop}\label{one_dimensiona_center_completion_closed}
	Let $\Ga$ be a torsion free admissible group. Let $\ga \in Z(\Ga) -\set{1}$ be a primitive element, and let $\Ga / \La$ be a choice of an admissible quotient with respect to $\ga$. Suppose that $G$ is the Mal'tsev completion of $\Ga$, and let $H$ be the Mal'tsev completion of $\La$. Then $H$ is isomorphic to a closed, connected, normal subgroup of $G$.
\end{prop}
\begin{proof}
	Proposition \ref{minimal_quotient_cyclic_series} there exists a cyclic series $\set{\De_i}_{i=1}^{h(\Ga)}$ and compatible generating subset $\set{\xi_i}_{i=1}^{h(\Ga)}$ satisfying the following. There exists a subset $\{\xi_{i_s}\}_{s=1}^{h(\La)} \subseteq \set{\xi_{i}}_{i=1}^{h(\Ga)}$ such that if $W_s = \innp{\xi_{i_t}}_{t=1}^s$, then $\{W_{s}\}_{s=1}^{h(\La)}$ is a cyclic series for $\La$ with a compatible generating subset given by $\{\xi_{i_s}\}_{s=1}^{h(\La)}$ where $\xi_1 = \ga$. Let $\set{f_{\Ga,i}}_{i=1}^{h(\Ga)}$ be the one parameter families of group elements of $G$ associated to the admissible $3$-tuple  $\set{\Ga,\De_i,\xi_i}$.
	
	\cite[Thm 1.2.3]{Dekimpe} implies that we may view $H$ as a connected subgroup of $G$. We  proceed by induction on $h(\Ga)$ to demonstrate that $H$ is a closed and normal subgroup of $G$. If $h(\Ga) = 1$, then $\Ga = \Z$. It then follows that $G$ is Lie isomorphic to $\R$ and that $H \cong \{1\}$. Now our claim is evident. 
	
	Now suppose that $h(\Ga) > 1$. If $h(Z(\Ga)) = 1$, then we may take $\La = \set{1}$ which implies $H \cong \{1\}$. Thus, our claims are evident. Now suppose that $h(Z(\Ga)) > 1$. Let $\Omega = \innp{\xi_i}_{i=1}^{h(Z(\Ga))}$, and let $K$ be the Mal'tsev completion of $\Omega$. By \cite[Lem 1.2.5]{Dekimpe}, it follows that $K \leq Z(G)$. Thus, $K$ is a closed, connected, normal subgroup of $G$. 
	
	We claim that $H / K$ is Mal'tsev completion of $\pi_K(\La)$. We may write $H = \prod_{s=1}^{h(\La)} f_{\Ga,i_s}(\R).$ Since $\La$ is a cocompact lattice of $H$, Proposition \ref{lattice_characterization} implies that $\La \cap f_{\Ga,i_s}(\R) \cong \Z$ for all $1 \leq s \leq \ell$. By Proposition \ref{lattice_characterization} again, we have that $K \cap \La$ is a cocompact lattice of $K$. \cite[Prop 5.1.4]{greenleaf_corwin} implies that $\pi_K(\La)$ is a cocompact lattice in $H / K$.
	
	Observe that $\pi_K(\La) \cong \La / \Omega$. We have that $\La / \Omega$ satisfies Proposition \ref{one_dim_center} for $\pi_K(\xi_1)$. Thus, the inductive hypothesis implies that $H / K$ is a closed normal subgroup of $G / K$. Since $H$ isomorphic to the pullback of the closed normal subgroup of $G / K$, $H$ is a closed normal subgroup of $G$.
\end{proof}

The next proposition that $G$ is the Mal'tsev completion of $\Ga$ with a choice of an admissible quotient $\Ga / \La$ with respect to a primitive, central non-trivial element of $\Ga$, then the Mal'tsev completion of $H$ intersects any cocompact as a cocompact lattice.
\begin{prop}\label{completion_intersect_cocompact}
	Let $\Ga$ be a torsion free admissible group. Let $\ga \in Z(\Ga) - \set{1}$ be a primitive element, and let $\Ga / \La$ be a choice of an admissible quotient with respect to $\ga$, $G$ be the Mal'tsev completion of $\Ga$, and let $H \leq G$ be the Mal'tsev completion of $\La$. If $\Omega \leq G$ is another cocompact lattice of $G$, then $\Omega \cap H$ is a cocompact lattice of $H$.
\end{prop}
\begin{proof}
	Proposition \ref{minimal_quotient_cyclic_series} implies that there exists a cyclic series $\set{\De_i}_{i=1}^{h(\Ga)}$ and a compatible generating subset $\set{\xi_i}_{i=1}^{h(\Ga)}$ satisfying the following. There exists a subset $\set{\xi_{i_j}}_{j=1}^{h(\La)}$ such that the groups $\set{W_i}$ where $W_i \cong \innp{\xi_{i_j}}_{j=1}^i$ form a cyclic series for $\La$ with a compatible generating subset given by $\set{\xi_{i_j}}_{j=1}^{h(\La)}$. Let $\set{f_{i}}_{i=1}^{h(\Ga)}$ be the associated one parameter families of group elements of the Mal'tsev completion $G$ of $\Ga$. We may write $G \cong \prod_{i=1}^{h(\Ga)}f_{i}(\R)$. By construction, $H \cong \prod_{j=1}^{h(\La)}f_{i_j}(\R).$ Proposition \ref{lattice_characterization} implies that $\Omega \cap f_{i,\Ga} \cong \Z$ for all $i$.  In particular, $\Omega \cap f_{i_j,\Ga}(\R) \cong \Z$ for all $j$. Proposition \ref{lattice_characterization} implies that $\Omega \cap H$ is a cocompact lattice in $H$ as desired.
	\end{proof}

The following lemma demonstrates that you can select a cyclic series and compatible generating subset for a cocompact lattice in an admissible Lie group by intersecting the lattice with a collection of one parameter families of group elements.
\begin{lemma}\label{cyclic_series_intersect_completion}
	Let $G$ be a $\Q$-defined admissible Lie group with a cocompact lattice $\Ga$. Let $f_i$ be a collection of one parameter families of elements of $G$ such that $G$ is homeomorphic to $\prod_{i=1}^{\text{dim}(G)}f_i(\R)$. Let $\innp{\xi_i} \cong f_i(\R) \cap \Ga$. Then the groups given by $\De_i  = \innp{\xi_t}_{t=1}^i$ form a cyclic series for $\Ga$ with a compatible generating subset given by $\set{\xi_i}_{i=1}^{h(\Ga)}$.
\end{lemma}
\begin{proof}
We proceed by induction on the dimension of $G$. If $\text{dim}(G) = 1$, then our statement is evident. Now suppose that $\text{dim}(G) > 1$. If we let $H \cong \prod_{i=1}^{\text{dim}(G) - 1}f_i(\R)$, then Proposition \ref{lattice_characterization} implies that $\Ga \cap H$ is a cocompact lattice in $H$. The inductive hypothesis implies that the elements $\xi_i$ given by $\innp{\xi_i} \cong f_i(\R) \cap \Ga$ satisfy the following. The groups given by $\De_t = \innp{\xi_i}_{i=1}^t$ for $1 \leq t \leq \text{dim}(G) - 1$ form a cyclic series for $\Ga \cap H$ with a compatible generating subset given by $\set{\xi_i}_{i=1}^{\text{dim}(G) - 1}$. Since $\Ga$ is a cocompact lattice in $G$, Proposition \ref{lattice_characterization} implies that $f_{\text{dim}(G)}(\R) \cap \Ga \cong \Z$. Letting $\De_{\text{dim}(G)} = \innp{\De_{\text{dim}(G) - 1}, \xi_{\text{dim}(G)}}$, we have that the groups given by $\set{\De_i}_{i=1}^{\text{dim}(G)}$ form a cyclic series for $\Ga$ with a compatible generating subset given by $\set{\xi_i}_{i=1}^{\text{dim}(G)}$. \end{proof}

 Let $\Ga$ be a torsion free admissible group. We now demonstrate that the value $\psi_{\text{RF}}(\Ga)$ is a well-defined invariant of the Mal'tsev completion of $\Ga$.
\begin{prop}\label{same_growth_maltsev_completion}
	Let $G$ be a $\Q$-defined admissible Lie group, and suppose that $\Ga_1$ and $\Ga_2$ are two cocompact lattices of $G$. Then $\psi_{\text{RF}}(\Ga_1) = \psi_{\text{RF}}(\Ga_2)$.
\end{prop} 
\begin{proof}
	If $h(Z(\Ga_1)) = 1$, then Proposition \ref{cocompact_center} implies that $h(Z(\Ga_2)) = 1$. It then follows from the definition of $\psi_{\text{RF}}(\Ga_1)$ and $\psi_{\text{RF}}(\Ga_2)$ that $\psi_{\text{RF}}(\Ga_1) = h(\Ga) = \psi_{\text{RF}}(\Ga_2)$.
	
	Therefore, we may assume that $h(Z(\Ga_1)), h(Z(\Ga_2)) \geq 2$. In this case, we demonstrate the equality by showing that $\psi_{\text{RF}}(\Ga_1) \leq \psi_{\text{RF}}(\Ga_2)$ and $\psi_{\text{RF}}(\Ga_2) \leq \psi_{\text{RF}}(\Ga_1)$. 
	
	Let $G$ be the Mal'tsev completion of $G$. Let $\set{\De_i}_{i=1}^{h(\Ga)}$ be a cyclic series for $\Ga$ with a compatible generating subset $\set{\xi_i}_{i=1}^{h(\Ga)}$, and let $f_i$ be the associated one parameter families of group elements. We may write $G \cong \prod_{i=1}^{h(\Ga)}f_i(\R)$.
	
	Let $\{\eta_i\}_{i=1}^{h(\Ga_2)} \subseteq \Ga_2$ such that $\innp{\eta_i} \cong \Ga_2 \cap f_i(\R)$. If we let $W_i \cong \innp{\eta_j}_{j=1}^{i}$, then Proposition \ref{cyclic_series_intersect_completion} implies that $\set{W_i}_{i=1}^{h(\Ga_2)}$ is a cyclic series for $\Ga_2$ with a compatible generating subset given by $\set{\eta_i}_{i=1}^{h(\Ga_2)}$.  Let $\xi_i$ be a central element of the compatible generating subset of $\Ga$, and let $\Ga_1 / \La$ be a choice of an admissible quotient with respect to $\xi_i$. Let $H$ be the Mal'tsev completion of $\La$. Since $\pi_\La(\xi_i) \cong Z(\Ga / H)$, it is evident that $\innp{\pi_{H}(f_i(\R))} \cong Z(\Ga / H)$. In particular, $\pi_H(\eta_i) \neq 1$. Proposition \ref{completion_intersect_cocompact} implies that $H \cap \Omega$ is a compact lattice of $H$ and Proposition \ref{one_dimensiona_center_completion_closed} implies that $H$ is a closed connected normal subgroup of $G$. Thus, \cite[Prop 5.1.4]{greenleaf_corwin}  implies that $\pi_H(\Omega)$ is a compact lattice in $G / H$. Proposition \ref{cocompact_center} implies that $h(\Ga_2 / \La) \cong h(\pi_H(\Ga_2))$, it follows that $\pi_H(\Ga_2)$ satisfies the conditions of Proposition \ref{one_dim_center}. If we let $\Ga_2 / \Omega$ be a choice of an admissible quotient with respect to $\eta_i$, it follows that $h(\Ga / \Omega) \leq \pi_H(\Ga_2) \leq h(\Ga / \La)$. By Proposition \ref{cyclic_series_maximal_one_dimensional_center}, $h(\Ga_2 / \Omega) \leq \psi_{\text{RF}}(\Ga_1)$. 	\cite[Lem 1.2.5]{Dekimpe} implies that $\eta_i \in Z(\Ga_2)$, and thus, the above inequality holds for each element of the compatible generating subset of $\Ga_2$ in $Z(\Ga_2)$. Therefore, Proposition \ref{cyclic_series_maximal_one_dimensional_center} implies that $\psi_{\text{RF}}(\Ga_2) \leq \psi_{\text{RF}}(\Ga_1)$. By interchanging $\Ga_1$ and $\Ga_2$, we have $\psi_{\text{RF}}(\Ga_1) \leq \psi_{\text{RF}}(\Ga_2)$.\end{proof}

We now come to the main result of this section.
\begin{proof}\textit{Theorem \ref{maltsev_invariant}}

	Suppose that $\Ga_1$ and $\Ga_2$ are two infinite admissible groups such that $\Ga_1 / T(\Ga_1)$ and $\Ga_2 / T(\Ga_2)$ have isomorphic Mal'tsev completions. Proposition \ref{torsion_reduction} implies that $\Farb_{\Ga_1}(n) \approx \Farb_{\Ga_1/T(\Ga_1)}(n)$ and  $\Farb_{\Ga_2}(n) \approx \Farb_{\Ga_2 / T(\Ga_2)}(n)$. Theorem \ref{word_precise} implies $\Farb_{\Ga_1}(n) \approx \pr{\log(n)}^{\psi_{\text{RF}}(\Ga_1)}$ and $\Farb_{\Ga_2/T(\Ga_2)}(n) \approx \pr{\log(n)}^{\psi_{\text{RF}}(\Ga_2)}$. Proposition \ref{same_growth_maltsev_completion} implies $\psi_{\text{RF}}(\Ga_1 / T(\Ga_1)) = \psi_{\text{RF}}(\Ga_2 / T(\Ga_2))$. Thus, $\Farb_{\Ga_1}(n) \approx \Farb_{\Ga_2}(n)$.
\end{proof}

\section{Some Examples and the Proof of Theorem \ref{applications 2}}

\subsection{Free nilpotent groups and Theorem \ref{applications 2}(i)}
\begin{defn}
Let $\text{F}(X)$ be the free group of rank $m$ generated by $X$. We define $\Free(X,c,m) = \text{F}(X) / (\text{F}(X))^{c + 1}$ as \emph{the free nilpotent group of step size $c$ and rank $m$ on the set $X$}. 
\end{defn}

Following \cite[Sec 2.7]{NGA}, we construct a cyclic series for $\Free(X,c,m)$ and a compatible generating subset using iterated commutators in the set $X$. 
\begin{defn}\label{compatible_free}
We call elements of $X$ \emph{basic commutators of weight $1$ of $\Free(X,c,m)$}, and we choose an arbitrary linear order for weight $1$ basic commutators. If $\ga_1$ and $\ga_2$ are basic commutators of weight $i_1$ and $i_2$, respectively, then $[\ga_{1},\ga_2]$ is a \emph{basic commutator of weight $i_1 + i_2$ of $\Free(X,c,m)$} if $\ga_1 > \ga_2$ and if $\ga_1 = [\ga_{1,1},\ga_{1,2}]$ where $\ga_{1,1}$ and $\ga_{1,2}$ are basic commutators such that $\ga_{1,2} \leq \ga_2$.

Basic commutators of higher weight are greater with respect to the linear order than basic commutators of lower weight. Moreover, we choose an arbitrary linear order for commutators of the same weight.

For $x_{i_0} \in X$, we say that a $1$-fold commutator $\ga$  contains $x_{i_0}$ if $\ga = x_{i_0}$. Inductively, we say that a $j$-fold  commutator $[\ga_1,\ga_2]$ contains $x_{i_0}$ if either $\ga_1$ contains $x_{i_0}$ or $\ga_2$ contains $x_{i_0}$. 

Note that any basic commutator of weight greater or equal to $2$ must contain two distinct commutators of weight $1$ but not necessarily more than $2$. Additionally, if $\ga$ is a basic commutator of weight $k$, then $\ga$ can contain at most $k$ distinct basic commutators of weight $1$.
\end{defn}

It is well known that the number of basic commutators of $\Free(X,c,m)$ is equal to the Hirsch length of $\Free(X,c,m)$. Letting $\mu$ be the M\"{o}bius function, we may write $$h(\Free(X,c,m)) = \sum_{r=1}^c \pr{\frac{1}{r} \sum_{d | r} \mu(d) m^{\frac{r}{d}}}.$$
We label the basic commutators as $\{\xi_i\}_{i=1}^{h(\Free(X,c,m))}$ with respect to the given linear order. 

\begin{defn} One can see that the subgroup series $\{\De_i\}_{i=1}^{h(\Free(X,c,m))}$ where $\De_i = \innp{\xi_t}_{t=1}^i$ is a cyclic series for $\Free(X,c,m)$, and \cite[Cor 2.7.3]{NGA} implies that $\{ \xi_i \}_{i=1}^{h(\Free(X,c,m))}$ is a compatible generating subset. We call $\{\De_i\}_{i=1}^{h(\Free(X,c,m))}$ the \emph{cyclic series of basic commutators for $\Free(X,c,m)$} and $\{\xi_i\}_{i=1}^{h(\Free(X,c,m))}$ \emph{the compatible generating subset of basic commutators for $\Free(X,c,m)$.}
\end{defn}

\begin{prop}\label{extra_gen}
	Let $\Free(X,c,m)$ be the free nilpotent group of step size $c$ and rank $m$ on the set $X = \set{x_i}_{i=1}^{m}$.  Let $\ga$ be a basic commutator of weight $c$ in the set $X$ that contains only $Y \subsetneq X$. There exists a normal subgroup $\Omega$ such that $\Free(X,c,m)/ \Omega$ is torsion free where  $\innp{\pi_\Omega(\ga)} \cong Z(\Free(X,c,m)/ \Omega)$. Additionally, if $\eta$ is a $j$-fold commutator that contains elements of $X / Y$, then $\pi_\Omega(\eta) = 1$.
\end{prop}
\begin{proof}
	Let $\set{\De_i}_{i=1}^{h(\Free(X,c,m))}$ be the cyclic series of basic commutators, and let $\set{\xi_i}_{i=1}^{h(\Free(X,c,m))}$ be the compatible generating subset of basis commutators. By assumption, there exists an $i_0 \in \set{1, \cdots, h(Z(\Free(X,c,m)))}$ such that $\xi_{i_0} = \ga$. Without loss of generality, we may assume that $\xi_1 = \ga$. 
	
	We will construct a normal descending series $\set{K_t}_{t=1}^c$ such that $\Free(X,c,m) / K_t$ is torsion free for each $t$, $\pi_{K_t}(\xi_1) \neq 1$ for each $t$, and if $\eta$ is a $i$-fold commutator that contains only elements of $X / Y$ where $i \geq t$, then $\pi_{K_t}(\eta) = 1$. We will also have that $K_t$ is generated by basis commutators of weight greater than or equal to $t$, and finally, we will have $\innp{\pi_{K_1}(\xi_1)} \cong Z(\Free(X,c,m)/K_1)$. We proceed by induction on $t$. 
	
	Consider the subgroup given by $K_c = \innp{\xi_i}_{i=2}^{h(Z(\Free(X,c,m)))}$. Observe that if $\eta$ is a $c$-fold commutator such that $\eta$ contains only elements of $X/Y$, then it follows by construction that $\pi_{K_1}(\eta) = 1$. Thus, we have the base case.
	
	Now suppose that subgroup $K_t$ has been constructed for $t < c$, and let $\eta$ be a $(t-1)$-fold commutator bracket that contains elements of $X/Y$. It then follows that $[\eta,x_i]$ contains elements of $X/Y$. Thus, $\pi_{K_t}([\eta,x_i])  = 1$ by assumption. Since that is true for all $1 \leq i \leq m$, we have that $\pi_{K_t}(\eta) \in Z(\Free(X,c,m) / K_t)$. Let $W$ be the set of basic commutator brackets $\xi_i$ such that $\xi_i \neq \xi_1$ and $\pi_{K_t}(\xi_i)$ is central. By construction, $\pi_{K_t}(\xi_1) \notin \innp{\pi_{K_t}(W)}$ and if $\eta$ is a $\ell$-fold commutator bracket that contains elements of $X / Y$ where $\ell \geq t-1$, then $\pi_{K_t}(\eta) \in \innp{\pi_{K_t}(W)}$. We set $K_{t-1} \cong \innp{K_t,W}$, and suppose that $\eta$ is a $\ell$-fold commutator that contains elements of $X/Y$ and where $\ell \geq t-1$.  By construction, we have that $\pi_{K_{t-1}}(\eta) = 1$. Since $K_{t-1} \cong \pi_{K_t}^{-1}(\innp{\pi_{K_t}(W)})$, we have that $K_{t-1}$ is normal in $\Free(X,c,m)$ and $K_t \leq K_{t-1}$. Finally, it is evident that $\Free(X,c,m) / K_t$ is torsion free. Hence, induction gives the construction of $K_t$ for all $i$. Additionally, the construction of $K_1$ implies that $Z(\Free(X,c,m) / K_1) \cong \innp{\pi_{K_1}(\xi_1)}$. Thus, by taking $\Omega \cong K_1$, we have our proposition. \end{proof}

\begin{proof}\textit{Theorem \ref{applications 2}(i)}\\ 
Let $c \geq 1$ and $\ell \geq 2$, and let $X_\ell = \{x_i\}_{i=1}^\ell$. Let $\Free(X_\ell,c,\ell)$ to be the free nilpotent group of step size $c$ and rank $\ell$ on the set $X_\ell$. Theorem \ref{word_precise} implies that there exists a natural number $\psi_{\text{RF}}(\Free\pr{X_\ell,c,\ell})$ such that $\Farb_{\Free(X_\ell,c,\ell)}(n) \approx (\log(n))^{\psi_{\text{RF}}(\Free\pr{X_\ell,c,\ell})}$.  We will demonstrate that $(\log(n))^{\psi_{\text{RF}}(\Free\pr{X_\ell,c,\ell})} \preceq (\log(n))^{\psi_{\text{RF}}(\Free(X_c,c,c))}$ for each $\ell > c$, and since $\Free\pr{X_\ell,c,\ell}$ is a nilpotent group of step size $c$ and Hirsch length greater than $\ell$, we will have our desired result.

Let $\{\De_i\}_{i=1}^{h(\Free(X_\ell,c,\ell))}$ be the cyclic series of basic commutators and $\{ \xi_i \}_{i=1}^{h(\Free(X_\ell,c,\ell))}$ be the compatible generating subset of basic commutators for $\Free(X_\ell,c,\ell)$. For each $\xi_i \in Z(\Free(X_\ell,c,\ell))$, let $\Free(X_\ell,c,\ell) / \La_i$ be a choice of an admissible quotient with respect to $\xi_i$. Proposition \ref{cyclic_series_maximal_one_dimensional_center} implies there exists an $i_0 \in \set{1, \cdots, h(Z(\Free(X_\ell,c,\ell)))}$ such that $h(\Ga / \La_{i_0}) = \psi_{\text{RF}}(\Free(X_\ell,c,\ell))$.

For each $\xi_i \in Z(\Free(X_\ell,c,\ell))$, there exists a subset $Y_i \subseteq X$ such that $\xi_i$ is a basic commutator of weight $c$ that contains only elements of $Y_i$.  Proposition \ref{extra_gen} implies that there exists a subgroup $\Omega_i$ such that $\Free(X_\ell,c,\ell) / \Omega_i$ satisfies Proposition \ref{one_dim_center} with respect to $\xi_i$. Moreover, elements of $X - Y_i$ are contained in $\Omega_i$.

There is a natural surjective homomorphism $\map{\rho_i}{\Free(X_\ell,c,\ell)}{\Free(Y_i,c,|Y_i|)}$ given by sending elements of $X - Y_i$ to the identity. Therefore, we have an induced map $\map{\varphi}{\Free(Y_i,c,|Y_i|)}{\Free(X_\ell,c,\ell) / \Omega_i}$ such that $\pi_{\Omega_i} =  \varphi \circ \rho_i$. In particular, $\Free(X_\ell,c,\ell) / \Omega_i \cong \Free(Y_i,c,|Y_i|) / \rho_i(\Omega_i)$. Thus, $\Free(X_\ell,c,\ell) / \Omega_i$ satisfies the conditions of Proposition \ref{one_dim_center} for $\rho_i(\xi_i)$. Proposition \ref{cyclic_series_maximal_one_dimensional_center} implies that $h(\Free(X_\ell,c,\ell) / \La_i) \leq \psi_{\text{RF}}(\Free(Y_i,c,|Y_i|))$.
Since $\Free(X_\ell,c,\ell)$ has step size $c$, we have that $|Y_i| \leq c$ for any $\xi_i \in Z(\Free(X_\ell,c,\ell))$. Additionally, we have that $\Free(Y_i,c,|Y_i|) \cong \Free(X_j,c,j)$ when $|Y_i| = j$. In particular, $\psi_{\text{RF}}(\Free(Y_i,c,|Y_i|)) = \psi_{\text{RF}}(\Free(X_j,c,j))$. By setting $m(c) = \text{max}\{\psi_{\text{RF}}(\Free(X_j,c,j)) \: | \: 1 \leq j \leq c \}$, Proposition \ref{cyclic_series_maximal_one_dimensional_center}
implies $\Farb_{\Free(X_\ell,c,\ell)}(n) \preceq (\log(n))^{m(c)}$. \end{proof}

\subsection{Central products and applications}
The examples we contruct for Theorem \ref{applications 2}(ii), (iii) and (iv) arise as iterated central products of torsion free admissible groups whose centers have Hirsch length $1$. In the given context, Corollary \ref{cong_rf} allows us to compute the precise residually finiteness function in terms of the Hirsch length of the torsion free admissible groups of whom we take the central product.
\begin{defn}
Let $\Ga$ and $\De$ be finitely generated groups, and let $\map{\theta}{Z(\Ga)}{Z(\De)}$ be an isomorphism. We define the \emph{central product of $\Ga$ and $\De$ with respect to $\theta$} as $\Ga \circ_\theta \De = (\Ga \times \De) / K$ where $K = \{(z, \theta(z)^{-1}) \: | \: z \in Z(\Ga) \}$. We define the central product of the groups $\{\Ga_i\}_{i=1}^{\ell}$ with respect to the automorphism $\map{\theta_i}{Z(G_i)}{Z(G_{i+1})}$ for $1 \leq i \leq \ell -1$ inductively. Assuming that $(\Ga_i \circ_{\theta_i})_{i=1}^{\ell}$ has already been defined, we define $(\Ga_i \circ_{\theta_i})_{i=1}^{\ell}$ as the central product of $(\Ga_i \circ_{\theta_i})_{i=1}^{\ell -1}$ and $\Ga_\ell$ with respect to the induced isomorphisms $\map{\bar{\theta}_{\ell-1}}{Z((\Ga_i \circ_{\theta_i})_{i=1}^{\ell - 1})}{Z(\Ga_\ell)}$. When $\Ga = \Ga_i$ and $\theta = \theta_i$ for all $i$, we write the central product as $(\Ga \circ_\theta)_{i=1}^{\ell}$.
\end{defn}
Suppose that $\Ga \circ_{\theta} \De$ is a central product of two nilpotent groups. Since products and quotients of nilpotent groups are nilpotent, it follows that $\Ga \circ_\theta \De$ is a nilpotent group. However, the isomorphism type of $\Ga \circ_\theta \De$ is dependent on $\theta$.

\begin{prop}\label{central_product_growth}
Let $\{\Ga_i\}_{i=1}^{\ell}$ be a collection of torsion free admissible groups where $h(Z(\Ga_i)) = 1$ for all $i$. Let $Z(\Ga_i) = \innp{z_i}$, and let $\map{\theta_i}{Z(\Ga_i)}{Z(\Ga_{i+1})}$ be the isomorphism given by $\theta(z_i) = z_{i+1}$ for $1 \leq i \leq \ell -1$. Then $h((\Ga_i \circ_{\theta_i})_{i=1}^\ell) = \sum_{i=1}^{\ell }h(\Ga_i) - \ell + 1$ and $h(Z(\Ga_i \circ_{\theta_{i}})_{i-1}^\ell)) = 1$
\end{prop}
\begin{proof}
We may assume that $\ell =2$.  First note that if $\Ga$ is a torsion free admissible group with a normal subgroup $\De \nsub \Ga$ such that $\Ga / \De$ is torsion free, then $h(\Ga) = h(\De) + h(\Ga / \De)$. Observe that $\Ga_1 \circ_{\theta} \Ga_2 / Z(\Ga_1  \circ_{\theta} \Ga_2) \cong \Ga_1 / Z(\Ga_1) \times \Ga_2 / Z(\Ga_2)$. Since $h(Z(\Ga_1 \circ_{\theta} \Ga_2)) = 1$, we may write $h(\Ga_1 / Z(\Ga_1)) + h(\Ga_2 / Z(\Ga_2)) + 1 = h(\Ga_1 \circ_{\theta} \Ga_2)$. Thus, $h(\Ga_1 \circ_{\theta} \Ga_2) = h(\Ga_1) - 1 + h(\Ga_2) - 1 + 1 = h(\Ga_1) + h(\Ga_2) - 1$.
\end{proof}

\begin{defn}\label{filiform}
For $\ell \geq 3$, we define $\La_\ell$ to be the torsion free admissible group generated by the set $S_\ell =\{ x_i \}_{i=1}^{\ell}$ with relations consisting of commutator brackets of the form $[x_1,x_i] 
= x_{i+1}$ for $2 \leq i \leq \ell - 1$ and all other commutators being trivial.
\end{defn}

$\La_\ell$ is an example of a Filiform nilpotent group. It has Hirsch length $\ell$ and has step length $\ell-1$. Defining $\De_i = \innp{x_s}_{s=m-i+1}^\ell$, it follows that $\{ \De_i \}_{i=1}^\ell$ is a cyclic series for $\La_{\ell}$ and $\{\xi_i \}_{i=1}^{\ell}$ is a compatible generating subset where $\xi_{i} = x_{\ell - i +1}$. Additionally, $h(Z(\La_\ell)) = 1$.

\begin{proof}\textit{Theorem \ref{applications 2}(ii), (iii) and (iv)}\\
Assume that $\ell \geq 3$. By construction, $\La_\ell$ is a torsion free admissible group of Hirsch length $\ell$ such that $h(Z(\Ga_{\ell})) = 1$ . Corollary \ref{cong_rf} implies that $\Farb_{\La_m}(n) \approx (\log(n))^\ell$ which gives Theorem \ref{applications 2}(ii).

For $2 \leq c_1 < c_2$ and $\ell \geq 1$, there exist natural numbers $j_{\ell}$ and $\iota_{\ell}$ satisfying $(j_{\ell} -1 ) \: (c_1 + 1)  = \ell \: \lcm(c_1 + 1,c_2 + 1)$ and $(\iota_{\ell} -1 ) \: (c_2 + 1)  = \ell \: \lcm(c_1 + 1,c_2 + 1)$, respectively. Let $\Ga_{\ell} = (\La_i \circ_{\theta_\Ga})_{i=1}^{j_\ell}$ and $\De_{\ell} = (\La_i \circ_{\theta_\De})_{i=1}^{\iota_\ell}$ where $\map{\theta_\Ga}{Z(\La_{c_1+1})}{Z(\La_{c_1+1})}$ and $\map{\theta_\De}{Z(\La_{c_2+1})}{Z(\La_{c_2+1})}$ are the identity isomorphisms, respectively. Proposition \ref{central_product_growth} implies that $h(\Ga_\ell) = h(\De_\ell)$ and $h(Z(\Ga_{\ell})) = h(Z(\De_{\ell})) = 1$, and thus, Corollary \ref{cong_rf} implies that $\Farb_{\Ga_{\ell}}(n) \approx \Farb_{\De_{\ell}}(n)$.

Lastly, let $c > 1$ and $m \geq 1$, and consider the group $\Ga_{c \:m} = (\La_{c+1} \circ_{\theta})_{i=1}^{c \:m}$ with finite generating subset $S_{cm}$. Proposition \ref{central_product_growth} implies that $h(\Ga_{cm}) = c \: m^2 + c \:m - 1$, and since $c \: m^2 + c \: m - 1 \geq m$, Corollary \ref{cong_rf} implies that $(\log (n))^m \preceq \Farb_{\Ga_{c \: m}}(n)$ as desired.
\end{proof}

\section{A review of Blackburn and a proof of Theorem \ref{precise_heisenberg_calc}}\label{Heisenberg_conjugacy_section}
We start with a review of Blackburn's proof of conjugacy separability for infinite admissible groups. This section provides motivation for estimates in the following sections and how one obtains an upper bound for $\Conj_{\Gamma}(n)$.

Let $\set{\Ga,\De_i,\xi_i}$ be an admissible $3$-tuple such that $\Ga$ is torsion free, and let $\ga, \eta \in \Ga$ such that $\ga \nsim \eta$.  In order to construct a surjective homomorphism to a finite group that separates the conjugacy classes of $\ga$ and $\eta$, we proceed by induction on $h (\Ga )$. Since the base case is evident, we may assume that $h (\Gamma) > 1$. When $\pi_{\De_1}(\ga) \nsim \pi_{\De_1}(\eta)$, induction implies there exists a surjective homomorphism to a finite group $\map{\varphi}{\Ga}{Q}$ such that $\varphi(\ga) \nsim \varphi(\eta)$. Otherwise, we may assume that $\eta = \gamma \: \xi_1^{t}$ for some $t \in \Z - \{0\}$. The following integer is of particular importance.

\begin{defn} \label{important_map}
Let $\Gamma$ be a torsion free admissible group with a cyclic series $\set{\Delta_i}_{i=1}^{h (\Gamma)}$ and a compatible generating subset $\set{\xi_i}_{i=1}^{h(\Ga)}$. Let $\gamma \in \Gamma$.  If we let $\map{\varphi}{\pi_{\Delta_1}^{-1} (C_{\Gamma / \Delta_1}(\bar{\gamma}))}{\Delta_1}$ be given by $\varphi (\eta) = [  \gamma,\eta  ]$, we then define $\tau(\Ga, \De_i, \xi_i, \ga) = \tau(\ga) \in \N$ such that $\innp{ \xi_1^{\tau(\ga)}} \cong \text{Im}  (\varphi)$.
\end{defn}

We choose a prime power $p^\al$ such that $p^\al \mid \tau(\Ga, \De_i, \xi_i, \ga)$ and $p^\al \nmid t$. We then find a $w \in \N$ such that if $\alpha \geq w$, then for each $\gamma \in \Ga^{p^\alpha}$ there exists $\eta \in \Gamma$ satisfying $\eta^{p^{\alpha - w}} = \gamma$ (see \cite[Lem 2]{Blackburn}).  Consider the following definition (see \cite[Lem 3]{Blackburn}).

\begin{defn}\label{e_value}
Let $\set{\Ga, \De_i,\xi_i}$ be an admissible $3$-tuple such that $\Ga$ is torsion free, and let $\gamma \in \Gamma$. We define $e (\Gamma, \De_i,\xi_i,\gamma) = e(\ga) \in \N$ to be the smallest natural number such that if $\alpha \geq e(\ga)$, then $C_{\Ga / \Ga^{p^\alpha}} (\bar{\ga}) \subseteq \sigma_{p^\al}(C_\Ga (\ga) \cdot \Ga^{p^{\al - e(\ga)}}).$
\end{defn}

Letting $e'(\bar{\ga}) = e(\Gamma / \Delta_1,\De_i/\De_1, \bar{\xi_i}, \bar{\gamma})$, we set $\omega = \al + w + e'(\bar{\gamma})$. Blackburn then proves  that $\sigma_{p^\be}(\gamma) \nsim \sigma_{p^\be}(\eta)$ (see \S \ref{proof of upper} and \cite{Blackburn}). However, as a consequence of the choice of a cyclic series and a compatible generating subset, it becomes evident that the integer $w$ is unnecessary. When $\Ga$ has finite order elements, Blackburn inducts on $|T (\Ga)|$. Thus, it suffices to bound $p^{e(\Ga,\De_i,\xi_i,\ga)}$ and $\tau(\Ga,\De_i,\xi_i,\ga)$ in terms of $\|\ga\|_S$ and $\|\eta\|_S$. Following Blackburn's method, we calculate the asymptotic upper bound for $\Conj_{\Heis_{2m+1}(\Z)}(n)$. We then demonstrate that the  upper bound is sharp. 

Before starting, we make the following observations for $\Heis_{2m+1}(\Z)$. Using the cyclic series and compatible generating subset given in the second paragraph of Subsection \ref{Heisen_cyclic_series}, we have $\tau(\ga) = \tau(\Heis_{2m+1}(\Z),\De_i,\xi_i,\ga) = \GCD \{x_{\gamma,i}, y_{\gamma,j} | 1 \leq i,j\leq m \}.$ Thus, $\tau(\ga) \leq C_0 \|\ga\|_S$ for some $C_0 \in \N$. Moreover, via Subsection \ref{integral_heisen_basics} we may write the conjugacy class of $\ga$ as
\begin{equation}\label{heisenberg_conj} 
 \set{ \left. \begin{pmatrix} 1 & \vec{x}_{\gamma} & \tau(\ga) \: \beta + z_{\gamma} \\ \vec{0}  & \textbf{I}_m & \vec{y}_{\gamma} \\ 0 & \vec{0} & 1  \end{pmatrix} \right|  \beta \in \mathbb{Z}}.
\end{equation}
\begin{prop} \label{heisen_upper_1}
$\Conj_{\Heis_{2m+1} (\mathbb{Z})} (n) \preceq n^{2m + 1}$.
\end{prop}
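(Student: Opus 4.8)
The plan is to use the analysis of Blackburn's algorithm recalled above, specialized to $\Ga = H_{2k+1}(\Z)$, and reduce everything to explicit estimates on the two quantities $\eval{\tau}{\Ga,\ga}$ and $p^{\evali{e}{\Ga,\ga}}$ together with a count of how large the resulting finite quotient is. Since $H_{2k+1}(\Z)$ is torsion-free, there is no inductive reduction over $\abs{\eval{T}{\Ga}}$; the induction on Hirsch length collapses in one step because $\Ga / \De_1 \cong \Z^{2k}$ is abelian. So given a non-conjugate pair $(\ga,\eta) \in \eval{B^c_{\Ga,S}}{n}$, I would split into the two cases of the recalled argument. In the case $\eval{\pi_{\De_1}}{\ga} \nsim \eval{\pi_{\De_1}}{\eta}$, i.e. $\bar\ga \neq \bar\eta$ in $\Z^{2k}$, one separates them already in a finite quotient of $\Z^{2k}$: since $\vec x_\ga, \vec y_\ga, \vec x_\eta, \vec y_\eta \in B_{\Z^k,E}(C_0 n)$, there is a projection $r_m$ onto $\Z^{2k}/m\Z^{2k}$ with $m \leq 2C_0 n + 1$ distinguishing them, and composing with $\Ga \twoheadrightarrow \Z^{2k}$ gives a quotient of order $m^{2k} \preceq n^{2k}$, which is even better than claimed.

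The substantive case is $\eta \sim \ga\,\xi_1^t = \ga\,\lambda^t$ with $t \neq 0$; here the formula \eqref{heisenberg_conj} shows exactly that $p^\al \nmid t$ must hold for the prime power used, where by the explicit description $\eval{\tau}{\Ga,\ga} = \GCD\set{x_{\ga,i},y_{\ga,j}}$, and we already have $\eval{\tau}{\Ga,\ga} \leq C_0 n$ and $\abs{t} \leq \abs{z_\ga - z_\eta} \leq 2C_0 n^2$ (from $\abs{z_\ga},\abs{z_\eta}\le C_0n^2$). So one may choose a prime power $p^\al \mid \eval{\tau}{\Ga,\ga}$ with $p^\al \nmid t$ and $p^\al \leq \eval{\tau}{\Ga,\ga} \leq C_0 n$. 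Then, following the recalled discussion, one sets $\omega = \al + e(\Ga/\De_1,\bar\ga)$ — the extra $w$ being unnecessary for the chosen torsion-free central series — and the quotient is $\eval{r_{p^\omega}}{\Ga} = H_{2k+1}(\Z/p^\omega\Z)$, which has order $p^{(2k+1)\omega}$. To get the bound $n^{2k+1}$ I must therefore show $p^\omega \preceq n$, and since $p^\al \leq C_0 n$ this amounts to showing $p^{e(\Ga/\De_1,\bar\ga)}$ is bounded independently of $n$. But $\Ga/\De_1 = \Z^{2k}$ is abelian, so $\eval{C_{\Z^{2k}/p^\al\Z^{2k}}}{\bar\ga}$ is the whole group and $\eval{C_{\Z^{2k}}}{\ga} = \Z^{2k}$, whence the defining containment in Definition \ref{e_value} holds trivially already for $\al \geq 0$; thus $e(\Ga/\De_1,\bar\ga) = 0$ and $p^\omega = p^\al \leq C_0 n$. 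Hence $\abs{H_{2k+1}(\Z/p^\omega\Z)} = p^{(2k+1)\omega} \leq (C_0 n)^{2k+1}$, giving $\text{CD}_\Ga(\ga,\eta) \preceq n^{2k+1}$.

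The one remaining check is that this quotient really does separate the pair: one must verify that $\eval{r_{p^\omega}}{\ga}$ and $\eval{r_{p^\omega}}{\eta}$ are non-conjugate in $H_{2k+1}(\Z/p^\omega\Z)$. Using \eqref{heisenberg_conj} (which has an evident analogue mod $p^\omega$, with $\eval{\tau}{\Ga,\ga}$ replaced by its residue), $\eval{r_{p^\omega}}{\eta} = \eval{r_{p^\omega}}{\ga\,\lambda^t}$ is conjugate to $\eval{r_{p^\omega}}{\ga}$ iff $t$ lies in the subgroup of $\Z/p^\omega\Z$ generated by $\eval{\tau}{\Ga,\ga}$, i.e. iff $\GCD(\eval{\tau}{\Ga,\ga},p^\omega) \mid t$ in $\Z/p^\omega\Z$; since $p^\al \mid \eval{\tau}{\Ga,\ga}$ and $\al = \omega$, we have $p^\al \mid \GCD(\eval{\tau}{\Ga,\ga},p^\omega)$, and $p^\al \nmid t$ by the choice of $p^\al$, so they are not conjugate. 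Taking the maximum over all $(\ga,\eta)\in\eval{B^c_{\Ga,S}}{n}$ of the two case-bounds $n^{2k}$ and $(C_0n)^{2k+1}$ yields $\Conj_{H_{2k+1}(\Z)}(n) \preceq n^{2k+1}$. I expect the main obstacle to be purely bookkeeping: making sure the mod-$p^\omega$ version of the conjugacy description \eqref{heisenberg_conj} is stated cleanly and that the constants from Bass's theorem and the word-length-to-entry-size translation are assembled correctly; the conceptual content — that $e(\Ga/\De_1,\bar\ga)=0$ because $\Ga/\De_1$ is abelian, and $\eval{\tau}{\Ga,\ga}\preceq n$ — is already essentially contained in the material recalled above.
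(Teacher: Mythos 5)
Your argument follows the paper's proof in all essentials: handle the case where the images in the abelianization already differ, otherwise reduce to $\eta=\ga\,\la^t$, pick a prime power dividing $\eval{\tau}{\Ga,\ga}$ but not $t$, and verify non-conjugacy in $H_{2k+1}\pr{\Z/p^{\omega}\Z}$ directly from the explicit description of conjugacy classes. The one genuine issue is the degenerate case $\eval{\tau}{\Ga,\ga}=0$, i.e.\ $\ga$ central ($\vec{x}_\ga=\vec{y}_\ga=\vec{0}$), which your write-up does not cover: your prescription ``choose a prime power $p^{\al}\mid\eval{\tau}{\Ga,\ga}$ with $p^{\al}\le\eval{\tau}{\Ga,\ga}\le C_0 n$'' breaks down there, since every prime power divides $0$ but none satisfies $p^{\al}\le 0$, so the size bound you rely on for $p^{\omega}$ disappears. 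The paper treats this case separately: take $p$ to be any prime not dividing $t$; the Prime Number Theorem gives $p\le C\log\pr{C\abs{t}}\preceq\log n$, so the quotient $H_{2k+1}\pr{\Z/p\Z}$ has order $\preceq\pr{\log n}^{2k+1}$, still within the claimed bound. With that case added your proof is complete.

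Two smaller remarks. Your observation that $e\pr{\Ga/\De_1,\bar\ga}=0$ because $\Ga/\De_1\cong\Z^{2k}$ is abelian is correct but superfluous: like the paper, you end up verifying non-conjugacy modulo $p^{\omega}$ directly from the conjugacy-class description rather than through Proposition \ref{centralizer_prop}, so the $e$-term never enters. And in the abelianization case your congruence quotient of order $\preceq n^{2k}$ is coarser than the paper's logarithmic-size quotient obtained from \cite{BouRabee10}, but it is entirely sufficient for the stated $n^{2k+1}$ bound.
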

\begin{proof}
Let $\ga, \eta \in \Ga$ such that $\|\ga\|_S,\|\eta\|_S \leq n$ and $\ga \nsim \eta$. We need to construct a surjective homomorphism $\map{\varphi}{\Heis_{\text{2m+1}}(\Z)}{Q}$ to a finite group such that $\varphi (\ga) \nsim \varphi(\eta)$ and $|Q| \leq C \: n^{2m+1}$ for some $C \in \N$. We proceed based on whether $\ga$ and $\eta$ have equal images in $(\Heis_{2m + 1}(\Z))_{\text{ab}}$.  Corollary \ref{abelian_rf} (see also \cite[Cor 2.3]{BouRabee10}) implies that there exists a surjective homomorphism $\map{\varphi}{\Z^{2m}}{Q}$ such that $\varphi(\pi_{\text{ab}}(\ga \: \eta^{-1})) \neq 1$ and $|Q| \leq C_1 \: \log (C_1 \: n)$ for some $C_1 \in \N$.  Since $\ga$ and $\eta$ are non-equal central elements in $Q$, it follows that $\varphi (\pi_{\text{ab}} (\ga)) \nsim \varphi( \pi_{\text{ab}} (\eta))$, and thus, $\CDepth_{\Heis_{2m+1}(\Z)}(\ga,\eta) \leq C_1 \log (C_1 \: n)$.

Thus, we may assume that $\pi_{\text{ab}}(\ga) = \pi_{\text{ab}}(\eta)$. In particular, we may write $\eta = \ga \: \la^t$ where $|t| \leq C_0 \: n^2$. Let $p^\omega$ be a prime power that divides $\tau(\ga)$ but not $t$. We claim that $\sigma_{p^\omega}(\ga) \nsim \sigma_{p^\omega}(\ga \: \la^t)$, and for a contradiction, suppose otherwise. That implies there exists $x \in \Heis_{2m+1}(\Z)$ such that $\sigma_{p^\omega}([\ga, x]) = \sigma_{p^\omega}(\la^t)$. Equation (\ref{heisenberg_conj}) implies that $\modulo{z_\eta \in \set{\ell_\ga \: \beta + z_\ga : \beta \in \Z}}{p^\omega}.$ Therefore, there exist $a,b \in \Z$ such that $t = a \: \tau(\ga) + b \: p^\omega$. Thus, $p^\omega \mid t$, a contradiction. Hence, $\sigma_{p^\omega}(\ga) \nsim \sigma_{p^\omega}(\eta)$.

When $\tau(\ga) \neq 0$, we have that $p^\omega \leq \tau(\ga) \leq C_0 \: n$. Hence, $\CDepth_{\Heis_{2m+1}(\Z)}(\ga,\eta) \leq C_0^{2m+1}\: n^{2m+1}$. When $\tau(\ga) = 0$, the Prime Number Theorem \cite[1.2]{tenenbaum} implies that there exists a prime $p$ such that $p \nmid t$ where $p \leq C_2 \: \log(C_2 \: |t|)$ for some $C_2 \in \N$. Hence, $p  \leq  C_3 \: \log(C_3 \:  n)$ for some $C_3 \in \N$, and thus, $\CDepth_{\Heis_{2m+1}(\Z)}(\ga,\eta) \leq C_3 \: (\log (C_3 \: n))^{2m+1}$. Hence, $\Conj_{\Heis_{2m+1}(\Z)}(n) \preceq n^{2m+1}$.
\end{proof}

The following proposition finishes the proof of Theorem \ref{precise_heisenberg_calc}.
\begin{prop}\label{heisen_lower}
$n^{2m+1} \preceq  \Conj_{\Heis_{2m+1} (\mathbb{Z})} (n)$.
\end{prop}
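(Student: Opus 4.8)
The plan is to exhibit, for each $n$, an explicit non-conjugate pair $(\gamma_n, \eta_n)$ in the $n$-ball of $H_{2k+1}(\mathbb{Z})$ whose separation requires a quotient of order at least $C \, n^{2k+1}$, which by definition of $\Conj$ gives the claimed lower bound. The shape of the construction is dictated by the upper bound argument: the obstruction to separating $\gamma$ from $\gamma \lambda^t$ in a quotient is precisely the need to find a prime power $p^m \mid \tau(\Gamma,\gamma)$ with $p^m \nmid t$, so I want to force both $\tau(\Gamma,\gamma_n)$ to be as large as allowed (of size $\approx n$) and $t$ to be a number none of whose small prime-power divisors help — concretely, $t$ divisible by all prime powers below the target bound except the one we are forced to use. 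Equivalently, I want the smallest prime power witnessing non-conjugacy to be forced to be large, so that any quotient detecting it must itself be large.

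Concretely, I would take $\gamma_n$ with $\vec{x}_{\gamma_n} = (q_n, 0, \dots, 0)$, $\vec{y}_{\gamma_n} = 0$, $z_{\gamma_n} = 0$, where $q_n$ is chosen so that $\|\gamma_n\|_S \approx n$ forces $q_n \approx n$; then $\tau(\Gamma,\gamma_n) = q_n$. Set $\eta_n = \gamma_n \lambda^{t_n}$ with $t_n$ chosen so that $\|\eta_n\|_S \approx n$ (so $|t_n| \approx n^2$ is allowed) and so that for every prime power $p^m$ dividing $q_n$ we have $p^m \mid t_n$ as well — for instance $t_n$ a multiple of $q_n$ that is not itself a multiple of $q_n \cdot (\text{anything})$... wait, that would make them conjugate. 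The correct choice: I want $q_n$ to have a prime-power divisor $p^m$ not dividing $t_n$, but I want every \emph{small} such divisor to divide $t_n$, so that the minimal valid $p^m$ is forced up to size $\approx n$. So take $q_n = \operatorname{lcm}$ of all integers up to some threshold, or more simply $q_n = p_n$ a single large prime $\approx n$, and $t_n$ a number with $\|\eta_n\|_S \le n$ coprime to $p_n$; then the only prime power dividing $\tau = p_n$ and available is $p_n$ itself (and its higher powers, which are too big), so any quotient separating the pair must have order divisible by $p_n^{?}$ — here I need the extra powers of $n$.

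The extra factor $n^{2k+1} / n = n^{2k}$ comes from the abelianization: to even see the relevant quotient of $\Gamma / \Delta_1 = \mathbb{Z}^{2k}$ one needs, in addition to killing $\lambda$-coordinate mod $p_n$, that the image of $\vec{x}_{\gamma_n}$ be nonzero, and one should couple this with the Bou-Rabee lower bound $F_{\mathbb{Z}^{2k}}(n) \approx (\log n)^{2k}$... but that gives logs, not powers. I think instead the right mechanism is: choose $\gamma_n$ so that $z_{\gamma_n}$ (or the $\vec{x},\vec{y}$ entries) are products forcing $\tau(\Gamma,\gamma_n)$ and the relevant modulus to interact multiplicatively — i.e. take $q_n$ itself to be a prime $\approx n^{2k+1}$? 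No: word length caps $q_n$ at $\approx n$. The resolution must be that $t_n \approx n^2$ and one picks $t_n$ to be (a multiple of) a prime $\approx n^2$, with $\tau(\Gamma,\gamma_n)=0$, landing in the "$\tau = 0$" case where the Prime Number Theorem forces $p \approx \log n^2$ — again logs. So the power growth genuinely must come from building $\gamma_n$ so that $\tau(\Gamma,\gamma_n)$ is a highly composite number $\approx n$ and simultaneously the obstruction lives in a product of $2k+1$ independent coordinate-quotients. I would therefore use a Heisenberg-specific trick: take $\gamma_n$ with all of $x_{\gamma_n,i}, y_{\gamma_n,j}$ equal to distinct primes near $n$, forcing any quotient $\Gamma \to Q$ that separates $[\gamma_n]$ from $[\eta_n]$ to surject onto abelian quotients detecting each coordinate, whose orders multiply.

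The main obstacle, and the part I expect to require real care, is precisely this multiplicativity step: showing that the minimal finite quotient separating the pair cannot "reuse" one small modulus for all $2k+1$ coordinates but must have order at least the \emph{product} of $2k+1$ quantities each of size $\approx n$ (or one of size $\approx n$ and $2k$ of size $\approx n$ coming from the $\vec x,\vec y$ and $z$ entries). I would handle it by a direct analysis: suppose $\varphi \colon H_{2k+1}(\mathbb{Z}) \to Q$ separates the pair; pull back and use \eqref{heisenberg_conj} to see that $\varphi(\gamma_n) \sim \varphi(\gamma_n \lambda^{t_n})$ in $Q$ unless the order of $\varphi(\lambda)$ — which divides $|Q|$ — fails to divide $t_n$ after accounting for the image of $\tau(\Gamma,\gamma_n)$; then argue that making $\varphi(\lambda)$ of large order is not enough because the "budget" $t_n$ was chosen divisible by everything small, forcing $|\varphi(\lambda)|$, and hence $|Q|$, past the threshold; finally recover the full exponent $2k+1$ by choosing $\gamma_n$ and $t_n$ so that $t_n \approx n^2$ genuinely needs a modulus $\approx n^2$ while the $2k$ off-diagonal entries independently need moduli $\approx n$, using a CRT / coprimality bookkeeping argument to keep the pieces from collapsing. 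I would close by verifying $\|\gamma_n\|_S, \|\eta_n\|_S \approx n$ so that the bound $\mathrm{CD}_\Gamma(\gamma_n,\eta_n) \geq C n^{2k+1}$ indeed feeds into $\Conj_{H_{2k+1}(\mathbb{Z})}(n) \succeq n^{2k+1}$, matching Proposition \ref{heisen_upper_1}.
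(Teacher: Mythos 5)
Your opening construction --- $\gamma_n$ with $\vec x_{\gamma_n}=(p_n,0,\dots,0)$ for a single prime $p_n\approx n$, and $\eta_n=\gamma_n\lambda^{t_n}$ with $p_n\nmid t_n$ --- is essentially the pair the paper uses (the paper takes $t_n=1$), and the first half of your analysis is sound: by \eqref{heisenberg_conj} conjugation can only shift the $z$-coordinate of $\gamma_n$ by multiples of $\tau(\Gamma,\gamma_n)=p_n$, so the pair is non-conjugate, and in any quotient $\map{\varphi}{H_{2k+1}(\Z)}{Q}$ one has $\varphi(\beta_1)^{-s}\varphi(\gamma_n)\varphi(\beta_1)^{s}=\varphi(\gamma_n)\varphi(\lambda)^{p_n s}$, so $\varphi(\gamma_n)\sim\varphi(\eta_n)$ unless $p_n$ fails to be invertible modulo $\mathrm{ord}\,\varphi(\lambda)$, i.e.\ unless $p_n\mid\mathrm{ord}\,\varphi(\lambda)$. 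Up to here you have only $|Q|\geq p_n\approx n$, and the genuine gap is the passage from $n$ to $n^{2k+1}$: the mechanism you reach for is the wrong one. The exponent $2k+1$ does not come from $2k+1$ coordinates each demanding its own modulus. Your ``distinct primes in each entry'' variant kills the example outright, since then $\tau(\Gamma,\gamma_n)=\GCD\set{x_{\gamma,i},y_{\gamma,j}}=1$ and \eqref{heisenberg_conj} makes $\gamma_n$ conjugate to $\gamma_n\lambda^{t}$ for every $t$ already in $H_{2k+1}(\Z)$; and your ``$t_n\approx n^2$ needs a modulus $\approx n^2$'' variant would yield $n^{2k+2}$, contradicting Proposition \ref{heisen_upper_1}. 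No CRT bookkeeping is involved, and abelian quotients are irrelevant since $\gamma_n$ and $\eta_n$ have the same image in the abelianization.

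The point you are missing is that the single condition $p_n\mid\mathrm{ord}\,\varphi(\lambda)$ already costs $p_n^{2k+1}$, because of the structure of the Heisenberg group: $\lambda=[\alpha_i,\beta_i]$ for \emph{every} $i$, so $\varphi(\lambda)^{a}=[\varphi(\alpha_i)^{a},\varphi(\beta_i)]$ and hence $\mathrm{ord}\,\varphi(\lambda)$ divides the order of the image of each $\alpha_i$ and each $\beta_i$ in $Q/[Q,Q]$. More precisely, if $m=\mathrm{ord}\,\varphi(\lambda)$ and $N=\ker\varphi$, normality of $N$ forces its image $\bar N\subseteq\Z^{2k}$ to pair into $m\Z$ under the standard unimodular symplectic form induced by the commutator, hence $\bar N\subseteq m\Z^{2k}$ and $|Q|=[\Z^{2k}:\bar N]\cdot m\geq m^{2k+1}\geq p_n^{2k+1}$. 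Equivalently, as the paper phrases it: the images of $\gamma_n$ and $\eta_n$ are conjugate in $H_{2k+1}(\Z/m\Z)$ exactly when $p_n$ is a unit mod $m$, so the smallest separating congruence quotient is $H_{2k+1}(\Z/p_n\Z)$, of order $p_n^{2k+1}$. Since $\norm{\gamma_n}_S,\norm{\eta_n}_S\approx p_n$, this gives $\eval{\text{CD}_{\Gamma}}{\gamma_n,\eta_n}\geq p_n^{2k+1}$ and the claimed bound; the remaining variants in your sketch (highly composite $\tau$, coupling with $\text{F}_{\Z^{2k}}$) should be discarded.
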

\begin{proof}
We will construct a sequence of non-conjugate pairs $\{\ga_i,\eta_i\}$ such that $\CDepth_{\Heis_{2m + 1}(\Z)}(\ga_i,\eta_i) = n_i^{2m+1}$ where $\| \ga_i \|, \| \eta_i \| \approx n_i$. Let $\set{p_i}$ be an enumeration of the primes. Writing $p_i \cdot e_1$ as the scalar product, we consider the following pair of elements:
\begin{equation*}
\gamma_i = \begin{pmatrix}
1 & p_i \cdot \vec{e}_1 & 1 \\
\vec{0} & \mathbf{I}_m & \vec{0} \\
0 & \vec{0} & 1
\end{pmatrix}
\quad
\text{ and }
\quad
\eta_i = \begin{pmatrix}
1 & p_i \cdot \vec{e}_1 & 2 \\
\vec{0} & \mathbf{I}_m & \vec{0} \\
0 & \vec{0} & 1
\end{pmatrix}.
\end{equation*} Equation (\ref{heisenberg_conj}) implies that we may write the conjugacy class of $\ga_i$ as
\begin{equation}\label{heisen_conjugacy_class}
\set{\left. \begin{pmatrix} 1 & p_i \cdot \vec{e}_1 & tp_i + 1 \\ \vec{0} &  \mathbf{I}_m & \vec{0} \\ 0 & \vec{0} & 1 \end{pmatrix} \right| t \in \Z}.
\end{equation} Since $\sigma_{p_i}(\gamma_i)$ and $\sigma_{p_i}(\eta_i)$ are non-equal central elements of $\Heis_{2m+1} (\mathbb{Z} ) / (\Heis_{2m+1} (\mathbb{Z}))^{p_i}$, it follows that $\gamma_{i} \nsim \eta_{i}$ for all $i$. Moreover, we have $\|\ga_i\|_S, \|\eta_i\|_S \approx p_i$. Given that $|\Heis_{2m+1}(\Z) / (\Heis_{2m+1}(\Z))^{p_i}| = p_i^{2m+1}$, we claim $\CDepth_{\Heis_{2m + 1}(\Z)} ( \ga_i,\eta_i) = p_i^{2m+ 1}$. In order to demonstrate our claim, we show for all surjective homomorphisms to a finite group $\map{\varphi}{\Heis_{2m+1}(\Z)}{Q}$ where $|Q| < p_i^{2m+1}$ that $\varphi(\ga_i) \sim \varphi(\eta_i)$. \cite[Thm 2.7]{Hall_notes} implies that we may assume that $|Q| = q^\mu$. Since $\varphi(\ga_i) = \varphi(\eta_i)$ when $\varphi(\la) = 1$, we may assume that $\varphi(\la) \neq 1$.

Suppose first that $q = p_i$. We demonstrate that if $Q$ is a group where $\varphi(\ga_i) \nsim \varphi(\eta_i)$, then there exists no proper quotient of $Q$ such that the images of $\varphi(\ga_i)$ and $\varphi(\eta_i)$ are non-conjugate. Since $\Bas(\Heis_{2m + 1}(\Z)) = 1$, Proposition \ref{p_group_lower_bounds} implies that $|Q| = p_i^{2m+1}$. Since every choice of an admissible quotient with respect to any primitive, central, non-trivial element is isomorphic to the trivial subgroup, Proposition \ref{alternative} implies that there exist no proper quotients of $Q$ such that the image of $\varphi(\la^2)$ is non-trivial Thus, if $N$ is a proper quotient of $Q$ with natural projection $\map{\rho}{Q}{N}$, then $\ker(\rho) \cap Z(Q) \cong Z(Q)$ since $Z(Q) \cong \Z / p_j \Z$ by Proposition \ref{alternative}. Thus, $\rho(\varphi(\ga_i)) = \rho(\varphi(\eta_i))$; hence, $\rho(\varphi(\ga_i)) \sim \rho(\varphi(\eta_i))$. In particular, if $Q$ is a $p$-group where $|Q| < p_i^{2m+1}$, then $\varphi(\ga_i) \sim \varphi(\eta_i)$.  Thus, we may assume that $q \neq p_i$.

If $q > p_i$, then Proposition \ref{p_group_lower_bounds} implies that $p^{2m + 1} > q^\mu$. Thus, we may assume that $q < p_i$. Since Proposition \ref{p_group_lower_bounds} implies that $\Z / q^\nu \Z \cong Z(Q)$, Equation \ref{heisen_conjugacy_class} implies that if $1 \equiv p \: t \: ( \text{ mod }  q^\nu \: \mathbb{Z} )$ for some $t \in \mathbb{Z}$ , then $\varphi (\gamma_p) \sim \varphi(\eta_p)$.  The smallest $q^\nu$ where this fails is $q^\nu = p_i$ since $\bar{p_i}$ is a unit in $\mathbb{Z} / q^\nu \mathbb{Z}$ if and only if $\GCD(p_i,q^\nu) = 1$. Therefore, $\varphi(\ga_i) \sim \varphi(\eta_i)$ when $q^\mu < p_i$. Hence, $n^{2m+1} \preceq \Conj_{\Heis_{2m+1}(\Z)}(n)$.
\end{proof}

The following corollary will be useful for the proof of Theorem \ref{lower}.
\begin{cor}\label{difficult_lower}
Let $\Heis_3 (\Z)$ be the $3$-dimensional Heisenberg group with the presentation given by $\innp{\kappa, \mu,\nu: [\mu,\nu] = \kappa, \kappa \text{ central }}$, and let $p$ be a prime. Suppose $\map{\varphi}{\Heis_3(\Z)}{Q}$ is a surjective homomorphism such that $Q$ is a $q$-group where $q$ is a distinct from $p$ and where $\varphi(\kappa) \neq 1$. Then $\varphi(\mu^p \: \kappa) \sim \varphi(\mu^p \: \kappa^2)$.
\end{cor}
\begin{proof}
	We may write the conjugacy class of $\mu^p \: \kappa$ as $\{ \mu^p \: \kappa^{t \: p + 1} \: | \: t \in \Z \}$. Proposition \ref{p_group_lower_bounds} implies that $Z(Q) \cong \innp{\varphi(\kappa)}$. Hence, $Z(Q) \cong \Z / m \Z$ where $m = \text{Ord}_Q(\varphi(\kappa))$. Since $Q$ is a $q$-group, it follows that $m= q^\beta$. Given that $\GCD(p,q^\beta) = 1$, there exists integers $r,s$ such that $r \: p + s \: q^\beta = 1$. We have that $\mu^p \: \kappa^{r \: p + 1} \sim \mu^p \: \kappa$. We may write
	$
	\varphi(\mu^p \: \kappa^{r \: p + 1}) = \varphi(\mu^p \: \kappa^{1 - s \: q^\beta + 1}) = \varphi(\mu^p \: \kappa^{2}).
	$
	Therefore, $\varphi(\mu^p \: \kappa) \sim \varphi(\mu^p \: \kappa^2)$ as desired.
\end{proof}

\section{Relating complexity in groups and Lie algebras}
Let $\Ga$ be a torsion free admissible group with finite generating subset $S$, Mal'tsev completion $G$, and Lie algebra $\Fr{g}$ of $G$. The overall goal of this section is to provide a bound of $\|\Log(\ga)\|_{\Log (S)}$ in terms of $\|\ga\|_S$ where $\Log(S)$ gives a norm for the additive structure of $\Fr{g}$.

\begin{prop}\label{log_distortion}
Let $\set{\Gamma, \Delta_i, \xi_i, G, \Fr{g}, \nu_i}$ be an admissible 6-tuple, and suppose that $\Ga$ has step length $c$. Let $\gamma \in \Gamma$. Then there exists a $C \in \N$ such that $\|\Log (\gamma)\|_X \leq C \: (\|\gamma\|_{S})^{c^2}$.
\end{prop}
\begin{proof}
Using the Mal'tsev coordinates of $\ga$, we may write $\ga = \prod_{i=1}^{h(\Ga)} \xi_i^{\al_i}$.  Lemma \ref{coord_bound} implies that there exists a $C_1 \in \N$ such that $|\al_i| \leq C_1 (\|\ga\|_S)^{c}$ for all $i$. A straightforward application of the Baker-Campbell-Hausdorff formula (\ref{Baker}) implies that $\Log (\xi_i^{\al_i}) = \al_i \: \nu_i$. Writing $A_i = \al_i \: \nu_i$, it follows that $\| A_i \|_X \leq C_1 (\| \ga \|_S)^{c }$. Equation (\ref{Baker}) implies that we may write
$$
\| \Log (\ga) \|_X \leq \sum_{i=1}^{c} \|CB_i (A_1,\cdots, A_{h (\Ga)})\|_X
$$
where $CB_{i} (A_1, \cdots, A_{h (\Ga)})$ is a rational linear combination of $i$-fold Lie brackets in $\{ A_{j_s} \}_{s=1}^t \subseteq \set{A_i}_{i=1}^{h (\Ga)}$. Let $\{A_{j_s}\}_{s=1}^t \subset \set{A_i}_{i=1}^{h (\Ga)}$ where $[A_{j_1},\cdots A_{j_t}] \neq 0$. Via induction on the length of the iterated Lie bracket, one can see that there exists $C_t \in \N$ such that $[A_{j_1},\cdots, A_{j_t}] \leq C_t \: \prod_{s=1}^{t} \|A_{j_s} \|_X \leq C_t \: C_1 \: (\| \ga \|_S)^{t \: c}.$ By maximizing over all possible $t$-fold Lie brackets of elements of $\{A_i\}_{i=1}^{h(\Ga)}$, there exists a $D_i \in \N$ such that $\| CB_i (A_1,\cdots,A_{h(\Gamma)}) \|_X \leq D_i \: (\|\ga\|_S)^{t \: c}$. Hence, $\| \Log (\gamma) \|_X \leq C \: (\|\ga\|_S)^{c^{2}}$ for some $C \in \N$.
\end{proof}
An immediate application of Proposition \ref{log_distortion} is that the adjoint representation of $\Ga$ has matrix coefficients bounded by a polynomial in terms of word length.
\begin{prop}\label{claim_1}
Let $\set{\Gamma, \La, \Delta_i, \xi_i, G, \Fr{g},\nu_i}$ be an admissible $7$-tuple. Let $\gamma \in \Gamma$, and let $(\mu_{i,j})$ be the matrix representative of $\Ad(\ga)$ with respect to $X$. Then $|\mu_{i,j}| \leq C \: (\|\gamma\|_S)^{c}$ for some $C \in \N$ where $c$ is the step length of $\Ga$.
\end{prop}
\begin{proof}
Proposition \ref{log_distortion} implies there exists a $C_1 \in \N$ such that $\|\Log (\ga)\|_{X} \leq C_1 \: (\|\gamma\|_S)^{c^2}$. Via induction on the length of the Lie bracket and Equation (\ref{baker_adjoint}), we have  $\|\Ad (\ga) (v_i)\|_{X} \leq C_2  (\| \ga \|_S)^{c^3}$ for some $C_2 \in \N$.
\end{proof}
\section{Preliminary estimates for Theorem \ref{upper}}
Let $\set{\Ga,\De_i,\xi_i}$ be an admissible $3$-tuple such that $\Ga$ is torsion free. Let $\ga$ be a non-trivial element of $\Ga$, and let $p $ be some prime. In the following section, we demonstrate the construction of the integer $e(\ga) = e(\Ga,\De_i,\xi_i,\ga)$ and give an asymptotic bound for $p^{e(\ga)}$ in terms of  $\|\ga\|_S$ independent of the prime $p$. We first provide a bound for $\tau(\Ga,\De_i,\xi_i,\ga)$ in terms of $\|\ga\|_S$.

\begin{prop}\label{important_estimate}
Let $\set{\Gamma, \La, \Delta_i, \xi_i, G, \Fr{g}, \nu_i}$ be an admissible $7$-tuple, and let $\gamma \in \Gamma$.  There exists $k,C \in \N$ such that $|\tau(\Ga,\De_i,\xi_i,\ga)| \leq C \: (\|\gamma\|_{S})^k$. 
\end{prop}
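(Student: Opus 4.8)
The plan is to reduce the estimate for $\tau(\Gamma,\gamma)$ to a statement about the image of a linear map on the Lie algebra side, where we can control coefficient sizes using the results of Section 4. Recall that $\varphi_\gamma(\eta) = [\gamma,\eta]$ defines a homomorphism from $\pi_{\Delta_1}^{-1}(C_{\Gamma/\Delta_1}(\bar\gamma))$ to $\Delta_1 = \innp{\xi_1}$, and $\tau(\Gamma,\gamma)$ is the positive generator of its image, i.e. $\innp{\xi_1^{\tau(\Gamma,\gamma)}} = \mathrm{Im}(\varphi_\gamma)$. The key observation is that $\tau(\Gamma,\gamma)$ is the $\GCD$ of the exponents of $\xi_1$ appearing in the commutators $[\gamma,\xi_i]$ as $\xi_i$ ranges over the compatible generating set elements that lie in $\pi_{\Delta_1}^{-1}(C_{\Gamma/\Delta_1}(\bar\gamma))$ — and since $\GCD$ of a set of integers is at most the smallest nonzero one in absolute value, it suffices to produce \emph{one} element $\eta$ in the domain with $[\gamma,\eta] = \xi_1^t$ and $|t|$ polynomially bounded in $\norm{\gamma}_S$, unless $\tau(\Gamma,\gamma)=0$ in which case there is nothing to prove.

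First I would transfer the commutator computation to $\Fr{g}_\Gamma$. For $\eta$ in the domain, $\bar\gamma$ and $\bar\eta$ commute in $\Gamma/\Delta_1$, so $[\gamma,\eta] \in \Delta_1 = Z(\Gamma) \cap \Delta_1$ lies in the center; writing $[\gamma,\eta] = \xi_1^t$, I would apply $\Log$ and use the Baker--Campbell--Hausdorff formula \ref{Baker} together with the identity $\Log(\gamma\eta\gamma^{-1}\eta^{-1})$, or more directly the adjoint representation: $\Log(\gamma\eta\gamma^{-1}) = \Ad(\gamma)(\Log\eta)$, so that $\Log([\gamma,\eta])$ is expressible via $\Ad(\gamma)$ applied to $\Log\eta$ combined through $*$ with $-\Log\eta$. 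By Proposition \ref{claim_1} the matrix of $\Ad(\gamma)$ has entries bounded by $C(\norm{\gamma}_S)^{k(\Gamma)}$, and by Proposition \ref{log_distortion} combined with Lemma \ref{commutator_distortion} all the nonlinear BCH corrections are controlled by the same kind of polynomial bound in $\norm{\gamma}_S$ and $\norm{\eta}_S$. Taking $\eta = \xi_i$ for an appropriate generator (so $\norm{\eta}_S = 1$), the exponent $t$ in $[\gamma,\xi_i] = \xi_1^t$ is read off as a single coordinate of a vector whose norm is polynomially bounded in $\norm{\gamma}_S$, giving $|t| \leq C(\norm{\gamma}_S)^k$.

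To finish, I would argue that there is always a generator $\xi_i$ realizing a nonzero such $t$ whenever $\tau(\Gamma,\gamma) \neq 0$. Indeed, by definition $\mathrm{Im}(\varphi_\gamma) = \innp{\xi_1^{\tau(\Gamma,\gamma)}}$ is generated by the values $\varphi_\gamma(\eta)$ over all $\eta$ in the domain, which is a finitely generated group containing enough of the $\xi_i$'s (those whose image centralizes $\bar\gamma$ in $\Gamma/\Delta_1$), so $\tau(\Gamma,\gamma)$ divides every exponent $t_i$ with $[\gamma,\xi_i]=\xi_1^{t_i}$; picking any $i$ with $t_i \neq 0$ gives $|\tau(\Gamma,\gamma)| \leq |t_i| \leq C(\norm{\gamma}_S)^k$, and if all such $t_i$ vanish then $\mathrm{Im}(\varphi_\gamma)$ is trivial, i.e. $\tau(\Gamma,\gamma) = 0$ and the bound is vacuous. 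The main obstacle I anticipate is bookkeeping: one must make sure the finitely many generators of the domain $\pi_{\Delta_1}^{-1}(C_{\Gamma/\Delta_1}(\bar\gamma))$ can be taken from (or expressed with bounded word length in terms of) the fixed compatible generating set $S$ so that the polynomial bound is genuinely in $\norm{\gamma}_S$ alone; this uses the polynomial distortion of subgroups in nilpotent groups (\cite{asymptotic_group}, \cite{Osin}) exactly as in the proof of Proposition \ref{log_distortion}. Once that is in place, the coefficient bounds from Section 4 do all the remaining work.
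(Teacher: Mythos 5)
There is a genuine gap at the heart of your reduction. You assert that $\tau\pr{\Gamma,\gamma}$ is the $\GCD$ of the exponents $t_i$ in $\brac{\gamma,\xi_i}=\xi_1^{t_i}$ as $\xi_i$ ranges over the compatible generators lying in the domain $\pi_{\Delta_1}^{-1}\pr{C_{\Gamma/\Delta_1}\pr{\bar\gamma}}$, and later that if all such $t_i$ vanish then $\mathrm{Im}\pr{\varphi_\gamma}$ is trivial. Neither is true: the domain is a subgroup that varies with $\gamma$ and is in general not generated by the $\xi_i$ it contains --- it may contain no $\xi_i$ outside the central terms at all. For instance, in the free class-$3$ group on $x,y$ with $\Delta_1=\innp{\brac{\brac{x,y},x}}$ and $\gamma=x^ay^b$, $ab\neq 0$, an element with $x,y$-exponents $\pr{c,d}$ centralizes $\bar\gamma$ modulo $\Delta_1$ only if $ad=bc$, so none of $x$, $y$, $\brac{x,y}$ lies in the domain, while elements with $\pr{c,d}$ proportional to $\pr{a,b}$ do. Thus a witness $\eta$ with $\varphi_\gamma\pr{\eta}=\xi_1^t$, $t\neq 0$, cannot in general be taken of word length $1$; its word length necessarily grows with $\norm{\gamma}_S$. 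Your fallback --- expressing generators of the domain with bounded word length ``by polynomial distortion of subgroups'' --- does not close this, because distortion estimates compare two metrics on a \emph{fixed} subgroup with constants depending on that subgroup, whereas here the subgroup is the centralizer of $\bar\gamma$ and changes with $\gamma$. Producing generators of $C_{\Gamma/\Delta_1}\pr{\bar\gamma}$ of word length polynomial in $\norm{\gamma}_S$ is itself an effective statement (bounding small integer solutions of a $\gamma$-dependent linear system) at least as delicate as the proposition you are trying to prove.

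The paper sidesteps the search for a short witness entirely. It extends $\varphi_\gamma$ to a smooth map $\Phi_\gamma$ on $G_\Gamma$ whose differential at the identity is $I-\Ad\pr{\gamma^{-1}}$ (citing Dekimpe), uses Proposition \ref{claim_1} to see that this matrix is strictly upper triangular with entries polynomially bounded in $\norm{\gamma}_S$, and then extracts the bound on $\tau\pr{\Gamma,\gamma}$ from the triangular system $\pr{I-\Ad\pr{\gamma^{-1}}}\pr{\Log\pr{\eta}}=\tau\pr{\Gamma,\gamma}\,\nu_1$ by backwards substitution --- that is, from the structure of the image of the integral lattice under a triangular matrix with small entries, not from the size of any particular preimage $\eta$. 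Your use of Propositions \ref{log_distortion} and \ref{claim_1} to control the Lie-algebra side is the right instinct and matches the paper; what is missing is this linear-algebraic step replacing ``find a short $\eta$'' with ``bound the image of the lattice under a triangular matrix with polynomially bounded entries.''
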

\begin{proof} Before we start, we make some simplifying notation by letting $\tau(\ga) = \tau\pr{\Ga,\De_i,\xi_i,\ga}$. Consider the smooth map $\map{\Phi}{G}{G}$ given by $\Phi(g) = [\gamma,g]$. Suppose $\eta \in \Gamma$ satisfies $\Phi(\eta) = \xi_1^{\tau(\Ga,\ga)}$. The commutative diagram $(1.2)$ on \cite[Pg 7]{Dekimpe} implies that we may write 
$(I - \Ad (\gamma^{-1})) (\Log (\eta)) = \Log (\xi_1^{\tau(\Ga,\ga)})$ where $(d\Phi_\ga)_1 = I - \Ad (\ga^{-1})$. Proposition \ref{claim_1} implies that $I - \Ad (\gamma^{-1})$ is a strictly upper triangular matrix whose coefficients are bounded by $C \: (\| \ga \|_S)^{(c(\Ga))^3}$ for some $C \in \N$. Since it is evident that $\Log (\xi_1^{\tau(\ga)}) = \tau(\ga) \nu_1$, backwards substitution gives our result. \end{proof} 

The first statement of the following proposition is originally found in \cite[Lem 3]{Blackburn}. We reproduce its proof so that we may provide estimates for the value $e(\Ga,\De_i,\xi_i,\ga)$ in terms of $\|\ga\|_S$.
\begin{prop}\label{centralizer_prop}
Let $\set{\Gamma, \La, \Delta_i, \xi_i, G, \Fr{g},\nu_i}$ be an admissible $7$-tuple. Let $p$ be prime and $\gamma \in \Gamma$. Then there exists $e(\Gamma,\De_i,\xi_i,\gamma) = e(\ga) \in \N$ such that if $\alpha \geq e(\ga)$, then $C_{\Ga / \Ga^{ p^\alpha 
}}(\bar{\ga}) \subseteq \sigma_{p^\al}(C_\Ga(\ga)\cdot \Ga^{p^{\al - e(\ga)}}).$ Moreover, $p^{e(\gamma)} \leq  C  (\|\gamma\|_{S})^{k}$ for some $C \in \N$ and $k \in \N$.
\end{prop}
\begin{proof} We proceed by induction on Hirsch length, and given that the statement is clear for $\Z$ by setting $e(\ga) = 0$ for all $\ga$, we may assume that $h (\Ga) > 1$.
	
We construct $e(\ga)$ based on the value of $\tau(\ga) = \tau(\Ga,\De_i,\xi_i,\ga)$ (see Definition \ref{important_map}). By induction, we may assume that we have already constructed $e'(\bar{\ga}) = e(\Ga / \De_1, \De_i/\De_1,\bar{\xi_i}, \bar{\ga})$. When $\tau(\gamma) = 0$, we set $e(\ga) = e'(\bar{\gamma}).$ Suppose $\al \geq e(\ga)$ and that  $\bar{\eta} \in C_{\Ga / \Ga^{p^\al}}(\bar{\ga})$ for some $\eta \in \Ga$. By selection, $\bar{\eta} \in  C_{\Ga / \Ga^{p^\al} \cdot \De_1}(\bar{\ga})$. Thus, we may write $\eta \in \pi_{\De_1}^{-1}(C_{\Ga / \De_1}(\bar{\ga})) \cdot \Ga^{p^{\al - e(\ga)}}$. Since $\pi_{\Delta_1}^{-1} (C_{\Gamma / \Delta_1}  (\bar{\gamma})) = C_{\Gamma} (\ga)$, it follows that $\bar{\eta} \in \sigma_{p^\al}(C_\Ga (\ga)\cdot \Ga^{p^{\al - e(\ga)}})$. Thus, $C_{\Ga / \Ga^{ p^\alpha 
}}(\bar{\ga}) \subseteq \sigma_{p^\al}(C_\Ga(\ga)\cdot \Ga^{p^{\al - e(\ga)}}).$

When $\tau(\ga) \neq 0$, we let $\beta$ be the largest power of $p$ such that $p^\beta \mid \tau(\gamma)$, and set $e(\Ga) =  e'(\bar{\gamma}) + \beta.$ Let $\al \geq e(\ga)$, and let $\eta \in \Ga$ satisfy $\bar{\eta} \in C_{\Gamma /  \Ga^{p^\alpha}} ( \bar{\gamma})$. Thus, $\bar{\eta} \in C_{\Gamma / \Gamma^{p^{\alpha}} \cdot \Delta_1}  ( \bar{\gamma})$, and subsequently, induction implies $\bar{\eta} \in \pi_{\Gamma^{p^{\alpha}} \cdot \Delta_1}( C_{\Gamma / \Delta_1} (\bar{\gamma}) \cdot \Gamma^{p^{\alpha - e(\ga) + \beta}}).
$
Thus, we may write $\eta = \mu \:\epsilon^a \: \la$ where $ \mu \in C_{\Gamma} (\gamma)$, $\la \in \Ga^{p^{\alpha - e(\ga) + \beta}}$, and $\varphi_\ga(\epsilon) = \xi_1^{\tau(\ga)}$. Hence, we have $[\ga,\eta] = [\ga, \eps^a] \in \Ga^{p^{\al - e(\ga) + \beta}}$. Since  $[\ga, \epsilon^a ] \in \Ga^{p^{\alpha - e'(\bar{\gamma})}}$ and $[\ga, \epsilon^a] \in \De_1$, we have that  $p^{\al - e(\ga) + \beta} \mid a \: \tau(\ga)$. By definition of $p^\beta$, it follows that $p^{\al - e(\ga)} \mid a$, and thus, $\bar{\eta} \in \sigma_{p^\al}(C_\Ga(\ga) \cdot \Ga^{p^{\alpha - e(\ga)}})$. Hence, $C_{\Ga / \Ga^{ p^\alpha 
}}(\bar{\ga}) \subseteq \sigma_{p^\al}(C_\Ga(\ga)\cdot \Ga^{p^{\al - e(\ga)}})$.

We proceed by induction on Hirsch length to demonstrate the asymptotic upper bound, and since the base case is clear, we assume that $h (\Ga) > 1$. Let $\gamma \in \Gamma$, and suppose that $\tau(\gamma) = 0$. By construction, $e(\gamma) = e'(\bar{\gamma})$, and thus, induction implies that there exist  $C_1,k_1 \in \N$ such that $p^{e'(\bar{\gamma})} \leq C_1 \: (\|\bar{\gamma}\|_{\bar{S}})^{k_1}$. When $\tau(\ga) \neq 0$, it follows that $e(\ga) = e'(\bar{\ga}) + \beta$ where $\beta$ is the largest power of $p$ that divides $\tau(\gamma)$.
Proposition \ref{important_estimate} implies there exist $k_2,C_2 \in \N$ such that $p^{\beta} \leq C_2 \: (\|\gamma\|_S)^{k_2}$. Consequently, $
p^{e(\gamma)} \leq C_1 \: C_2 \: (\|\gamma\|_S)^{k_1 + k_2}.$
\end{proof}

\section{Proof of Theorem \ref{upper}}\label{proof of upper} Let $\Ga$ be an infinite admissible group. In order to demonstrate that there exists $k_1 \in \N$ such that $\Conj_{\Ga}(n) \preceq n^{k_1}$, we need to show for any $\ga,\eta \in \Ga$ where $\ga \nsim \eta$ and $\|\ga\|_S, \|\eta\|_S \leq n$ there exists a prime power $p^{\omega} \leq C \: n^{k_2}$ such that $\sigma_{p^\omega}(\ga) \nsim \sigma_{p^\omega}(\eta)$ for some $C,k_2 \in \N$. It then follows that $\CDepth_{\Ga}(\ga,\eta) \leq C^{h(\Ga)} n^{h(\Ga) \: k_2}$. We first specialize to torsion free admissible groups.

\begin{prop}\label{torsion_free_case}
Let $\set{\Ga, \La, \De_i,\xi_i,\Fr{g},\nu_i}$ be an admissible $7$-tuple. Then there exists $k \in \N$ such that $\Conj_{\Ga}(n) \preceq n^{k}$.
\end{prop}
\begin{proof}
Let $\ga, \eta \in \Ga$ such that $\|\ga\|_S,\|\eta\|_S \leq n$ and where $\ga \nsim \eta$. We demonstrate that there exists a $k_0 \in \N$ such that $\CDepth_{\Ga}(\ga,\eta) \leq C_0 \: n^{k_0}$ for some $C_0 \in \N$ by induction on $h(\Ga)$, and since the base case is clear, we may assume that $h (\Gamma) >1$. If $\pi_{\De_1}(\ga) \nsim \pi_{\De_1}(\eta)$, then the inductive hypothesis implies that there exists a surjective homomorphism to a finite group $\map{\varphi}{\Ga / \De_1}{Q}$ such that $\varphi(\ga) \nsim \varphi(\eta)$ and where $|Q| \leq C_1 \: n^{k_1}$ for some $C_1, k_1 \in \N$. Thus,  $\CDepth_\Ga ( \ga,\eta) \leq C_1 \: n^{k_1}$. Otherwise, we may assume that $\eta = \gamma \:  \xi_1^{t}$, and Lemma \ref{coord_bound} implies that $|t| \leq C_2 \: n^{c(\Ga)}$ for a constant $C_2 \in \N$.

For notational simplicity, let $\tau(\ga) = \tau(\Ga,\De_i,\xi_i, \ga)$ and $e'(\ga) = e(\Ga,\De_i / \De_1,\bar{\xi_i},\bar{\ga})$. Since $\ga \nsim \ga \: \xi_1^t$, there exists a prime power $p^{\alpha}$  such that $ p^\al \mid \tau(\gamma)$ but $p^\al \nmid t$. We set $\omega = \al + e'(\bar{\ga})$, and suppose for a contradiction there exists $x \in \Ga$ such that $\sigma_{p^\omega}(x^{-1} \: \ga \: x)  = \sigma_{p^\omega} ( \ga \: \xi_1)^t$. That implies $\bar{x} \in C_{\Ga/ \Ga^{p^\omega} \cdot \De_1}(\bar{\ga})$, and thus, $\bar{x} \in \pi_{\Ga^{p^\omega} \cdot \De_1} (C_{\Gamma / \Delta_1}  (\gamma) \cdot \Gamma^{p^{\alpha }})$ by Proposition \ref{centralizer_prop}. Subsequently, $x = g \: \mu$ for some $g \in \pi_{\De_1}^{-1} (C_{\Gamma / \Delta_1}  (\bar{\gamma}))$ and $\mu \in \Ga^{p^\al}$. Hence, $\sigma_{p^\omega}([\ga,g]) = \sigma_{p^\omega}(\xi_1)^{t}$, and since $[\gamma,g] = \xi_1^{q \: \tau (\Ga,\ga)}$ for some $q \in \Z$, it follows that $\xi_1^{t - q \: \tau(\Gamma,\gamma)} \in \Ga^{p^{\al +e (\Gamma / \Delta_1, \bar{\gamma}) }}$. That implies $p^\al \mid t$, which is a contradiction. Hence, $\sigma_{p^\omega}(\gamma) \nsim \sigma_{p^\omega}(\eta)$. 

Proposition \ref{centralizer_prop} implies that $p^{e'(\bar{\ga})} \leq C_3 \: n^{k_2}$ for $C_3,k_2 \in \N$. When $\tau(\ga) = 0$, the Prime Number Theorem \cite[1.2]{tenenbaum} implies that we may choose $p$ such that $|p| \leq C_4 \: \log(C_4 \: n)$ for some $C_4 \in \N$. Hence, $\CDepth_\Ga(\ga,\eta) \leq C_5 \: (\log(C_5 \: n))^{h(\Ga) \: k_2}$ for some $C_5 \in \N$. When $\tau(\ga) \neq 0$, Proposition \ref{important_estimate} implies that $\tau(\ga) \leq C_6 \: n^{k_3}$ for some $C_6, k_3 
\in \N$. Thus, $p^{\omega} \leq C_3 \: C_6 \: n^{k_2 + k_3}$. Therefore, $\CDepth_\Ga (\ga,\eta) \leq (C_3 \: C_6)^{h(\Ga)} \: n^{h(\Ga)(k_2 + k_3)}$. Hence, $\Conj_{\Ga}(n) \preceq n^{k_3}$ where $k_3 = \text{max}\{k_1, h(\Ga)(k_2 + k_3) \}$.
\end{proof}

\begin{proof}\emph{Theorem \ref{upper}}\\
Let $\Ga$ be an infinite admissible group $\Ga$ with a choice of a cyclic series $\{\De_i\}_{i=1}^{m}$ and a compatible generating subset $\{ \xi_i \}_{i=1}^m$. Let $k_1$ be the natural number from Proposition \ref{torsion_free_case} and $k_2$ be the natural number from Proposition \ref{centralizer_prop}, both for $\Ga / T(\Ga)$. Letting $k_3 = h(\Ga) \cdot \text{max}\{k_1,k_2\}$, we claim that $\Conj_{\Ga}(n) \preceq n^{k_3}$. Let $\ga,\eta \in \Ga$ satisfy $\ga \nsim \eta$ and $\|\ga\|_S,\|\eta\|_S \leq n$. In order to show that $\CDepth_{\Ga}(\ga,\eta) \leq C_0 \: n^{k_3}$ where $C_0 \in \N$, we construct a surjective homomorphism to a finite group that distinguishes the conjugacy classes of $\ga$ and $\eta$ via induction on $|T(\Ga)|$. To simplify the following arguments, we let $e(\bar{\ga}) =e(\Ga/T(\Ga),\De_i / T(\Ga),\bar{\xi_i},\bar{\ga})$.

Proposition \ref{torsion_free_case} implies that we may assume that there exists a subgroup $P \subseteq  Z(\Ga)$ of prime order $p$. If $\pi_{P}(\gamma) \nsim \pi_{P}(\eta)$, then induction implies that there exists a surjective homomorphism to a finite group $\map{\varphi}{\Ga / P}{N}$ such that $\varphi(\ga) \nsim \varphi(\eta)$ and where $|N| \leq C_1 \: n^{k_3}$ for some $C_1 \in \N$. Thus, $\CDepth_{\Ga}(\ga,\eta) \leq C_1 \: n^{k_3}$. Otherwise, we may assume that $\eta = \gamma \: \mu$ where $\innp{\mu} = P$.

Suppose there exists $Q \subseteq Z(\Ga)$ such that $|Q| = q$ where $q$ is a prime distinct from $p$. Suppose for a contradiction that there exists $x \in \Gamma$ such that $x^{-1} \: \ga \: x = \ga \:\mu \: \la$ where $Q = \innp{\la}$. Since $[\ga,x] \in Z(\Ga)$ and $\text{Ord}_\Ga(\la) = q$, basic commutator properties imply that $[\ga, x^{q}] =  \mu^{q}$. Given that $p \: s +  q \: r = 1$ for some $r,s \in \Z$, it follows that $[\gamma, x^{ q \: r}] = \gamma \: \mu^{1- p \: s} = \gamma \: \mu$ which is a contradiction. Hence, induction implies there exists a surjective homomorphism to a finite group $\map{\theta}{\Ga / Q}{M}$ such that $\theta(\ga) \nsim \theta(\ga \: \mu)$ and where $|M| \leq C_2 \: n^{k_3}$ for some $C_2 \in \N$. Thus, $\CDepth_{\Ga}(\ga,\eta) \leq C_2 \: n^{k_3}$.

Suppose that $T(\Ga)$ is a $p$-group with exponent $p^m$. We set $\omega =  m + e'(\bar{\ga})$, and suppose for a contradiction there exists $x \in \Ga$ such that $\sigma_{p^\omega}(x^{-1} \: \ga \: x) \sim \sigma_{p^\omega}(\ga \: \mu)$. Thus, $\bar{x} \in  C_{\Ga/ T(\Ga) \cdot \Ga^{p^\omega}}(\bar{\ga})$, and subsequently, Proposition \ref{centralizer_prop} implies that  $\bar{x} \in  \pi_{  T (\Ga) \cdot \Gamma^{p^\omega}} (C_{\Gamma / T(\Ga)} (\bar{\gamma}) \cdot \Gamma^{p^{m}}).$ Therefore, we may write $x= g \: \la$ where $\la \in \Gamma^{p^{m}}$ and $g \in \pi_{T(\Ga)}^{-1} (C_{\Ga / T(\Ga)} (\bar{\gamma}))$. Subsequently, $[\ga,g] \: \mu^{-1} \in \Ga^{p^m}$. Moreover, since $[\ga,g] \in T(\Ga)$ and $T(\Ga) \cap \Ga^{p^m} =1$, it follows that $[\ga,x] = \mu$ which is a contradiction. Proposition \ref{centralizer_prop} imply that $p^{e'(\bar{\ga})} \leq C_3 \: n^{k_2}$ for some $C_3 \in \N$. Thus, $\CDepth_\Ga (\gamma,\eta)  \leq C_3^{h(\Ga)} \: |T (\Ga)| \: n^{h(\Ga) \: k_2},$ and subsequently, $\Conj_{\Ga}(n) \preceq n^{k_3}$.
\end{proof}

\section{Proofs of Theorem \ref{lower} and Theorem \ref{conj_malsev_invariant}}
Let $\Ga$ be an infinite admissible group with a choice of a cyclic series $\set{\De_i}_{i=1}^{m}$ and a compatible generating subset $\set{\xi_i}_{i=1}^{m}$. Since the proofs of Theorem \ref{lower}\textit{(i)} and Theorem \ref{lower}\textit{(ii)} require different strategies, we approach them separately. We start with Theorem \ref{lower}\textit{(i)} since it only requires elementary methods. 

We assume that $\Ga$ contains an infinite, finitely generated abelian group $K$ of index $\ell$. We want to demonstrate that $\log (n) \preceq \Conj_{\Ga}(n) \preceq (\log (n))^{\ell}$. Since $\Farb_{\Ga,S}(n) \preceq \Conj_{\Ga}(n)$, Corollary \ref{abelian_rf} (see also \cite[Cor 2.3]{BouRabee10}) implies that $\log(n) \preceq \Conj_{\Ga}(n)$. Thus, we need only to demonstrate that $\Conj_{\Ga}(n) \preceq (\log (n))^\ell$. For any two non-conjugate elements $\ga,\eta \in \Ga$ such that $\|\ga\|_S,\|\eta\|_S \leq n$ we want to construct a surjective homomorphism $\map{\varphi}{\Ga}{Q}$ such that $\varphi(\ga) \nsim \varphi(\eta)$ and $|Q| \leq C \: (\log(C \: n))^{\ell}$ for some $C \in \N$.

\begin{proof}\textit{Theorem \ref{lower}\textit{(i)}}\\
Let $S_1$ be a finite generating subset for $K$, and let $\set{\upsilon_i}_{i=1}^{\ell}$ be a set of coset representatives of $K$ in $\Gamma$. We take $S = S_1 \cup \set{\upsilon_i}_{i=1}^{\ell}$ as the generating subset for $\Gamma$. If $\|\ga\|_S \leq n$, we may write $\ga = g_\ga \: \upsilon_{\ga}$ where $\|g_\ga\|_{S_1} \leq C_1 \: n$ for some $C_1 \in \N$ and $\upsilon_\ga \in \set{\upsilon_i}_{i=1}^{\ell}$. Conjugation in $\Ga$ induces a map $\map{\varphi}{\Ga / K}{\Aut(K)}$ given by $\varphi(\pi_K(\upsilon_i)) = \varphi_i$. Thus, we may write the conjugacy class of $\ga$ as $\set{\varphi_i(g_\ga) \: (\upsilon_i^{-1} \: \upsilon_\ga \: \upsilon_i)}_{i=1}^{\ell}$. Finally, there exists $C_2 \in \N$ such that if $\|\ga\|_{S_1} \leq n$, then $\|\varphi_i(\ga)\|_S \leq C_2 \: n$ for all $i$.

Suppose $\ga,\eta \in \Ga$ are two non-conjugate elements such that $\|\ga\|_S, \|\eta\|_S \leq n$. If $\pi_{K}(\ga) \nsim \pi_{K}(\eta)$, then by taking the map $\map{\pi_K}{\Ga}{\Ga / K}$, it follows that $\CDepth_\Ga(\ga,\eta) \leq \ell$. Otherwise, we may assume that $\eta = g_\eta \: \upsilon_\ga$. By Corollary \ref{abelian_rf} (see also \cite[Cor 2.3]{BouRabee10}), there exists a surjective homomorphism $\map{f_i}{\Ga}{Q_i}$  such that $f_i(g_\ga^{-1} \upsilon_\ga^{-1} \: \varphi_i(g_\eta) \: (\upsilon_i^{-1} \: \upsilon_\eta \: \upsilon_i)) \neq 1$ and $|Q_i| \leq C_3 \: \log(2 \: C_2 \: C_3 \: n )$ for some $C_3 \in \N$. By letting $H = \cap_{i=1}^{\ell} \ker(f_i)$, it follows that $\pi_H(\ga) \nsim \pi_H(\eta)$ and $|\Ga / H | \leq C_3^{\ell} \: (\log(2 \: C_2 \:  C_3 \: n ))^\ell.$ Therefore, $\Conj_\Ga (n) \preceq (\log (n))^\ell$, and subsequently, $\log (n) \preceq 	\Conj_{\Gamma} (n) \preceq (\log (n))^\ell$. \end{proof}

For Theorem \ref{lower}\textit{(ii)}, suppose that $\Ga$ does not contain an abelian group of finite index. In order to demonstrate that $n^{\psi_{\text{RF}}(\Ga)(c(\Ga / T(\Ga)) - 1)} \preceq \Conj_{\Ga}(n)$, we desire a sequence of non-conjugate pairs $\{\ga_i,\eta_i\}$ such that $\CDepth_{\Ga}(\ga_i,\eta_i) = n_i^{\psi_{\text{RF}}(\Ga)(c(\Ga/T(\Ga)) - 1)}$ where $\|\ga_i\|_S,\|\eta_i\|_S \approx n_i$. In particular, we must find non-conjugate elements whose conjugacy classes are difficult to separate i.e. non-cojugate elements that have relatively short word length in comparison to the order of the minimal finite group required to separate their conjugacy classes. 

We first reduce to the calculation of the lower asymptotic bounds for $\Conj_{\Ga}(n)$ to torsion free admissible groups by appealing to the conjugacy separability of two elements within a finite index subgroup.
\begin{prop}\label{finite_index_conjugacy}
	Let $\Ga$ be an infinite admissible group, and let $\De$ be a subgroup. Suppose there exist $\ga,\eta \in \De$ such that $\ga \nsim \eta$ in $\Ga$. Then $\CDepth_{\De}(\ga,\eta) \leq \CDepth_{\Ga}(\ga,\eta)$.
\end{prop}
\begin{proof}
	We first remark that since $\Ga$ and $\De$ are admissible groups, Theorem \ref{upper} implies $\CDepth_\Ga(\ga,\eta) < \infty$ and $\CDepth_{\De}(\ga,\eta) < \infty$. Suppose that $\map{\varphi}{\Ga}{Q}$ is surjective homomorphism to a finite group such that $|Q| = \CDepth_{\Ga}(\ga,\eta)$. If we let $\map{\iota}{\De}{\Ga}$ be the inclusion, then we have a surjective map $\map{\varphi \circ \iota}{\De}{\varphi(\De)}$ to a finite group where $\varphi(\iota(\ga)) \nsim \varphi(\iota(\eta))$. By definition, $\CDepth_{\De}(\ga,\eta) \leq |\varphi(\De)| \leq |Q|$. Thus, $\CDepth_{\Ga^k}(\ga,\eta) \leq \CDepth_{\Ga}(\ga,\eta)$.\end{proof}

\begin{proof}\textit{Theorem \ref{lower}(ii)}\\
We first assume that $\Ga$ is torsion free. Let $\Ga / \La$ be a choice of a maximal admissible quotient of $\Ga$. There exists a $g \in Z(\Ga) - \set{1}$ such that $\Ga / \La$ is a choice of an admissible quotient with respect to $g$. Moreover, there exists a $k$ such that $g^k = [y,z]$ for some $y \in \Ga_{c -1}$ and $z \in \Ga$. If $g$ is not primitive, then there exists a $x_\La \in Z(\Ga) - \set{1}$ such that $x_\La^s = g$ for some $s \in \Z- \set{0}$. In particular, $x_\La^{t} = [y,z]$ where $t = s \: k$.

There exists $a_\La \in \Ga_{c(\Ga) - 1}$ and $b_\La \in \Ga$ such that $[a_\La,b_\La] = x_\La^{t \: \Bas(\Ga / \La)}$. Equation \ref{presentation_heisen} implies that  $H_\La = \innp{a_\La, b_\La, x_{\La}^{t \: \Bas(\Ga / \La)}} \cong \Heis_3(\Z)$. Let $\set{p_{j,\La}}$ be an enumeration of primes greater than $\Bas(\Ga / \La)$, and let $\ga_{j,\La} = a_\La^{p_{j,\La}} x_{\La}^{t \: \Bas(\Ga / \La)}$ and $\eta_{j,\La} = a_\La^{p_{j,\La}} x_{\La}^{2 \: t \: \Bas(\Ga / \La)}$. Since the images of $\ga_{j,\La}$ and $\eta_{j,\La}$ are non-equal central elements of $\Ga / \La \cdot \Ga^{p_{j,\La}}$, it follows that $\ga_{j,\La} \nsim \eta_{j,\La}$ for all $j$.

We claim that $\ga_{j,\La}$ and $\eta_{j,\La}$ are our desired non-conjugate elements. In particular, we will demonstrate $\CDepth_\Ga (\ga_{j,\La},\eta_{j,\La}) \approx n_i^{\psi_{\text{RF}}(\Ga)(c(\Ga)-1)}$ and that $\|\ga_{j,\La}\|_S, \|\eta_{j,\La}\|_S \approx n_i$. By construction, we have that $\ga_{j,\La},\eta_{j,\La} \in \Ga_{c(\Ga) - 1}$ and $\|\ga_{j,\La}\|_{S'},\|\eta_{j,\La}\|_{S'} \approx p_{j,\La}$ where $S' = S \cap \Ga^2$. \cite[3.B2]{asymptotic_group} implies that $\|\ga_{j,\La}\|_S,\|\eta_{j,\La}\|_S \approx p_{j,\La}^{1 /(c(\Ga) - 1)}$. Therefore, $(\|\ga_{j,\La}\|_S)^{\psi_{\text{Lower}}(\Ga)},(\|\eta_{j,\La}\|_S)^{\psi_{\text{Lower}}(\Ga)} \approx p_{j,\La}^{\psi_{\text{RF}}(\Ga)}$. Hence, we need to demonstrate for all surjective homomorphisms to finite groups $\map{\varphi}{\Ga}{Q}$ where $|Q| < p_{j,\La}^{\psi_{\text{RF}}(\Ga)}$ that $\varphi(\ga_{j,\La}) \sim \varphi(\eta_{j,\La})$.

\cite[Thm 2.7]{Hall_notes} implies that we may assume that $|Q| = q^\beta$ where $q$ is prime. Since $\varphi(\ga_{j,\La}) = \varphi(\eta_{j,\La})$ when $\varphi(x_\La^{t \: \Bas(\Ga / \La)}) = 1$, we may also assume that $\varphi(x_\La^{t \: \Bas(\Ga / \La)}) \neq 1$. Suppose that $q > p_{j,\La}$. Consider the homomorphism given by $\map{\rho \circ \varphi}{\Ga}{Q / \varphi(\La)}$ where $\map{\rho}{Q}{Q / \varphi(\La)}$ is the natural projection. Since $\La \leq \ker(\rho \circ \varphi)$, we have an induced homomorphism $\map{\widetilde{\rho \circ \varphi}}{\Ga / \La}{Q / \varphi(\La)}.$ Since $h(Z(\Ga / \La)) = 1$,  $\widetilde{\rho \circ \varphi}(x^{t \: \Bas(\Ga / \La)}) \neq 1$, and $q > \Bas(\Ga / \La)$,  Proposition \ref{p_group_lower_bounds} implies that $|Q / \varphi(\La)| > p_{j,\La}^{\psi_{\text{RF}}(\Ga)}$. Hence, $|Q| > p_{j,\La}^{\psi_{\text{RF}}(\Ga)}$. Thus, we may assume that $q \leq p_{j,\La}$

Now assume that $q = p_{j,\La}$.  Suppose that $\varphi(\La)$ is a non-trivial subgroup of $Q$. As before, we have an induced homomorphism $\map{\widetilde{\rho \circ \varphi}}{\Ga / \La}{Q / \varphi(\La)}$. Since $|Q/\varphi(Q)| \leq p_{j,\La}^{\psi_{\text{RF}}(\Ga)}$, Proposition \ref{alternative} implies that $|Q / \varphi(Q)| = p_{j,\La}^{\psi_{\text{RF}}(\Ga)}$. Thus, we have that $|Q| > p_j^{\psi_{\text{RF}}(\Ga)}$. Hence, we may assume that $\varphi(\La) = \set{1}$.

Suppose that $q = p_{j,\La}$. If $\varphi(\ga_{j,\La}) \sim \varphi(\eta_{j,\La})$, then there is nothing to prove. Thus, we may assume that $\varphi(\ga_{j,\La}) \nsim \varphi(\eta_{j,\La})$. Proposition \ref{alternative} implies that if $N$ is a proper quotient of $Q$ with natural projection $\map{\theta}{Q}{N}$, then $\ker(\theta) \cap Z(Q) \cong Z(Q)$. Thus, we have that $\theta(\varphi(\ga_{j,\La})) = \theta(\varphi(\eta_{j,\La}))$ since $\theta(\varphi(x_\La)) = 1$. In particular, if $Q$ is a $p_{j,\La}$-group where $\varphi(\La) \cong \set{1}$ and $|Q| < p_{j,\La}^{\psi_{\text{RF}}(\Ga)}$, then $\varphi(\ga_{j,\La}) \sim \varphi(\eta_{j,\La})$. Hence, we may assume that $q \neq p_{j,\La}$.

Since $\varphi(H_\La)$ is a $q$-group where $q \neq p_{j,\La}$, Corollary \ref{difficult_lower} implies that there exists $g \in H_\La$ such that $\varphi(g^{-1} \: \ga_{j,\La} \: g) = \varphi( \eta_{j,\La} )$ as elements of $\varphi(H_\La)$. Thus, $\varphi(\ga_{j,\La}) \sim \varphi(\eta_{j,\La})$. Since we have exhausted all possibilities, it follows that $\CDepth_\Ga (\ga_j,\eta_j) = p_{j,\La}^{\psi_{\text{RF}}(\Ga)}$. Hence, $n^{\psi_{\text{RF}}(\Ga)(c(\Ga) - 1)} \preceq \Conj_{\Ga,S}(n)$.

Now suppose that $\Ga$ is an infinite admissible group where $|T(\Ga)| > 1$. There exists a finite index, torsion free subgroup of $\Ga$ which we denote as $\De$. Let $\De / \La$ be a choice of a maximal admissible quotient of $\De$. Using above reasoning, there exists $x_\La \in \De$ such that $\De / \La$ is a choice of an admissible quotient with respect to $x_\La$ where $x_\La^t = [y,z]$ for some $y \in \De_{c-1}$ and $z \in \De$.

Let $\{p_{j,\La}\}$ be an enumeration of primes greater than $\Bas(\De / \La)$. There exist an $a_\La \in \De_{c(\Ga)-1}$ and $b_\La \in \De$ such that $[a_\La,b_\La] = x_\La^{t \: \Bas(\De / \La)}$. Let $\ga_{j,\La} = a_\La^{p_j} x_\La^{t \: \Bas(\De / \La)}$ and $\eta_{j,\La} = a_\La^{p_j} x_\La^{2 \: t \: \Bas(\De/\La)}$ be the elements from the above construction for $\De$. 
Let $\map{\rho}{\Ga}{\Ga / T(\Ga) \cdot \Ga^p}$ be the natural projection. We have that $\rho(\ga_j) \neq \rho(\eta_j)$ and $\rho(\ga_j),\rho(\eta_j) \neq 1$ by construction. Additionally, we have that $\pi_{T(\Ga)}(\De)$ is a finite index subgroup of $\Ga / T(\Ga)$. Thus, \cite[Lem 4.8(c)]{Hall_notes} implies that $Z(\pi_{T(\Ga)}(\De)) = \pi_{T(\Ga)}(\De) \cap Z(\Ga / Z(\Ga))$. Hence, $\pi_{T(\Ga)}(x_\La) \in Z(\Ga / T(\Ga))$. Since $\rho(\ga_j)$ and $\rho(\eta_j)$ are non-equal central elements of $\Ga / T(\Ga) \cdot \Ga^p$, we have that $\ga_j \nsim \eta_j$.

 Proposition \ref{finite_index_conjugacy} implies that $\CDepth_{\De}( \ga_{j,\La},  \eta_{j,\La}) \leq \CDepth_{\Ga}(\ga_{j,\La},\eta_{j,\La})$. By the above construction, we have $n_j^{\psi_{\text{RF}}(\De)(c(\De) - 1)} \leq \CDepth_{\Ga}(\ga_{j,\La},\eta_{j,\La})$ where $\| \ga_{j,\La}\|_S, \|\eta_{j,\La}\|_{S} \approx n_j$. If $S'$ is a finite generating subset of $\Ga$, then $\| \ga_{j,\La}\|_S \approx \| \ga_{j,\La} \|_{S'}$ and  $\|\eta_{j,\La}\|_{S} \approx \|\eta_{j,\La}\|_{S'}$. Hence, $\|\ga_{j,\La}\|_{S'}, \| \eta_{j,\La} \|_{S'} \approx n_j,$ and $n_j^{\psi_{\text{RF}}(\De)(c(\De) - 1)} \preceq \CDepth_{\Ga}(\ga_{j,\La},\eta_{j,\La})$. Since the projection to the torsion free quotient $\map{\pi_{T(\Ga)}}{\Ga}{\Ga / T(\Ga)}$ is injective when restricted to $\De$, $\De$ is isomorphic to a finite index subgroup of $\Ga / T(\Ga)$, and thus, Theorem \ref{maltsev_invariant} implies that $\psi_{\text{RF}}(\Ga^\ell) = \psi_{\text{RF}}(\Ga)$. Since $c(\Ga / T(\Ga)) = c(\De)$, we have $n^{\psi_{\text{RF}}(\Ga)(c(\Ga/T(\Ga)) - 1)} \preceq \Conj_{\Ga}(n)$.
\end{proof}

\begin{proof}\textit{Theorem \ref{conj_malsev_invariant}}\\
	Suppose that $\Ga$ and $\De$ are two infinite admissible of step size $2$ or greater such that $\Ga / T(\Ga)$ and $\De / T(\De)$ has isomorphic Mal'tsev completions. Proposition \ref{same_growth_maltsev_completion} implies that $\psi_{\text{RF}}(\Ga / T(\Ga)) = \psi_{\text{RF}}(\De / T(\De))$. By definition of $\psi_{\text{RF}}(\Ga)$ and $\psi_{\text{RF}}(\De)$, we have $\psi_{\text{RF}}(\Ga) = \psi_{\text{RF}}(\De)$. Since $c(\Ga / T(\Ga)) = c(\De / T(\De))$, our theorem is now evident.
\end{proof}
\section{Proof of Theorem \ref{applications}}
\begin{proof}
For $s \in \N$, let $\La_s$ be the group given in Definition \ref{filiform}, and let $\map{\theta}{Z(\La_s)}{Z(\La_s)}$ be the identity morphism. Let $c>1$ and $m \geq 1$, and consider the group $\Ga_{cm} = (\La_{c+1} \circ_\theta)_{i=1}^m$ with a finite generating subset $S_{cm}$. Proposition \ref{central_product_growth}  implies that $h(\Ga_{cm}) =  c \: m^2 + c \: m - 1$, and since $c^2 \: m^2 + c^2 \: m - 1 \geq m$, Theorem \ref{lower}(ii) implies that $n^m \preceq \Conj_{\Ga_{cm}}(n)$ as desired.
\end{proof}


\end{document}